\theoremstyle{plain}
 \newtheorem{theorem}{Theorem}[subsection]
 \newtheorem{proposition}[theorem]{Proposition}
 \newtheorem{lemma}[theorem]{Lemma}
 \newtheorem{corollary}[theorem]{Corollary}
\theoremstyle{definition}
 \newtheorem{definition}[theorem]{Definition}
\theoremstyle{remark}
 \newtheorem{remark}[theorem]{Remark}
\begin{document}
\title[Rationality of $\mathfrak{M}_3$]{The rationality of the moduli space of curves of genus $3$ after P.
Katsylo} 

\author{Christian B\"ohning}

\maketitle

\begin{abstract}
This article is a survey of P. Katsylo's proof that the moduli space $\mathfrak{M}_3$ of smooth projective complex 
 curves of genus $3$ is rational. We hope to make the argument more comprehensible and transparent by emphasizing
the underlying geometry in the proof and its key structural features. 
\end{abstract}

\tableofcontents

\section{Introduction}

The question whether $\mathfrak{M}_3$ is a rational variety or not had been open for a long time until an
affirmative answer was finally given by P. Katsylo in 1996. There is a well known transition in the behaviour of
the moduli spaces $\mathfrak{M}_g$ of smooth projective complex curves of genus $g$ from unirational for small $g$
to general type for larger values of $g$; the moral reason that $\mathfrak{M}_3$ should have a good chance to be
rational is that it is birational to a quotient of a projective space by a \emph{connected} linear algebraic group.
No variety of this form has been proved irrational up to now. More precisely, $\mathfrak{M}_3$ is birational to
the moduli space of plane quartic curves for $\mathrm{PGL}_3\,\mathbb{C}$-equivalence. All the moduli spaces
$C(d)$ of plane curves of given degree $d$ are conjectured to be rational (see \cite{Dol2}, p.162; in fact, there
it is conjectured that all the moduli spaces of hypersurfaces of given degree $d$ in $\mathbb{P}^n$ for the
$\mathrm{PGL}_{n+1}\,\mathbb{C}$-action are rational. I do not know if this conjecture should be attributed to
Dolgachev or someone else).\\
There are heuristic reasons that the spaces $C(d)$ should be rational at least for all large enough values for
$d$. Maybe it is not completely out of reach to prove this rigorously. We hope to return to this problem in the
future. In any case one might hazard the guess that irregular behaviour of $C(d)$ is most likely to be found for
small values of $d$, and showing rationality for $C(4)$ turned out to be exceptionally hard.\\
Katsylo's proof is long and computational, and, due to the importance of the result, it seems desirable to give a
more accessible and geometric treatment of the argument.\\
This paper is divided into two main sections (sections 2 and 3) which are further divided into subsections.
Section 2 treats roughly the contents of Katsylo's first paper \cite{Kat1} and section 3 deals with his second
paper \cite{Kat2}.\\
Finally I would like to thank Professor Yuri Tschinkel for proposing the project and many useful discussions.
Moreover, I am especially grateful to Professor Fedor Bogomolov with whom I discussed parts of the project and
who provided a wealth of helpful ideas.

\section{A remarkable $(\mathrm{SL}_3\,\mathbb{C} , \mathrm{SO}_3\,\mathbb{C})$-section}

\subsection{$(G,H)$-sections and covariants}
A general, i.e. nonhyperelliptic, smooth projective curve $C$ of genus $3$ is realized as a smooth plane quartic
curve via the canonical embedding, whence $\mathfrak{M}_3$ is birational to the orbit space
$C(4):=\mathbb{P}(H^0(\mathbb{P}^2, \mathcal{O}(4))/\mathrm{SL}_3\,\mathbb{C}$. We remark that whenever one has an
affine algebraic group $G$ acting on an irreducible variety $X$, then, according to a result of Rosenlicht, there
exists a nonempty invariant open subset $X_0\subset X$ such that there is a geometric quotient for the action of
$G$ on $X_0$ (cf. \cite{Po-Vi}, thm. 4.4). In the following we denote by $X/G$ any birational model of this
quotient, i.e. any model of the field $\mathbb{C}(X)^G$  of invariant rational functions.\\
The number of methods to prove rationality of quotients of projective spaces by connected reductive groups is
quite limited (cf. \cite{Dol1} for an excellent survey). The only approach which our problem is immediately
amenable to seems to be the method of $(G,H)$-sections. (There are two other points of view I know of: The first is
based on the remark that if we have a nonsingular plane quartic curve $C$, the double cover of $\mathbb{P}^2$
branched along
$C$ is a Del Pezzo surface of degree $2$, and conversely, given a Del Pezzo surface $S$ of degree $2$, then $|
-K_S |$ is a regular map which exhibits $S$ as a double cover of $\mathbb{P}^2$ branched along a plane quartic
$C$; this sets up a birational isomorphism between $\mathfrak{M}_3$ and $\mathfrak{D}\mathfrak{P}(2)$, the moduli
space of Del Pezzo surfaces of degree $2$. We can obtain such an $S$ by blowing up $7$ points in $\mathbb{P}^2$,
and one can prove that $\mathfrak{D}\mathfrak{P}(2)$ is birational to the quotient of an open subset of
$P_2^7:=(\mathbb{P}^2)^7/\mathrm{PGL}_3\,\mathbb{C}$, the configuration space of $7$ points in $\mathbb{P}^2$
(which is visibly rational), modulo an action of the Weyl group $W(E_7)$ of the root system of type $E_7$ by
Cremona transformations (note that $W(E_7)$ coincides with the permutation group of the $(-1)$-curves on $S$ that
preserves the incidence relations between them). This group is a rather large finite group, in fact, it has order
$2^{10}\cdot 3^4\cdot 5\cdot 7$. This approach does not seem to have led to anything definite in the direction of
proving rationality of $\mathfrak{M}_3$ by now, but see
\cite{D-O} for more information.\\
The second alternative, pointed out by I. Dolgachev, is to remark that $\mathfrak{M}_3$ is birational to
$\mathfrak{M}_3^{\mathrm{ev}}$, the moduli space of genus $3$ curves together with an even theta-characteristic;
this is the content of the classical theorem due to G. Scorza. The latter space is birational to the space of
nets of quadrics in $\mathbb{P}^3$ modulo the action of $\mathrm{SL}_4\,\mathbb{C}$, i.e. $\mathrm{Grass}(3,
\mathrm{Sym}^2\, (\mathbb{C}^4)^{\vee }) / \mathrm{SL}_4\,\mathbb{C}$. See \cite{Dol3}, 6.4.2, for more on this.
Compare also \cite{Kat0}, where the rationality of the related space\\ $\mathrm{Grass}(3,
\mathrm{Sym}^2\, (\mathbb{C}^5)^{\vee }) / \mathrm{SL}_5\,\mathbb{C}$ is proven; this proof, however, cannot be 
readily adapted to our situation, the difficulty seems to come down to that $4$, in contrast to $5$, is even).
\begin{definition}
Let $X$ be an irreducible variety with an action of a linear algebraic group $G$, $H< G$ a subgroup. An
irreducible subvariety $Y\subset X$ is called a \emph{$(G,H)$-section} of the action of $G$ on X if 
\begin{itemize}
\item[(1)] $\overline{G\cdot Y}=X$ ;
\item[(2)] $H\cdot Y\subset Y$;
\item[(3)] $g\in G$, $gY\cap Y \neq \emptyset$ $\implies$ $g\in H$.
\end{itemize} 
\end{definition}
In this situation $H$ is the normalizer $N_G(Y):=\{g \in G\, | \, gY\subset Y\}$ of $Y$ in $G$. The following
proposition collects some properties of $(G,H)$-sections.
\begin{proposition}
\begin{itemize}
\item[(1)] The field $\mathbb{C}(X)^G$ is isomorphic to the field $\mathbb{C}(Y)^H$ via restriction of functions
to $Y$.
\item[(2)] Let $Z$ and $X$ be $G$-varieties, $f : Z \to X$ a dominant $G$-morphism, $Y$ a $(G,H)$-section of $X$,
and $Y'$ an irreducible component of $f^{-1}(Y)$ that is $H$-invariant and dominates $Y$. Then $Y'$ is a
$(G,H)$-section of $Z$.
\end{itemize}
\end{proposition}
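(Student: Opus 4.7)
For (1), I will compare the two invariant fields through the morphism $\mu\colon G \times Y \to X$, $(g,y) \mapsto g\cdot y$, which is dominant by property (1) of a $(G,H)$-section. The key observation is that property (3) forces the fibers of $\mu$ to be $H$-orbits once $H$ acts on $G \times Y$ via $h\cdot(g,y) = (gh^{-1}, hy)$: if $g_1 y_1 = g_2 y_2$ with $y_1, y_2 \in Y$, then $g_2^{-1} g_1$ carries $y_1$ into $Y$, hence $g_2^{-1}g_1 \in H$ by (3). This $H$-action commutes with the $G$-action on $G \times Y$ by left translation on the first factor, and $\mu$ is $G$-equivariant and $H$-invariant, so after restricting to a suitable invariant open subset $\mu$ exhibits $X$ as a rational $H$-quotient of $G\times Y$. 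Taking invariants then gives
\[
\mathbb{C}(X)^G \cong \bigl(\mathbb{C}(G \times Y)^H\bigr)^G = \mathbb{C}(G \times Y)^{G \times H} = \bigl(\mathbb{C}(G \times Y)^G\bigr)^H = \mathbb{C}(Y)^H,
\]
and tracing an element through the chain shows that the composite is precisely restriction to $Y$.

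For (2), I verify the three defining axioms for $Y' \subset Z$. Axiom (2) is assumed. For axiom (3), let $g \in G$ with $gY' \cap Y' \neq \emptyset$; because $f$ is $G$-equivariant and $f(Y') \subset Y$,
\[
\emptyset \neq f(gY' \cap Y') \subset gf(Y') \cap f(Y') \subset gY \cap Y,
\]
so axiom (3) for $Y$ forces $g \in H$.

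The substantive point is axiom (1), $\overline{G \cdot Y'} = Z$, which I would establish by a dimension count. From part (1) of the proposition one reads off $\dim X = \dim G + \dim Y - \dim H$. Since $Y'$ is an irreducible component of $f^{-1}(Y)$ that dominates $Y$, the standard fiber-dimension theorem, applied both to $f\colon Z \to X$ and to $f|_{Y'}\colon Y' \to Y$, pins down $\dim Y' = \dim Y + (\dim Z - \dim X)$. Using axiom (3) just proved for $Y'$, the same orbit-fiber analysis for $G \times Y' \to Z$ gives $\dim(G \cdot Y') = \dim G + \dim Y' - \dim H$, and combining the three identities yields $\dim(G \cdot Y') = \dim Z$; hence the irreducible constructible set $G \cdot Y'$ is dense in $Z$.

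The main obstacle I expect is the careful bookkeeping in part (1), since ``$(G \times Y)/H$'' refers only to Rosenlicht's rational quotient: one must pass to a sufficiently small $H$-saturated open subset on which the arguments go through literally and verify that the resulting field isomorphism genuinely coincides with restriction of functions. Once this setup is in place, both parts of the proposition reduce to the orbit-fiber calculations sketched above.
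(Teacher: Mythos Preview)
The paper states this proposition without proof; it simply records the two properties as known facts about $(G,H)$-sections and moves on. So there is nothing in the paper to compare against line by line.

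Your argument is correct and is the standard one. A couple of remarks on points you flagged yourself or left implicit. For part~(1), the identification $\mathbb{C}(G\times Y)^G=\mathbb{C}(Y)$ is immediate because the left $G$-action on $G\times Y$ is free with quotient $Y$; the passage from $\mu$ to a birational isomorphism $(G\times Y)/H\dasharrow X$ indeed needs Rosenlicht, but only to know that $\mathbb{C}(X)=\mathbb{C}(G\times Y)^H$ as fields, which is exactly what your chain of equalities uses. Tracing $e\times Y\hookrightarrow G\times Y\to X$ shows the composite isomorphism is restriction, as you say.

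For part~(2), the step $\dim Y'=\dim Y+(\dim Z-\dim X)$ deserves one extra sentence: you need that a general point of $Y$ lies outside the jump locus of $f$ (the closed set where fibre dimension exceeds $\dim Z-\dim X$). This holds because that locus is a proper $G$-invariant closed subset of $X$, while $G\cdot Y$ is dense; hence $Y$ cannot be contained in it. With this observation both the upper and lower bounds on $\dim Y'$ go through, and your dimension count $\dim(G\cdot Y')=\dim G+\dim Y'-\dim H=\dim Z$ finishes axiom~(1). Axiom~(3) is exactly as you wrote.
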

Part (2) of the proposition suggests that, to simplify our problem of proving rationality of $C(4)$, we should
look at covariants $\mathrm{Sym}^4\,(\mathbb{C}^3)^{\vee} \to \mathrm{Sym}^2\, (\mathbb{C}^3)^{\vee}$  of low
degree ($\mathbb{C}^3$ is the standard representation of $\mathrm{SL}_3\,\mathbb{C}$). The highest weight theory of
Cartan-Killing allows us to decompose $\mathrm{Sym}^i(\mathrm{Sym}^4\, (\mathbb{C}^3)^{\vee})$, $i\in\mathbb{N}$,
into irreducible subrepresentations (this is best done by a computer algebra system, e.g.
\ttfamily{Magma}\rmfamily) and pick the smallest $i$ such that $\mathrm{Sym}^2\,(\mathbb{C}^3)^{\vee}$ occurs as
an irreducible summand. This turns out to be $5$ and $\mathrm{Sym}^2\, (\mathbb{C}^3)^{\vee}$ occurs with
multiplicity
$2$.\\ For nonnegative integers $a,\: b$ we denote by $V(a,b)$ the irreducible $\mathrm{SL}_3\,\mathbb{C}$-module
whose highest weight has numerical labels  $a,\: b$.\\
Let us now describe the two resulting independent covariants
\begin{gather*}
\alpha_1,\:\alpha_2 : V(0,4) \to V(0,2)
\end{gather*}
of order $2$ and degree $5$ geometrically. We follow a classical geometric method of Clebsch to pass from
invariants of binary forms to contravariants of ternary forms (see \cite{G-Y}, \S 215). The covariants $\alpha_1$,
$\alpha_2$ are described in Salmon's treatise \cite{Sal}, p. 261, and p. 259, cf. also
\cite{Dix}, p. 280-282. We start by recalling the structure of the ring of $\mathrm{SL}_2\,\mathbb{C}$-invariants
of binary quartics (\cite{Muk}, section 1.3, \cite{Po-Vi}, section 0.12).
\subsection{Binary quartics}
Let 
\begin{gather}
f_4=\xi_0x_0^4+4\xi_1x_0^3x_1+6\xi_2x_0^2x_1^2+4\xi_3x_0x_1^3+\xi_4x_1^4
\end{gather}
be a general binary quartic form. The invariant algebra $R=\mathbb{C}[\xi_0,\dots
,\xi_4]^{\mathrm{SL}_2\,\mathbb{C}}$ is freely generated by two homogeneous invariants $g_2$ and $g_3$ (where
subscripts indicate degrees):
\begin{gather}
g_2(\xi)=\det\left(\begin{array}{cc} \xi_0 & \xi_2 \\ \xi_2 & \xi_4 \end{array}\right) -
4\det\left(\begin{array}{cc} \xi_1 & \xi_2 \\ \xi_2 & \xi_3 \end{array}\right)\: ,\\
g_3(\xi )=\det\left(\begin{array}{ccc} \xi_0 & \xi_1 & \xi_2 \\ \xi_1 & \xi_2 & \xi_3\\ \xi_2 & \xi_3 & \xi_4
\end{array}\right)\: .
\end{gather}
If we identify $f_4$ with its zeroes $z_1,\dots , z_4\in\mathbb{P}^1=\mathbb{C}\cup\{\infty\}$ and write 
\begin{gather*}
\lambda =\frac{(z_1-z_3)(z_2-z_4)}{(z_1-z_4)(z_2-z_3)}
\end{gather*}
for the cross-ratio, then
\begin{gather*}
g_3=0  \iff \lambda = -1,\: 2, \: \mathrm{or}\: \frac{1}{2}\: ,\\
g_2=0 \iff \lambda = -\omega \:\mathrm{or}\: -\omega^2\; \mathrm{with}\; \omega=e^{\frac{2\pi i}{3}}\: ,
\end{gather*}
the first case being commonly referred to as \emph{harmonic cross-ratio}, the second as \emph{equi-anharmonic
cross-ratio} (see \cite{Cl}, p. 171; the terminology varies a lot among different authors, however).\\
Clebsch's construction is as follows: Let $x,\: y, \: z$ be coordinates in $\mathbb{P}^2$, and let $u,\: v, \: w$
be coordinates in the dual projective plane $(\mathbb{P}^2)^{\vee}$. Let $\varphi = ax^4+4bx^3y^4+\dots  $ be a
general ternary quartic. We want to consider those lines in $\mathbb{P}^2$ such that their intersection with the
associated quartic curve $C_{\varphi}$ is a set of points whose cross-ratio is harmonic resp. equi-anharmonic. Writing
a line as $ux+vy+wz=0$ and substituting in (2) resp. (3), we see that in the equi-anharmonic case we get a quartic in
$(\mathbb{P}^2)^{\vee}$, and in the harmonic case a sextic. More precisely this gives us two
$\mathrm{SL}_3\,\mathbb{C}$-equivariant polynomial maps
\begin{gather}
\sigma  :  V(0,4) \to V(0,4)^{\vee}  \: ,\\
\psi  :  V(0,4) \to V(0,6)^{\vee} \: ,
\end{gather}
and $\sigma$ is homogeneous of degree $2$ in the coefficients of $\varphi$ whereas $\psi$ is homogeneous of
degree $3$ in the coefficients of $\varphi$ (we say $\sigma$ is a \emph{contravariant} of degree $2$ on $V(0,4)$
with values in $V(0,4)$, and analogously for $\psi$). Finally we have the \emph{Hessian covariant} of $\varphi$:
\begin{gather}
\mathrm{Hess} : V(0,4) \to V(0,6)
\end{gather}
which associates to $\varphi$ the determinant of the matrix of second partial derivatives of $\varphi$. It is of
degree $3$ in the coefficients of $\varphi$.\\
We will now cook up $\alpha_1$, $\alpha_2$ from $\varphi,\: \sigma , \: \psi ,\: \mathrm{Hess}$: Let $\varphi$
operate on $\psi$; by this we mean that if $\varphi =ax^4+4b x^3y +\dots $ then we act on $\psi$ by the
differential operator
\begin{gather*}
a\frac{\partial^4}{\partial u^4}+ 4b\frac{\partial^4}{\partial u^3 \partial v} + \dots
\end{gather*}
(i.e. we replace a coordinate by partial differentiation with respect to the dual coordinate). In this way we get
a contravariant $\rho$ of degree $4$ on $V(0,4)$ with values in $V(0,2)$. If we operate with $\rho$ on $\varphi$
we get $\alpha_1$.\\
We obtain $\alpha_2$ if we operate with $\sigma$ on $\mathrm{Hess}$.\\
This is a geometric way to describe $\alpha_1$, $\alpha_2$. For every $c=[c_1 : c_2]\in \mathbb{P}^1$ we get in
this way a rational map
\begin{gather}
f_c=c_1\alpha_1 + c_2\alpha_2: \mathbb{P}(V(0, 4)) \dasharrow \mathbb{P}(V(0,2))\: .
\end{gather}
For the special quartics
\begin{gather}
\varphi =ax^4+by^4+cz^4+6fy^2z^2+6gz^2x^2+6hx^2y^2
\end{gather}
the quantities $\alpha_1$ and $\alpha_2$ were calculated by Salmon in \cite{Sal}, p. 257 ff. We reproduce the
results here for the reader's convenience. Put
\begin{gather}
L:=abc \, ,\: P:=af^2+bg^2+ch^2\, ,\\ \: R:=fgh \: ; \nonumber
\end{gather}
Then
\begin{gather}
\alpha_1=(3L+9P+10R)(afx^2+bgy^2+chz^2)+\\
(10L+2P+4R)(ghx^2+hfy^2+fgz^2)\nonumber \\-12(a^2f^3x^2+b^2g^3y^2+c^2h^3z^2)\: ; \nonumber\\
\alpha_2=(L+3P+30R)(afx^2+bgy^2+chz^2)+\\
(10L-6P-12R)(ghx^2+hfy^2+fgz^2)\nonumber \\-4(a^2f^3x^2+b^2g^3y^2+c^2h^3z^2)\: . \nonumber
\end{gather}
Note that the covariant conic $-\frac{1}{20}(\alpha_1-3\alpha_2)$ looks a little simpler.\\
Let us see explicitly, using (8)-(11), that $f_c$ is dominant for every $c\in\mathbb{P}^1$; for $a=b=c=f=g=h=1$ we
get $\alpha_1 = 48(x^2+y^2+z^2)$, $\alpha_2=16(x^2+y^2+z^2)$, so the image of $\varphi$ under $f_c$ in this case is
a nonsingular conic unless $c=[-1 : 3]$. But for $a=1,\: b=c=0,\: f=g=h=1$ we obtain $\alpha_1=13x^2+6y^2+6z^2$,
$\alpha_2=11x^2-18y^2-18z^2$, and for these values $-\alpha_1+3\alpha_2$ defines a nonsingular conic.\\
Let $\mathcal{L}_c$ be the linear system generated by $6$ quintics which defines $f_c$ and let $B_c$ be its base
locus; thus $U_c:=\mathbb{P}(V(0,4))\backslash B$ is an $\mathrm{SL}_3\,\mathbb{C}$-invariant open set, and if
$f_{c,0}:=f_c|_{U_c}$, then $X_c:=f_{c, 0}^{-1}(\mathbb{C}h_0)$, where $h_0$ defines a non-singular conic, is a
good candidate for an $(\mathrm{SL}_3\,\mathbb{C} ,\, \mathrm{SO}_3\,\mathbb{C} )$-section of $U_c$. We choose
$h_0=xz-y^2$.\\
\begin{proposition}
$X_c$ is a smooth irreducible $\mathrm{SO}_3\,\mathbb{C}$-invariant variety, $\overline{\mathrm{SL}_3\,
\mathbb{C}\cdot X }=\mathbb{P}(V(0,4))$, and the normalizer of $X_c$ in $\mathrm{SL}_3\,\mathbb{C}$ is exactly
$\mathrm{SO}_3\,\mathbb{C}$. $X_c$ is an
$(\mathrm{SL}_3\mathbb{C} ,\, \mathrm{SO}_3\,\mathbb{C})$-section of $U_c$.
\end{proposition}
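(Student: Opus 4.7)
The plan is to deduce all four assertions of the proposition from the $\mathrm{SL}_3\mathbb{C}$-equivariance of $f_c$ together with the two inputs established just above, namely the dominance of $f_c$ (verified by direct evaluation on two diagonal quartics via Salmon's formulas (10)--(11)) and the fact that the non-singular conics in $\mathbb{P}(V(0,2))$ form a single open orbit $\mathcal{O} \cong \mathrm{SL}_3\mathbb{C}/\mathrm{SO}_3\mathbb{C}$. The three $(G,H)$-section properties are essentially formal consequences of how $X_c$ was constructed; the smoothness and irreducibility of $X_c$ will then be extracted from the equivariant structure of the morphism $f_c\colon W \to \mathcal{O}$ where $W := f_{c,0}^{-1}(\mathcal{O})$.

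For $\mathrm{SO}_3\mathbb{C}$-invariance, $\mathrm{SO}_3\mathbb{C}$ is by construction the stabilizer in $\mathrm{SL}_3\mathbb{C}$ of the quadratic form $h_0 = xz - y^2$, so it fixes $[h_0]\in\mathbb{P}(V(0,2))$, and equivariance of $f_c$ makes the fiber $X_c = f_{c,0}^{-1}([h_0])$ stable. For the density $\overline{\mathrm{SL}_3\mathbb{C}\cdot X_c} = \mathbb{P}(V(0,4))$, dominance of $f_c$ ensures that $W$ is a non-empty open subset of $\mathbb{P}(V(0,4))$, and for every $\varphi\in W$ one picks $g\in\mathrm{SL}_3\mathbb{C}$ with $g\cdot[h_0] = f_c(\varphi)$, whence $g^{-1}\varphi\in X_c$. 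For the normalizer, if $gX_c\cap X_c\neq\emptyset$ and $\varphi, g\varphi\in X_c$, then $g\cdot[h_0] = f_c(g\varphi) = [h_0]$, forcing $g$ into the stabilizer of $[h_0]$, which up to the finite centre $\mu_3$ of $\mathrm{SL}_3\mathbb{C}$ — acting trivially on the projective spaces in play and therefore irrelevant to the section property — is $\mathrm{SO}_3\mathbb{C}$. Taken together these three observations are exactly the $(G,H)$-section axioms.

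For smoothness and irreducibility I would work globally on $W$. At any $\varphi\in W$, composing the infinitesimal orbit map $T_e\mathrm{SL}_3\mathbb{C}\to T_\varphi W$ with $df_c$ reproduces the orbit-map differential $T_e\mathrm{SL}_3\mathbb{C}\twoheadrightarrow T_{f_c(\varphi)}\mathcal{O}$, which is surjective by transitivity of $\mathrm{SL}_3\mathbb{C}$ on $\mathcal{O}$; hence $df_c$ is surjective at every point of $W$, the morphism $f_c\colon W\to\mathcal{O}$ is smooth, and in particular every fiber — including $X_c$ — is smooth, of the expected dimension $14 - 5 = 9$. For irreducibility, write $X_c = \coprod_{i=1}^k X_c^i$ as its decomposition into irreducible components; the connected group $\mathrm{SO}_3\mathbb{C}$ cannot permute components non-trivially, so each $X_c^i$ is $\mathrm{SO}_3\mathbb{C}$-invariant, and the normalizer property then forces the translates $W_i := \mathrm{SL}_3\mathbb{C}\cdot X_c^i$ to be pairwise disjoint and to cover $W$. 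Each $W_i$ is constructible of the maximal dimension $14$, hence contains a non-empty open subset of $W$; two such opens in the irreducible variety $W$ must meet, so $k = 1$ and $X_c$ is irreducible.

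The only genuinely non-formal input to this plan is the dominance of $f_c$, which is a concrete calculation and has already been carried out in the excerpt. The main subtlety is that the four assertions really must be established in the stated order: the disjoint-cover argument for irreducibility in the last step relies on the normalizer property of the preceding step, and without the connectedness of $\mathrm{SO}_3\mathbb{C}$ the $\mathrm{SL}_3\mathbb{C}$-translates of the components of $X_c$ could in principle overlap and the argument would collapse. Once the equivariant set-up is exploited in this fashion, everything else is a formal consequence of the homogeneity of $\mathcal{O}$ and the infinitesimal transitivity of $\mathrm{SL}_3\mathbb{C}$.
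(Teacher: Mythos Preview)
Your proof is correct and follows essentially the same strategy as the paper: $\mathrm{SO}_3\mathbb{C}$-invariance by construction, smoothness via surjectivity of $df_c$ deduced from the infinitesimal orbit map onto the homogeneous target, and irreducibility by combining connectedness of $\mathrm{SO}_3\mathbb{C}$, the normalizer condition, and the disjointness of components forced by smoothness. Your phrasing of the irreducibility step (pairwise disjoint full-dimensional constructible pieces $W_i$ covering the irreducible $W$) is a mild repackaging of the paper's argument (which instead produces a point in $X_1\cap X_2$ directly), and you are slightly more careful than the paper in flagging the $\mu_3$-centre issue when identifying the stabilizer of $[h_0]$.
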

\begin{proof}
The $\mathrm{SO}_3\,\mathbb{C}$-invariance of $X_c$ follows from its construction. We show that the differential
$d (f_{c,0})_x$ is surjective for all $x\in X_c$: In fact, 
\begin{gather*}
d(f_{c,0})_x (T_x U_c)\supset d(f_{c, 0})_x (\mathfrak{sl}_3(x))=\mathfrak{sl}_3 (f_{c,0}(x))=T_{\mathbb{C}h_0}\, 
\mathbb{P} V(0, 2)
\end{gather*}
Here $\mathfrak{sl}_3(x)$ denotes the tangent space to the $\mathrm{SL}_3\,\mathbb{C}$-orbit of $x$ in $U_c$, i.e.
if $O_x \, :\, \mathrm{SL}_3\,\mathbb{C} \to U_c$ is the map with $O_x(g)=gx$, then we get a map $d(O_x)_e \, : \,
\mathfrak{sl}_3 \to T_x U_c$, and $\mathfrak{sl}_3 (x):= \{ d(O_x)_e(\xi )\, | \, \xi \in \mathfrak{sl}_3\}$.
Hence $X_c$ is smooth. \\Assume $X_c$ were reducible, let $X_1$ and $X_2$ be two irreducible components. By prop.
2.1.2 (2) and the irreducibility of the group $\mathrm{SO}_3\,\mathbb{C}$, $X_1$ and $X_2$ are $(\mathrm{SL}_3\,
\mathbb{C} ,\mathrm{SO}_3\,\mathbb{C})$-sections of $U_c$, so we can find $g\in\mathrm{SL}_3\,\mathbb{C}$, $x_1\in
X_1$, $x_2\in X_2$, such that $g x_1 =x_2$. But then, by the $\mathrm{SL}_3\,\mathbb{C}$-equivariance of $f_{c,0}$,
$g$ stabilizes
$\mathbb{C}h_0$ and is thus in $\mathrm{SO}_3\,\mathbb{C}$. But, again by the irreducibility of
$\mathrm{SO}_3\,\mathbb{C}$, $x_2$ is also a point of $X_1$, i.e. $X_1$ and $X_2$ meet. This contradicts the
smoothness of $X_c$.
\end{proof}
The trouble is that, if $\overline{X}_c$ is the closure of $X_c$ in $\mathbb{P}(V(0,4))$, then $\overline{X}_c$ is
an irreducible component of the intersection of $5$ quintics. To eventually prove rationality, however, we would
like to have some equations of lower degree. This can be done for special $c$. 

\subsection{From quintic to cubic equations}
If $\Gamma_{f_c} \subset \mathbb{P} V(0,4)\times \mathbb{P} V(0,2)$ is the graph of $f_c$, it is natural to look
for $\mathrm{SL}_3\,\mathbb{C}$-equivariant maps
\begin{gather*}
\vartheta : V(0,4)\times V(0,2) \to V'
\end{gather*}
where $V'$ is another $\mathrm{SL}_3\,\mathbb{C}$-representation, $\vartheta$ is a homogeneous polynomial map in
both factors $V(0,4)$, $V(0,2)$, \emph{of low degree}, say $d$, in the first factor, \emph{linear} in the second,
and such that $\Gamma_{f_c}$ is an irreducible component of $\{ (x,y)\in \mathbb{P}V(0,4) \times \mathbb{P} V(0,2)
\, | \,
\vartheta (x , y)=0 \}$. If $V'$ is irreducible, there is an easy way to tell if $\vartheta$ vanishes on
$\Gamma_{f_c}$ \emph{for some} $c\in\mathbb{P}^1$: This will be the case if $V'$ occurs with multiplicity
one in $\mathrm{Sym}^{d+5}\, V(0,4)$. Here is the result.
\begin{definition}
Let $\Psi : V(0,4) \to V(2,2)$ be the up to factor unique $\mathrm{SL}_3\,\mathbb{C}$-equivariant, homogeneous of
degree $3$ polynomial map with the indicated source and target spaces, and let $\Phi : V(2,2)\times V(0,2) \to
V(2,1)$ be the up to factor unique bilinear $\mathrm{SL}_3\,\mathbb{C}$-equivariant map. Define $\Theta :
V(0,4)\times V(0,2)\to V(2,1)$ by $\Theta (x,y):=\Phi (\Psi (x),y)$.
\end{definition}

\begin{remark}
The existence and essential uniqueness of the maps of definition 2.3.1 can be easily deduced from known (and
implemented in
\ttfamily Magma\rmfamily ) decomposition laws for $\mathrm{SL}_3\,\mathbb{C}$-representations. That they are only
determined up to a nonzero constant factor will never bother us, and we admit this ambiguity in notation. The
explicit form of $\Psi$, $\Phi$, $\Theta$ will be needed later for checking certain non-degeneracy  conditions
through explicit computation. They can be found in Appendix A, formulas (64), (65).
\end{remark}
\begin{theorem}
\begin{itemize}
\item[(1)]
The linear map $\Theta (f, \cdot ) : V(0,2) \to V(2,1)$ has one-dimensional kernel for $f$ in an open dense subset
$V_0$ of $V(0,4)$, and, in particular, $\mathrm{ker}\,\Theta (h_0^2,\cdot )=\mathbb{C} h_0$.
\item[(2)]
For some $c_0\in\mathbb{P}^1$, $\Gamma_{f_{c_0}}$ is an irreducible component of $\{\Theta (x, y)=0\}\subset
\mathbb{P} V(0,4)\times \mathbb{P} V(0,2)$. 
\item[(3)]
$\overline{X_{c_0}}\subset \mathbb{P} V(0,4)$ coincides with the closure $\overline{X}$ in $\mathbb{P} V(0,4)$ of
the preimage
$X$ of
$h_0$ under the morphism from $\mathbb{P} V_0 \to \mathbb{P} V(0,2)$ given by $f\mapsto \mathrm{ker}\,\Theta
(f,\cdot )$, and is thus an irreducible component of the algebraic set $\{ \mathbb{C} f \, | \, \Phi ( \Psi (f) ,
h_0)=0
\}\subset \mathbb{P} V(0,4)$ defined by $15$ cubic equations. 
\item[(4)]
The rational map $\Psi : \mathbb{P} V(0,4)\dasharrow \overline{\Psi \mathbb{P} V(0,4)}\subset \mathbb{P}
V(2,2)$ as well as its restriction to $X$ are birational isomorphisms unto their images.
\end{itemize}
\end{theorem}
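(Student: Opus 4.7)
The plan combines representation-theoretic multiplicity counting for $\mathrm{SL}_3\,\mathbb{C}$ with one explicit computation at the distinguished point $f_0 = h_0^2$. Parts (1) and (2) are coupled; I would prove (2) first and use it to supply the lower bound on $\dim \ker \Theta(f, \cdot)$ needed in (1).

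For (2), note that for $i = 1, 2$ the map $x \mapsto \Theta(x, \alpha_i(x))$ is an $\mathrm{SL}_3\,\mathbb{C}$-equivariant polynomial map $V(0,4) \to V(2,1)$ of total degree $3 + 5 = 8$. If $V(2,1)$ occurs with multiplicity exactly one in $\mathrm{Sym}^8\, V(0,4)$---a decomposition I would verify via \texttt{Magma}---then these two maps lie in a one-dimensional space of equivariant polynomial maps, hence are proportional, and some nonzero combination $f_{c_0} = c_1 \alpha_1 + c_2 \alpha_2$ satisfies $\Theta(x, f_{c_0}(x)) \equiv 0$. The resulting inclusion $\Gamma_{f_{c_0}} \subseteq \{\Theta = 0\}$ places the irreducible $14$-dimensional variety $\Gamma_{f_{c_0}}$ inside the ``graph component'' of $\{\Theta = 0\}$---the component where the generic fiber over $\mathbb{P} V(0,4)$ is a single point, corresponding to $\mathrm{rk}\,\Theta(x,\cdot) = 5$---which also has dimension $14$, forcing equality as components. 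For (1), $\mathrm{SO}_3\,\mathbb{C}$-equivariance shows $h_0 \in \ker \Theta(h_0^2, \cdot)$: both $\Psi(h_0^2) \in V(2,2)$ and $\Phi(\Psi(h_0^2), h_0) \in V(2,1)$ are $\mathrm{SO}_3\,\mathbb{C}$-invariant, but a branching-rule check shows that $V(2,1)$ admits no nonzero $\mathrm{SO}_3\,\mathbb{C}$-invariant vectors, so the second value vanishes. Explicit evaluation of the $15 \times 6$ matrix $\Theta(h_0^2, \cdot)$ using the formulas of Appendix A then confirms that its kernel is exactly $\mathbb{C} h_0$. Upper semicontinuity of nullity gives $\dim \ker \leq 1$ on an open dense subset; combining with the lower bound $\dim \ker \geq 1$ on $U_{c_0}$ from (2) (since $f_{c_0}(x) \in \ker \Theta(x,\cdot)$) yields the required open dense $V_0$.

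For (3), the projection $\mathrm{pr}_1 : \Gamma_{f_{c_0}} \to \mathbb{P} V(0,4)$ restricts to an isomorphism over $U_{c_0}$, so $X_{c_0}$ is identified with the $\mathrm{pr}_1$-image of $\Gamma_{f_{c_0}} \cap (\mathbb{P} V(0,4) \times \{\mathbb{C} h_0\})$. On $V_0$, one has $\ker \Theta(x, \cdot) = \mathbb{C} h_0 \iff \Phi(\Psi(x), h_0) = 0$, identifying $\overline{X_{c_0}}$ with the component $\overline{X}$ of $\{\Phi(\Psi(x), h_0) = 0\} \subset \mathbb{P} V(0,4)$, which is cut out by the $\dim V(2,1) = 15$ components of $\Phi(\Psi(x), h_0)$, each cubic in $x$. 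For (4), I would exhibit an explicit point $x_0 \in V(0,4)$---a sufficiently generic diagonal Salmon quartic is a natural candidate---at which the differential $d\Psi_{x_0}$ has full rank $15$ and the fiber of $\mathbb{P}\Psi$ over $[\Psi(x_0)]$ equals $\{[x_0]\}$. Generic unramification of $\mathbb{P}\Psi$ from the first condition, combined with a degree-one count from the second, imply that $\Psi$ is birational onto its image; applying the same analysis to a generic $x_0 \in X$ yields the statement for $\Psi|_X$.

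The main obstacle is the multiplicity identity $[\mathrm{Sym}^8\, V(0,4) : V(2,1)] = 1$, which is the representation-theoretic miracle that reduces the naive quintic equations to cubic ones; the remaining verifications reduce to semicontinuity, dimension counting, and mechanical computation using Appendix~A.
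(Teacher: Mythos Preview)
Your arguments for parts (1)--(3) are essentially those of the paper: the key step is the multiplicity identity $[\mathrm{Sym}^8 V(0,4) : V(2,1)] = 1$, which forces $\Theta(\,\cdot\,, f_{c_0}(\,\cdot\,))\equiv 0$ for some $c_0$, and everything else follows by upper semicontinuity plus one explicit evaluation at $h_0^2$. The paper orders (1) before (2) but the logic is the same.

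For part (4) your approach differs from the paper's and contains a gap. Exhibiting a single $x_0$ with $d\Psi_{x_0}$ injective and $(\mathbb{P}\Psi)^{-1}([\Psi(x_0)]) = \{[x_0]\}$ does \emph{not} force $\mathbb{P}\Psi$ to have degree one onto its image, because $\mathbb{P}\Psi$ is only a rational map: after resolving the base locus the fibre over $[\Psi(x_0)]$ may acquire extra points from the exceptional locus, or equivalently, a non-proper generically finite map of degree $d>1$ can have isolated unramified fibres of cardinality $<d$ (e.g.\ $\mathbb{A}^1\setminus\{-1\}\to\mathbb{A}^1$, $t\mapsto t^2$: the fibre over $1$ is the single reduced point $\{1\}$, yet the degree is $2$). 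Nor does ``sufficiently generic Salmon quartic'' rescue the argument, since these form only a $6$-dimensional family inside the $15$-dimensional $V(0,4)$, so $[\Psi(x_0)]$ need not be general in the image.

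The paper instead constructs an explicit rational inverse, reusing precisely the multiplicity trick you exploited in (2). It writes down a bilinear $\mathrm{SL}_3\,\mathbb{C}$-equivariant contraction $\alpha : V(0,4)\times V(2,2)\to \mathrm{Sym}^2\mathbb{C}^3\otimes\mathrm{Sym}^3(\mathbb{C}^3)^{\vee}\simeq V(2,3)\oplus V(1,2)\oplus V(0,1)$; a decomposition check shows none of these irreducibles occurs in $\mathrm{Sym}^4 V(0,4)$, so $\alpha(f,\Psi(f))=0$ for all $f$. One explicit computation then gives $\ker\alpha(\,\cdot\,,\Psi(h_0^2))=\mathbb{C}h_0^2$, and by upper semicontinuity the assignment $g\mapsto\ker\alpha(\,\cdot\,,g)$ is a well-defined rational map on a dense open subset of the image sending $\Psi(f)$ back to $\mathbb{C}f$. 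This furnishes a global rational left inverse, hence birationality, and the restriction to $X$ follows immediately.
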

\begin{proof}
(1): One checks that $V(2,1)$ occurs with multiplicity one in the decomposition of $\mathrm{Sym}^8\, V(0,4)$. Thus
for some $c_0\in\mathbb{P}^1$, we have $\Theta (f, (c_{0,1}\alpha_1+c_{0,2}\alpha_2)(f))=0$ for all $f\in V(0,4)$. The
fact that $\mathrm{ker}\,\Theta (h_0^2,\cdot )=\mathbb{C} h_0$ follows from a direct computation using the
explicit form of $\Theta$. Thus, by
upper-semicontinuity, (1) follows.\\
(2): We have seen in (1) that $\Gamma_{f_{c_0}}$ is contained in $\{\Theta (x, y)=0\}$. Again by (1), 
\begin{gather*}
\Gamma_{f_{c_0}}\cap \left( (U_{c_0}\cap \mathbb{P} V_0 )\times \mathbb{P} V(0,2) \right) =\\ \{\Theta (x, y)=0\}
\cap \left( (U_{c_0}\cap \mathbb{P} V_0 )\times \mathbb{P} V(0,2) \right) \, ,
\end{gather*}
and (2) follows.\\
(3) follows from to (2) and the definition of $X_{c_0}$.\\
(4): Since $X$ is an $(\mathrm{SL}_3\,\mathbb{C} , \mathrm{SO}_3\,\mathbb{C})$-section of $\mathbb{P} V_0$, it
suffices to prove that the $\mathrm{SL}_3\,\mathbb{C}$-equivariant rational map $\Psi : \mathbb{P}
V(0,4)\dasharrow \overline{\Psi \mathbb{P} V(0,4)}$ (defined e.g. in the point $\mathbb{C}h_0^2$) is birational.
We will do this by writing down an explicit rational inverse. To do this, remark that $V(a,b)$ sits as an
$\mathrm{SL}_3\,\mathbb{C}$-invariant linear subspace inside $\mathrm{Sym}^a\mathbb{C}^3\otimes
\mathrm{Sym}^b(\mathbb{C}^3)^{\vee }$ (it has multiplicity one in the decomposition into irreducibles), thus
elements of
$V(a,b)$ may be viewed as tensors
$x=(x_{j_1 ,\dots , j_a}^{i_1 , \dots , i_b})\in T^b_a\, \mathbb{C}^3$, covariant of order $b$ and contravariant
of order $a$, or of type $b\choose a$. The inverse of the determinant tensor $\det^{-1}$ is thus in
$T^0_3\mathbb{C}^3$. For $f\in V(0,4)$ and $g\in V(2,2)$ one defines a bilinear
$\mathrm{SL}_3\,\mathbb{C}$-equivariant map $\alpha : V(0,4)\times V(2,2)\to \mathrm{Sym}^2\mathbb{C}^3\otimes
\mathrm{Sym}^3 (\mathbb{C}^3)^{\vee }$, $(f, g)\mapsto \alpha (f, g)$, as the contraction
\begin{gather*}
s^{i_1\, i_2\, i_3}_{j_1\, j_2}:= f^{i_1\, i_2\, i_4\, i_5}g_{i_5\, j_1}^{i_6\, i_3}\mathrm{det}^{-1}_{j_2\, i_4\,
i_6}\, ,
\end{gather*}
followed by the symmetrization map. One checks that $\mathrm{Sym}^2\mathbb{C}^3\otimes
\mathrm{Sym}^3 (\mathbb{C}^3)^{\vee }$ decomposes as $V(2,3)\oplus V(1,2) \oplus V(0,1)$, but $\mathrm{Sym}^4
V(0,4)$ does not contain these as subrepresentations (use \ttfamily Magma\rmfamily ), so $\alpha (f , \Psi (f))=0$
for all $f\in V(0, 4)$. But the explicit forms of $\Psi$ and $\alpha$ show that $\mathrm{ker} \,\alpha (\cdot , \Psi
(h_0^2))= \mathbb{C}h_0^2$, whence, by upper-semicontinuity, the dimension of the kernel of $\alpha (\cdot , \Psi
(f))$ is one for all $f$ in a dense open subset of $V(0,4)$, and the rational map $\Psi : \mathbb{P} V(0, 4)
\dasharrow \overline{\Psi \mathbb{P} V(0,4)}\subset \mathbb{P} V(2 ,2)$ has the rational inverse $\Psi (f) \mapsto
\mathrm{ker} \, \alpha (\cdot , \Psi (f))$.
\end{proof}
\begin{remark}
It would probably be illuminating to have a geometric interpretation of the covariant $\Psi : V(0,4)\to
V(2,2)$ given above similar to the one for $\alpha_1$, $\alpha_2$ in subsection 2.2. Though there is a
huge amount of classical projective geometry attached to plane quartics, I have been unable to find such a
geometric description.\\
Clearly, $\Psi$ vanishes on the cone of dominant vectors in $V(0,4)$, and one may check, using the explicit
formula for $\Psi$ in Appendix A (64), that $\Psi$ also vanishes on the $\mathrm{SL}_3\,\mathbb{C}$-orbit of
the degree $4$ forms in two variables, $x$ and $y$, say. However, this, together with the fact that $\Psi$ is of degree $3$, is not enough to characterize $\Psi$ since
the same holds also for e.g. the Hessian covariant.
\end{remark}

\subsection{From cubic to quadratic equations}
We have to fix some further notation.
\begin{definition}
\begin{itemize}
\item[(1)]
$Z$ is the affine cone in $V(2,2)$ over $\overline{\Psi (X)}\subset \mathbb{P} V(2,2)$. 
\item[(2)]
$L$ is the linear subspace $L:=\{ g\in V(2,2)\, |\, \Phi (g , h_0) =0 \} \subset V(2,2)$.
\item[(3)]
$\epsilon : V(0,4) \times V(0,2)\to V(2,2)$ is the unique (up to a nonzero factor) nontrivial
$\mathrm{SL}_3\,\mathbb{C}$-equivariant bilinear map with the indicated source and target spaces (the explicit
form is in Appendix A (66)).
\item[(4)]
$\zeta : V(0,4)\times V(0,2) \to V(1,1)$ is the unique (up to factor) nontrivial
$\mathrm{SL}_3\,\mathbb{C}$-equivariant map with the property that it is homogeneous of degree $2$ in both factors
of its domain (cf. Appendix A (67) for the explicit description). We put $\Gamma : =\zeta (\cdot , h_0) : V(0,4)
\to V(1,1)$. 

\end{itemize}
\end{definition}
Let us state explicitly what we are heading towards:
\begin{quote}
The affine cone $Z$ over the birational modification $\overline{\Psi (X)}$ of our $(\mathrm{SL}_3\, \mathbb{C} ,
\mathrm{SO}_3\,\mathbb{C})$-section $X\subset \mathbb{P} V_0\subset \mathbb{P} V(0,4)$ (whose closure in
$\mathbb{P}V(0,4)$ was seen to be an irreducible component of an algebraic set defined by $15$ \emph{cubic}
equations) has the following wonderful properties: $Z$ lies in $L$, the linear map $\epsilon (\cdot , h_0) : V(0,4)
\to V(2,2)$ restricts to an $\mathrm{SO}_3\mathbb{C}$-equivariant isomorphism between $V(0,4)$ and $L$, and if,
via this isomorphism, we transport $Z$ into $V(0,4)$ and call this $Y$, then the equations for $Y$ are given by
$\Gamma$! More precisely, $Y$ is the unique irreducible component of $\Gamma^{-1}(0)$ passing through the point
$h_0^2$, and $\Gamma$ maps $V(0,4)$ into a five-dimensional $\mathrm{SO}_3\,\mathbb{C}$-invariant subspace of
$V(1,1)$! 
\end{quote}
Thus, if we have carried out this program, $Y$ (or $Z$) will be proven to be an irreducible component of an
algebraic set defined by $5$ \emph{quadratic equations}! This seems quite miraculous, but a satisfactory
explanation why this happens probably requires an answer to the problem raised in remark 2.3.4.\\
We start with some preliminary observations: It is clear that $Z\subset L$ and $\mathbb{C} (\mathbb{P}
V(0,4))^{\mathrm{SL}_3\,\mathbb{C}} \simeq \mathbb{C} (Z)^{\mathrm{SO}_3\,\mathbb{C} \times \mathbb{C}^{\ast}}$,
$\mathbb{C}^{\ast }$ acting by homotheties. In the following, we need the decomposition into irreducibles of
$\mathrm{SL}_3\,
\mathbb{C}$-modules such as $V(2,2)$, $V(2,1)$, $V(1,1)$ and $V(0,4)$ as $\mathrm{SO}_3\,\mathbb{C}$-modules. The
patterns according to which irreducible representations of a complex semi-simple algebraic group decompose when
restricted to a smaller semi-simple subgroup are generally known as \emph{branching rules}. In our case the answer
is
\begin{gather}
V(2,2)= V(2,2)_8 \oplus V(2,2)_6 \oplus V(2,2)_4 \oplus V(2,2)'_4 \oplus V(2,2)_0\, ,\\
V(2,1)=V(2,1)_6 \oplus V(2,1)_4 \oplus V(2,1)_2 \, ,\\
V(1,1)=V(1,1)_4\oplus V(1,1)_2 \, , \\
V(0,4)=V(0,4)_8\oplus V(0,4)_4 \oplus V(0,4)_0\, .
\end{gather}
Here the subscripts indicate the numerical label of the highest weight of the respective
$\mathrm{SO}_3\,\mathbb{C}$-submodule of the ambient $\mathrm{SL}_3\,\mathbb{C}$-module under consideration.
Note also that $\mathrm{SO}_3\,\mathbb{C}\simeq \mathrm{PSL}_2\,\mathbb{C}$, so we are really back in the much
classically studied theory of \emph{binary forms}. It is not difficult (and fun) to check (12), (13), (14), (15) by
hand; let us briefly digress on how this can be done (cf. \cite{Fu-Ha}):\\
We fix the following notation. Let first $n=2l+1$ be an odd integer, $\mathfrak{g}=\mathfrak{sl}_n \,\mathbb{C}$
the Lie algebra of $\mathrm{SL}_n \,\mathbb{C}$, and let $\mathfrak{t}_{\mathfrak{g}}$ its standard torus of
diagonal matrices of trace $0$, and define the standard weights $\epsilon_i \in
\mathfrak{t}_{\mathfrak{g}}^{\vee}$, $i=1, \dots , n$, by $\epsilon_i (\mathrm{diag}(x_1, \dots , x_n)):= x_i$.
Inside $\mathfrak{g}$ we find $\mathfrak{h}:=\mathfrak{so}_n\,\mathbb{C}$ defined by
\begin{gather*}
\mathfrak{h}:=\left\{ \left( \begin{array}{ccc} X & Y & U \\ Z & -X^t & V \\ -V^t & -U^t & 0 \end{array}\right) \,
 |
\, X,\: Y,\: Z \in\mathfrak{gl}_l\,\mathbb{C} , \: Y^t=-Y^t ,\right. \\ \left. \: Z=-Z^t ,\: U, V\in\mathbb{C}^l
\right\}
\, .
\end{gather*}
Then $\mathfrak{t}_{\mathfrak{h}}:=\{ \mathrm{diag}(x_1, \dots , x_l , -x_1, \dots , -x_l, 0)\, | \, x_i\in\mathbb{C}
\}$; by abuse of notation we denote the restrictions of the functions $\epsilon_i$ to
$\mathfrak{t}_{\mathfrak{h}}$ by the same letters. The fundamental weights of $\mathfrak{g}$ are $\pi_i:=
\epsilon_1+\dots +\epsilon_i$, $i=1, \dots , n-1$, the fundamental weights of $\mathfrak{h}$ are
$\omega_i:=\epsilon_1+\dots +\epsilon_i$, ($1\le i\le l-1$) and $\omega_l:=(\epsilon_1+\dots +\epsilon_l)/2$. Let
$\Lambda_{\mathfrak{g}}$ and $\Lambda_{\mathfrak{h}}$ be the corresponding weight lattices.
$\Lambda_{\mathfrak{g}}^{+}$ and
$\Lambda_{\mathfrak{h}}^{+}$ are the dominant weights. For $\mathfrak{g}$ (and similarly for $\mathfrak{h}$) an
irreducible representation $V(\lambda )$ for $\lambda \in\Lambda_{\mathfrak{g}}^{+}$ comes with its
\emph{formal character}
\begin{gather*}
\mathrm{ch}_{\lambda }:=\sum_{\mu\in\Pi (\lambda )} m_{\lambda }(\mu ) e(\mu) \in\mathbb{Z} [\Lambda_{\mathfrak{g}}
]
\, ,
\end{gather*} an element of the group algebra $\mathbb{Z}[\Lambda_{\mathfrak{g}} ]$ generated by the symbols $e
(\lambda)$ for
$\lambda\in\Lambda_{\mathfrak{g}}$, where $\Pi (\lambda )$ means the weights of $V(\lambda )$, and $m_{\lambda
}(\mu )$ is the dimension of the weight space corresponding to $\mu$ in $V(\lambda )$. We have a formal character
$\mathrm{ch}_V$ for any finite-dimensional $\mathfrak{g}$-module $V=V(\lambda_1)\oplus \dots \oplus V(\lambda_t)$,
$\lambda_1 , \dots , \lambda_t\in \Lambda^{+}_{\mathfrak{g}}$ defined by
\begin{gather*}
\mathrm{ch}_{V}:=\sum_{i=1}^{t} \mathrm{ch}_{\lambda_i} \, .
\end{gather*}
The important point is that $V$ (i.e. its irreducible constituents) can be recovered from the formal character
$\mathrm{ch}_V$, meaning that in $\mathbb{Z}[\Lambda_{\mathfrak{g}}]$ we can write $\mathrm{ch}_V$ uniquely as a
$\mathbb{Z}$-linear combination of characters corresponding to dominant weights
$\lambda\in\Lambda^{+}_{\mathfrak{g}}$.\\
We go back to the case $l=1$, $n=3$. We have
$\mathfrak{h}=\mathfrak{so}_3\,\mathbb{C}=\mathfrak{sl}_2\,\mathbb{C}$. The character $\mathrm{ch}_{V(a)}$ of the
irreducible $\mathfrak{so}_3\,\mathbb{C}$-module
$V(a):=V(a\omega_1)$ is not hard: The weights of $V(a)$ are
\begin{gather*}
-a\omega_1 , \: (-a+2)\omega_1 , \: \dots , \: (a-2)\omega_1 , \: a\omega_1 \, 
\end{gather*}
(all multiplicities are $1$). It remains to understand the weights and their multiplicities in the irreducible
$\mathfrak{g}=\mathfrak{sl}_3\,\mathbb{C}$-module $V(a,b):=V(a\pi_1+ b\pi_2)$. In fact noting that $\pi_1$
restricted to the diagonal torus of $\mathfrak{so}_3\,\mathbb{C}$ above is $2\omega_1$, and the restriction of
$\pi_2$ is $0$, we see that, once we know the formal character of $V(a,b)$ as
$\mathfrak{sl}_3\,\mathbb{C}$-module, we simply substitute $2\omega_1$ for $\pi_1$ and $0$ for $\pi_2$ in the
result and obtain in this way the formal character of the $\mathfrak{so}_3\,\mathbb{C}$\emph{-module} $V(a,b)$,
and hence its decomposition into irreducible constituents as $\mathfrak{so}_3\,\mathbb{C}$-module.\\
Let us assume $a\ge b$ (otherwise pass to the dual representation); we describe the weights and their
multiplicities of the $\mathfrak{sl}_3\,\mathbb{C}$-module $V(a,b)$ following \cite{Fu-Ha}, p. 175ff.: Imagine a
plane with a chosen origin from which we draw two vectors of unit length, representing $\pi_1$ and $\pi_2$, such
that the angle measured counterclockwise from $\pi_1$ to $\pi_2$ is $60^{\circ}$. Thus the points of the lattice
spanned by $\pi_1$, $\pi_2$ are the vertices of a set of equilateral congruent triangles which gives a tiling of
the plane.\\
The weights of $V(a,b)$ are the lattice points which lie on the edges of a sequence of $b$ (not necessarily
regular) hexagons $H_i$ with vertices at lattice points, $i=0,\dots , b-1$, and a sequence of
$[(a-b)/3]+1$ triangles $T_j$, $j=0,\dots ,[(a-b)/3]$. The $H_i$ and $T_j$ are concentric around the origin, and
$H_i$ has one vertex at $(a-i)\pi_1 + (b-i)\pi_2$, $T_j$ has one vertex at the point $(a-b-3j)\pi_1$, and $H_i$
and $T_j$ are otherwise determined by the condition that the lines through $\pi_1$, $\pi_2$, $\pi_2-\pi_1$ are
axes of symmetry for them, i.e. they are preserved by the reflections in these lines (one should make a picture
now).\\ The multiplicities of the weights obtained in this way are as follows: Weights lying on $H_i$ have
multiplicity
$i+1$, and weights lying on one of the $T_j$ have multiplicity $b$. This completely determines the formal
character of $V(a,b)$.\\
Let us look at $V(2,2)$ for example. Here we get three concentric regular hexagons (one of them is degenerate and
consists of the origin alone). The weights are thus:
\begin{gather*}
2\pi_1+2\pi_2, \: 3\pi_2 , \: -2\pi_1+4\pi_2 , \: -3\pi_1+3\pi_2 , \: -4\pi_1+2\pi_2 ,\: -3\pi_1 ,\\ \:
-2\pi_1-2\pi_2 , \: -3\pi_2 , \: 2\pi_1-4\pi_2 , \: 3\pi_1-3\pi_2 , \: 4\pi_1-2\pi_2 , \: 3\pi_1
\end{gather*}
(these are the ones on the outer hexagon, read counterclockwise, and have multiplicity one),
\begin{gather*}
\pi_1+\pi_2 , \: -\pi_1+2\pi_2 , \: -2\pi_1+\pi_2 , \: -\pi_1-\pi_2 , \: \pi_1-2\pi_2 , \: 2\pi_1-\pi_2
\end{gather*}
(these lie on the middle hexagon and have multiplicity two), and finally there is $0$ with multiplicity $3$
corresponding to the origin. Consequently, the formal character of $V(2,2)$ as a representation of
$\mathfrak{so}_3\,\mathbb{C}$ is
\begin{gather*}
e(-8\omega_1 )+ 2 e(-6\omega_1)+4e(-4\omega_1) + 4e(-2\omega_1) + 5 e(0\omega_1) \, ,\\
+4e(2\omega_1 )+ 4e(4\omega_1)+2e(6\omega_1) + e(8\omega_1)
\end{gather*}
which is equal to $\mathrm{ch}_{V(8)}+\mathrm{ch}_{V(6)}+2\mathrm{ch}_{V(4)}+\mathrm{ch}_{V(0)}$. This proves
(12), and (13), (14) and (15) are similar.\\
We resume the discussion of the main content of subsection 2.4. Before stating the main theorem, we collect some
preliminary facts in the following lemma.
\begin{lemma}
\begin{itemize}
\item[(1)]
The following deccomposition of $L\subset V(2,2)$ as $\mathrm{SO}_3\,\mathbb{C}$-subspace of $V(2,2)$ holds
(possibly after interchanging the roles of $V(2,2)_4$ and $V(2,2)_4'$):
\begin{gather*}
L = V(2,2)_8\oplus V(2,2)_4 \oplus V(2,2)_0\: .
\end{gather*}
\item[(2)]
The map $\epsilon (\cdot , h_0) : V(0,4) \to V(2,2)$ is an $\mathrm{SO}_3$-equivariant isomorphism onto $L$.
\item[(3)]
Putting $Y:=\epsilon (\cdot , h_0)^{-1}(Z)\subset V(0,4)$, we have $h_0^2\in Y$.
\item[(4)]
One has $\Gamma (V(0,4))\subset V(1,1)_4\subset V(1,1)$, and the inclusion $Y \subset \Gamma^{-1}(0)$ holds.
\end{itemize}
\end{lemma}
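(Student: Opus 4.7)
The plan is to use $\mathrm{SO}_3\,\mathbb{C}$-equivariance together with the branching rules (12)--(15) wherever possible, and fall back on the explicit Appendix A formulas only when Schur's lemma does not suffice.

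\emph{Parts (1) and (3).} Since $h_0=xz-y^2$ is $\mathrm{SO}_3\,\mathbb{C}$-invariant, $\Phi(\cdot,h_0):V(2,2)\to V(2,1)$ is $\mathrm{SO}_3\,\mathbb{C}$-equivariant. By (12), (13) and Schur, $V(2,2)_8$ and $V(2,2)_0$ lie in $L=\ker\Phi(\cdot,h_0)$ automatically, since no matching isotypic summand exists in $V(2,1)$; on $V(2,2)_6\oplus V(2,2)_4\oplus V(2,2)_4'$ the map is determined by three scalars. An explicit check with formula (65) shows the scalar on $V(2,2)_6$ and on precisely one of the two label-$4$ summands is nonzero; after relabeling so that the surviving copy is called $V(2,2)_4$, we obtain (1). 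For (3), Theorem 2.3.3(1) gives $\ker\Theta(h_0^2,\cdot)=\mathbb{C}h_0$ and hence $h_0^2\in X$, so $\Psi(h_0^2)\in Z$. Both $\Psi(h_0^2)$ and $\epsilon(h_0^2,h_0)$ are $\mathrm{SO}_3\,\mathbb{C}$-invariant vectors in $V(2,2)$, hence live in the $1$-dimensional space $V(2,2)^{\mathrm{SO}_3\,\mathbb{C}}=V(2,2)_0$; by Theorem 2.3.3(4), $\Psi(h_0^2)\neq 0$, so the two vectors are proportional, and the cone property of $Z$ gives $\epsilon(h_0^2,h_0)\in Z$, i.e.\ $h_0^2\in Y$.

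\emph{Part (2).} The $\mathrm{SO}_3\,\mathbb{C}$-equivariant map $\epsilon(\cdot,h_0)$ sends $V(0,4)_8\to V(2,2)_8$ and $V(0,4)_0\to V(2,2)_0$ by Schur; the only question is whether $\epsilon(V(0,4)_4,h_0)$ lands in the correct copy of $V(4)$ inside $V(2,2)_4\oplus V(2,2)_4'$. A direct computation from formulas (65), (66) verifying the identity $\Phi(\epsilon(f,h_0),h_0)\equiv 0$ on $V(0,4)$ identifies this image with $V(2,2)_4\subset L$ and shows $\epsilon(\cdot,h_0)(V(0,4))\subset L$. Since $\dim V(0,4)=15=\dim L$, it suffices to check injectivity, and by Schur this reduces to the nonvanishing of one scalar per isotypic component $V(0,4)_k$, $k\in\{8,4,0\}$, which is immediate by evaluating $\epsilon(\cdot,h_0)$ on a highest-weight vector of each.

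\emph{Part (4).} The inclusion $\Gamma(V(0,4))\subset V(1,1)_4$ is pure Schur: by (14), (15) the only isotypic label shared by $V(0,4)$ and $V(1,1)$ is $4$, so $\Gamma$ kills $V(0,4)_8$ and $V(0,4)_0$. The inclusion $Y\subset\Gamma^{-1}(0)$ is the deeper half and the principal obstacle. Transporting through the iso of (2), it asserts that the $\mathrm{SO}_3\,\mathbb{C}$-equivariant quadratic map $\tilde{\Gamma}:L\to V(1,1)_4$, $g\mapsto\zeta(\epsilon(\cdot,h_0)^{-1}(g),h_0)$, vanishes on $Z$. Note that $\Psi(X)\subset L$, because $f\in X$ implies $\Theta(f,h_0)=\Phi(\Psi(f),h_0)=0$ and hence $\Psi(f)\in L$; since $Z$ is the cone over $\overline{\Psi(X)}$, the problem reduces to checking $\zeta(\epsilon(\cdot,h_0)^{-1}(\Psi(f)),h_0)=0$ for generic $f\in X$. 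At $f=h_0^2$ this is automatic, as $\zeta(h_0^2,h_0)\in V(1,1)^{\mathrm{SO}_3\,\mathbb{C}}=0$; but propagating this vanishing across the $9$-dimensional $X$ seems to require a direct polynomial computation with the explicit forms (64), (66), (67). This computational verification, rather than any representation-theoretic obstruction, is what I expect to be the main technical hurdle.
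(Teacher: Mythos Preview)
Your treatment of (1), (2), and (3) is essentially the paper's, though for (2) the paper avoids the direct computation of $\Phi(\epsilon(f,h_0),h_0)\equiv 0$ by observing the stronger $\mathrm{SL}_3$-fact that $(f,g)\mapsto\Phi(\epsilon(f,g),g)$ is identically zero because $V(2,1)$ does not occur in $V(0,4)\otimes\mathrm{Sym}^2 V(0,2)$.

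Your argument for (4), however, has a genuine gap in the first half and misses the key idea in the second. The map $\Gamma=\zeta(\cdot,h_0)$ is \emph{quadratic}, not linear, so Schur's lemma applied to the isotypic pieces of $V(0,4)$ is not the right object: the relevant linear $\mathrm{SO}_3$-map is $\mathrm{Sym}^2 V(0,4)\to V(1,1)$, and one must check that $\mathrm{Sym}^2 V(0,4)$ contains no $\mathrm{SO}_3$-summand with label~$2$. Your sentence ``$\Gamma$ kills $V(0,4)_8$ and $V(0,4)_0$'' is meaningless for a quadratic map (vanishing on two vectors says nothing about their sum), and even formally the conclusion you draw concerns the domain rather than the target.

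For the second half of (4), the paper's argument is not a brute-force polynomial check but an $\mathrm{SL}_3$-equivariance trick you overlooked: since $V(1,1)$ occurs with multiplicity one in $\mathrm{Sym}^2 V(0,4)\otimes\mathrm{Sym}^2 V(0,2)$, one has $c\cdot\zeta=\tilde\gamma\circ\epsilon$ for the unique quadratic $\mathrm{SL}_3$-map $\tilde\gamma:V(2,2)\to V(1,1)$ (and some $c\neq 0$, checked on one value). Then $\Gamma$ vanishes on $Y$ exactly when $\tilde\gamma$ vanishes on $Z$, which reduces to $\tilde\gamma\circ\Psi\equiv 0$; and this last identity is automatic because it is an $\mathrm{SL}_3$-equivariant degree-$6$ map $V(0,4)\to V(1,1)$, while $V(1,1)$ does not occur in $\mathrm{Sym}^6 V(0,4)$. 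The point is to work $\mathrm{SL}_3$-equivariantly (where the decomposition is much sparser) rather than $\mathrm{SO}_3$-equivariantly, so no explicit computation on the $9$-dimensional $X$ is needed.
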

\begin{proof}
(1): Using the explicit form of $\Phi$ one calculates that the dimension of the image of the 
$\mathrm{SO}_3\,\mathbb{C}$-equivariant map $\Phi (\cdot , h_0) : V(2,2) \to V(2,1)$ is $12$. Thus, in view of the
decomposition (13) of $V(2,1)$ as $\mathrm{SO}_3\,\mathbb{C}$-representation, we must have $\Phi (V(2,2) , h_0)
=V(2,1)_6\oplus V(2,1)_4$. Since
\begin{gather}
\dim V(a,b)=\frac{1}{2}(a+1)(b+1)(a+b+2)\, ,
\end{gather}
the dimension of $V(2,2)$ is $27$ and the kernel $L$ of $\Phi (\cdot , h_0)$ has dimension $15$; in fact,
$V(2,2)_8$, $V(2,2)_0$ and (after possibly exchanging $V(2,2)_4$ and $V(2,2)_4'$) $V(2,2)_4$ must all be in the
kernel, since these representations do not appear in the decomposition of the image.\\
(2): Using the explicit form of $\epsilon$ given in Appendix A (66), one calculates that the dimension of the
image of $\epsilon (\cdot , h_0)$ is $15$ whence this linear map is injective. Moreover, its image is contained in
$L$, hence equals $L$, because the map $V(0,4)\times V(0,2) \to V(2,1)$ given by $(f,g)\mapsto \Phi (\epsilon
(f,g), g)$ is identically zero since there is no $V(2,1)$ in the decomposition of $V(0,4)\otimes\mathrm{Sym}^2\,
V(0,2)$.\\
(3): As we saw in theorem 2.3.3 (1), $\mathbb{C}h_0^2 \in X$, and we have $0\neq \Psi (h_0^2)\in Z$. From the
decomposition (12), we get, $\Psi (h_0^2)$ being invariant, $\langle \Psi (h_0^2)
\rangle_{\mathbb{C}}=L^{\mathrm{SO}_3\,\mathbb{C}}$. By the decomposition (15), we get that the preimage under
$\epsilon (\cdot ,h_0)$ of $\Psi (h_0^2)$ spans the $\mathrm{SO}_3\,\mathbb{C}$-invariants $V(0,4)_0$ which are
thus in $Y$. So in particular, $h_0^2\in Y$.\\
(4): The first part is straightforward: Just decompose $\mathrm{Sym}^2\, V(0,4)$ as
$\mathrm{SO}_3\,\mathbb{C}$-module by the methods explained above, and check that it does not contain any
$\mathrm{SO}_3\,\mathbb{C}$-submodule the highest weight of which has numerical label $2$ (this suffices by (14)).
The second statement of (4) follows from the observation that the map $\zeta : V(0,4) \times V(0,2) \to V(1,1)$
(Def. 2.4.1 (4)) factors:
\begin{gather*}
c \cdot\zeta = \tilde{\gamma}\circ \epsilon \, , \: c\in\mathbb{C}^{\ast}\, ,
\end{gather*}
where $\tilde{\gamma} : V(2,2) \to V(1,1)$ is the unique (up to nonzero scalar) non-trivial
$\mathrm{SL}_3\,\mathbb{C}$-equivariant map which is homogeneous of degree $2$. This is because $V(1,1)$ occurs in
the decomposition of $\mathrm{Sym}^2\, V(0,4)\otimes \mathrm{Sym}^{2}\, V(0,2)$ with multiplicity one, and
$\tilde{\gamma}\circ \epsilon$ is not identically zero, as follows from the explicit form of these maps (cf.
Appendix A, (66), (68)). Thus, defining $\tilde{\Gamma} : V(0,4)
\to V(1,1)$ by $\tilde{\Gamma}(\cdot ):=(\tilde{\gamma}\circ \epsilon )(\cdot ,h_0)$ (which thus differs from
$\Gamma $ just by a nonzero scalar), we must show $\tilde{\Gamma }(Y)=0$. But recalling the definitions of $Y$,
$\tilde{\Gamma}$ and $Z$ (Def. 2.4.1 (1)), it suffices to show that $\tilde{\gamma}\circ \Psi $ is identically
zero; the latter is true since it is an $\mathrm{SL}_3\,\mathbb{C}$-equivariant map from $V(0,4)$ to $V(1,1)$,
homogeneous of degree $6$, but $\mathrm{Sym}^6\, V(0,4)$ does not contain $V(1,1)$. This proves (4).
\end{proof}
Let us now pass from $\mathrm{SO}_3\,\mathbb{C}$ to the $\mathrm{PSL}_2\,\mathbb{C}$-picture and denote by $V(d)$
the space of binary forms of degree $d$ in the variables $z_1$, $z_2$. This is of course consistent with our
previous notation since, under the isomorphism $\mathfrak{so}_3\,\mathbb{C}\simeq \mathfrak{sl}_2\,\mathbb{C}$,
$V(d)$ is just the irreducible $\mathfrak{so}_3\,\mathbb{C}$-module the highest weight of which has numerical
label $d$; since we consider $\mathrm{PSL}_2\,\mathbb{C}$-representations, $d$ is always even.\\
We will fix a covering $\mathrm{SL}_2\,\mathbb{C} \to \mathrm{SO}_3\,\mathbb{C}$ and thus an isomorphism
$\mathrm{PSL}_2\,\mathbb{C}$ $\simeq\mathrm{SO}_3\,\mathbb{C}$, and we will fix isomorphisms $\delta_1 : V(0)\oplus
V(4)
\oplus V(8) \to V(0,4)$ and $\delta_2 : V(4) \to V(1,1)_4$ such that $(1,0,0)$ maps to $h_0^2$ under $\delta_1$
and both $\delta_1$ and $\delta_2$ are equivariant with respect to the isomorphism
$\mathrm{PSL}_2\,\mathbb{C}\simeq\mathrm{SO}_3\,\mathbb{C}$; we will discuss in a moment how this is done, but
for now this is not important. Look at the diagram\\
\vspace{0.5cm}
\setlength{\unitlength}{1cm}
\begin{picture}(8,6)
\put(8,4){$V(0)\oplus V(4)\oplus V(8)$}
\put(0,4){$Y\subset \Gamma^{-1}(0)\subset V(0,4)$}
\put(7,4){\vector(-1,0){2}}
\put(8.7,1.5){$V(4)$}
\put(7,1.6){\vector(-1,0){2}}
\put(1.9,1.5){$0\in V(1,1)_4$}
\put(3,0.8){$\cap$}
\put(2.8,0.3){$V(1,1)\simeq V(1,1)_4\oplus V(1,1)_2$}
\put(1.9,3.5){\vector(0,-1){1.5}}
\put(3.2,3.5){\vector(0,-1){1.5}}
\put(8.9,3.5){\vector(0,-1){1.5}}

\put(6,3.5){$\delta_1$}
\put(6,4.1){$\simeq$}
\put(6,1.2){$\delta_2$}
\put(6,1.7){$\simeq$}
\put(8.7, 2.8){$\delta :=\delta_2^{-1}\circ\Gamma\circ\delta_1$}
\put(0.5,2.8){$\Gamma |_{\Gamma^{-1}(0)}$}
\put(3.5,2.8){$\Gamma$}

\put(9.8,4.5){$\cap$}
\put(9.8,5){$U:=\delta_1^{-1}(Y)$}

\put(7.6,4.7){$(1,0,0)$}
\put(0,4.7){$h_0^2$}
\put(7,4.8){\vector(-1,0){6}}
\put(7,4.7){\line(0,1){0.2}}
\put(4,5){$\delta_1$}
\end{picture}\\
By part (4) of lemma 2.4.2, we have $\delta^{-1}(0)\supset U$, and by part (3) of the same lemma, $(1,0,0)\in U$.
Moreover, recalling our construction of $X$ in theorem 2.3.3, we see that $\dim X=\dim \mathbb{P}\, V(0,4) -\dim
\mathbb{P}\, V(0,2)=14-5=9$, whence, chasing through the definitions of $Z$, $Y$, $U$, we get $\dim U=10$. But the
explicit form of $\delta$ (we will see this in a moment) allows us to conclude, by explicit calculation of the
rank of the differential of $\delta$ at the invariant point $(1,0,0)$, that $\dim\, T_{(1,0,0)}\, U =10$, whence
$T_{(1,0,0)}\, U= V(0)\oplus V(8)$. Therefore, as $U$ is irreducible, it is the unique component of the (possibly
reducible) variety $\delta^{-1}(0)$ passing through $(1,0,0)$. Moreover, it is clear the condition $\{\delta =0\}$
amounts to $5$ quadratic equations! We have proven
\begin{theorem}
There is an isomorphism
\begin{gather}
\mathbb{C}(\mathbb{P}\, V(0,4))^{\mathrm{SL}_3\,\mathbb{C}}\simeq \mathbb{C}
(U)^{\mathrm{PSL}_2\,\mathbb{C}\times\mathbb{C}^{\ast}}
\end{gather}
where
\begin{gather*}
\delta : V(0)\oplus V(4) \oplus V(8) \to V(4)
\end{gather*}
is $\mathrm{PSL}_2$-equivariant and homogeneous of degree $2$, and 
 $U$ is the unique irreducible component of $\delta^{-1}(0)$ passing through $(1,0,0)$. Moreover, $\dim U=10$
and $T_{(1,0,0)}\, U = V(0)\oplus V(8)$.
\end{theorem}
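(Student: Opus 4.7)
The plan is to assemble the isomorphism as a chain, using facts already established in the excerpt, and to conclude with an explicit tangent space computation at the invariant point $(1,0,0)$. First, by Proposition 2.2.2 combined with Proposition 2.1.2(1), restriction of invariants gives $\mathbb{C}(\mathbb{P} V(0,4))^{\mathrm{SL}_3\mathbb{C}} \simeq \mathbb{C}(X)^{\mathrm{SO}_3\mathbb{C}}$; Theorem 2.3.3(4) says $\Psi$ maps $X$ birationally onto $\overline{\Psi(X)}$, and passing to the affine cone $Z$ over $\overline{\Psi(X)}$ introduces the $\mathbb{C}^{\ast}$ of homotheties, giving $\mathbb{C}(X)^{\mathrm{SO}_3\mathbb{C}} \simeq \mathbb{C}(Z)^{\mathrm{SO}_3\mathbb{C}\times\mathbb{C}^{\ast}}$. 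By Lemma 2.4.2(1)--(2), $Z \subset L$ and $\epsilon(\cdot,h_0)$ is an $\mathrm{SO}_3\mathbb{C}$-equivariant linear isomorphism $V(0,4) \to L$; pulling back $Z$ yields $Y \subset V(0,4)$ with $\mathbb{C}(Z)^{\mathrm{SO}_3\mathbb{C}\times\mathbb{C}^{\ast}} \simeq \mathbb{C}(Y)^{\mathrm{SO}_3\mathbb{C}\times\mathbb{C}^{\ast}}$. Using the fixed equivariant isomorphisms $\delta_1$ and $\delta_2$, transfer $Y$ to $U := \delta_1^{-1}(Y)$ and $\Gamma$ to $\delta := \delta_2^{-1}\circ\Gamma\circ\delta_1$; the resulting identification of invariant function fields is the displayed isomorphism of the theorem.

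The structural properties of $\delta$ and $U$ then follow at once. $\mathrm{PSL}_2\mathbb{C}$-equivariance and degree-$2$ homogeneity of $\delta$ are inherited from $\Gamma$ via $\zeta$ (Definition 2.4.1(4)) and the equivariance of $\delta_1,\delta_2$. Lemma 2.4.2(4) shows that $\Gamma$ takes values in the $5$-dimensional subspace $V(1,1)_4 \simeq V(4)$, so $\delta$ indeed has target $V(4)$, the zero locus $\delta^{-1}(0)$ is cut out by $5$ quadratic equations, and $U \subset \delta^{-1}(0)$. Lemma 2.4.2(3) provides $h_0^2 \in Y$, which the normalization of $\delta_1$ sends to $(1,0,0) \in U$.

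It remains to prove that $U$ is the \emph{unique} component of $\delta^{-1}(0)$ through $(1,0,0)$ and to identify its tangent space there. From the surjectivity of $d(f_{c_0,0})_x$ on $X$ (Proposition 2.2.2) one gets $\dim X = \dim \mathbb{P} V(0,4) - \dim \mathbb{P} V(0,2) = 14 - 5 = 9$, hence $\dim Z = \dim Y = \dim U = 10$. The plan is now to compute the Jacobian $d\delta_{(1,0,0)} : V(0)\oplus V(4)\oplus V(8) \to V(4)$ directly from the explicit formulas for $\epsilon$ and $\zeta$ in Appendix A, and to verify that it is surjective with kernel $V(0)\oplus V(8)$. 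By $\mathrm{PSL}_2\mathbb{C}$-equivariance and Schur's lemma the differential at the fixed point $(1,0,0)$ respects the isotypic decomposition of the source, so the map vanishes on $V(0)$ and $V(8)$ for representation-theoretic reasons and the task reduces to checking that a single scalar, governing the induced map $V(4)\to V(4)$, is nonzero. Once this is done, $\dim T_{(1,0,0)}\delta^{-1}(0) = 10 = \dim U$ forces the irreducible $U$ to be the unique component of $\delta^{-1}(0)$ through $(1,0,0)$, and $T_{(1,0,0)}U = V(0)\oplus V(8)$ follows.

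The one genuine obstacle is this explicit rank check, which is precisely where the "miracle" announced before the theorem is cashed in: absent the geometric interpretation of $\Psi$ sought in Remark 2.3.4, it does not seem possible to avoid a direct calculation with the Appendix~A formulas. Everything else in the theorem is a bookkeeping assembly of the preceding material.
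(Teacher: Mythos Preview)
Your proposal is correct and follows essentially the same route as the paper: assemble the chain of field isomorphisms from the earlier results, then verify uniqueness of the component and the tangent space by computing the rank of $d\delta$ at $(1,0,0)$. Your use of Schur's lemma to see a priori that $d\delta_{(1,0,0)}$ kills $V(0)\oplus V(8)$ and reduces to a single scalar on $V(4)$ is a tidy refinement of the paper's bare ``explicit calculation of the rank,'' but the overall architecture is the same.
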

We close this section by describing the explicit form of the covering
$\mathrm{SL}_2\,\mathbb{C}\to\mathrm{SO}_3\,\mathbb{C}$ and the maps $\delta_1$, $\delta_2$, and by making some
remarks on transvectants and the final formula for the map $\delta$.\\
Let $e_1,\: e_2 , \:  e_3$ be the standard basis in $\mathbb{C}^3$, and denote by $x_1,\: x_2, \: x_3$ the dual
basis in $(\mathbb{C}^3)^{\vee }$. In this notation, $h_0^2=x_1x_3-x_2^2$. We may view the $x's$ as coordinates on
$\mathbb{C}^3$ and identify $\mathbb{C}^3$ with the Lie algebra $\mathfrak{sl}_2\,\mathbb{C}$ by assigning to
$(x_1, x_2, x_3)$ the matrix
\begin{gather*}
X=\left( \begin{array}{cc} x_2 & -x_1 \\ x_3 & -x_2 \end{array}\right) \in \mathfrak{sl}_2\,\mathbb{C} \, .
\end{gather*}
Consider the adjoint representation $\mathrm{Ad}$ of $\mathrm{SL}_2\,\mathbb{C}$ on $\mathfrak{sl}_2\,\mathbb{C}$.
Clearly, for $X\in \mathfrak{sl}_2\,\mathbb{C}$, $A\in \mathrm{SL}_2\,\mathbb{C}$, the map $\mathrm{Ad}(A) \, :\,
X\mapsto AXA^{-1}$ preserves the determinant of $X$, which is just our $h_0$; the kernel of $\mathrm{Ad}$ is the
center $\{ \pm 1 \}$ of $\mathrm{SL}_2\,\mathbb{C}$, and since $\mathrm{SL}_2\,\mathbb{C}$ is connected, the image
of 
$\mathrm{Ad}$ is $\mathrm{SO}_3\,\mathbb{C}$. This is how we fix the isomorphism $\mathrm{PSL}_2\,\mathbb{C}\simeq
\mathrm{SO}_3\,\mathbb{C}$ explicitly, and how we view $\mathrm{SO}_3\,\mathbb{C}$ as a subgroup of
$\mathrm{SL}_3\,\mathbb{C}$. Note that the induced isomorphism
$\mathfrak{sl}_2\,\mathbb{C}\to\mathfrak{so}_3\,\mathbb{C}$ on the Lie algebra level can be described as follows:
\begin{gather}
e:=\left( \begin{array}{cc} 0 & 1 \\ 0 & 0 \end{array}\right) \mapsto \left( \begin{array}{ccc} 0 & 2 & 0 \\ 0 & 0
& 1\\ 0 & 0 & 0 \end{array}\right)\, ,\\
f:=\left( \begin{array}{cc} 0 & 0 \\ 1 & 0 \end{array}\right) \mapsto \left( \begin{array}{ccc} 0 & 0 & 0 \\ 1 & 0
& 0\\ 0 & 2 & 0 \end{array}\right)\, , \nonumber \\
h:=\left( \begin{array}{cc} 1 & 0 \\ 0 & -1 \end{array}\right) \mapsto \left( \begin{array}{ccc} 2 & 0 & 0 \\ 0 & 0
& 0\\ 0 & 0 & -2 \end{array}\right) \nonumber
\end{gather}
(where we view $\mathfrak{so}_3\,\mathbb{C}$ as a subalgebra of $\mathfrak{sl}_3\,\mathbb{C}$ in a way consistent
with the inclusion on the group level described above). For example,
\begin{gather*}
\mathrm{ad}\left( \left( \begin{array}{cc} 0 & 1 \\ 0 & 0 \end{array}\right) \right) (X)= \left(
\begin{array}{cc} 0 & 1 \\ 0 & 0 \end{array}\right)\left( \begin{array}{cc} x_2 & -x_1 \\ x_3 & -x_2
\end{array}\right) \\ - \left(
\begin{array}{cc} x_2 & -x_1 \\ x_3 & -x_2 \end{array}\right)\left(
\begin{array}{cc} 0 & 1 \\ 0 & 0 \end{array}\right) = \left( \begin{array}{cc} x_3 & -2x_1 \\ 0 & -x_3
\end{array}\right) \, ,
\end{gather*}
so
\begin{gather*}
\left( \begin{array}{c} x_1\\ x_2\\ x_3 \end{array}\right) \mapsto \left( \begin{array}{ccc} 0 & 2 & 0 \\ 0 & 0 &
1\\ 0 & 0 & 0 \end{array}\right) \left( \begin{array}{c} x_1 \\ x_2 \\ x_3 \end{array}\right)\, .
\end{gather*}
To give the isomorphism $\delta_1 : V(0) \oplus V(4) \oplus V(8) \to V(0,4)$ explicitly, we just have to find 
highest weight vectors inside $V(0)$, $V(4)$, $V(8)$ and corresponding highest weight vectors inside $V(0,4)$. For
example, $h$ acts on $z_2^4 \in V(4)$ by multiplication by $4$, and $z_2^4$ is killed by $e$, so this is a highest
weight vector inside $V(4)$. But if we compute
\begin{gather*}
h\cdot (x_1x_3^3-x_2^2x_3^2) = (h\cdot x_1)x_3^3 + 3 x_1 (h\cdot x_3) x_3^2 - 2 (h\cdot x_2)x_2 x_3^2\\ - 2x_2^2
(h\cdot x_3) x_3 = (-2 x_1)x_3^3 + 3 x_1 (2 x_3) x_3^2 - 2\cdot 0\cdot x_2 x_3^2\\ - 2x_2^2
(2 x_3) x_3=  4(x_1x_3^3-x_2^2x_3^2 ) \, \quad \mathrm{and}\\ 
e\cdot (x_1x_3^3-x_2^2x_3^2) = (e\cdot x_1)x_3^3 + 3 x_1 (e\cdot x_3) x_3^2 - 2 (e\cdot x_2)x_2 x_3^2\\ - 2x_2^2
(e\cdot x_3) x_3 = (-2x_2)\cdot x_3^3 + 3 x_1 \cdot 0\cdot  x_3^2 - 2(-x_3) x_2 x_3^2\\ - 2x_2^2
\cdot 0\cdot  x_3 =0
\end{gather*}
(use (18) and remark that the $x$'s are dual variables, so we have to use the dual action), then we find that a
corresponding highest weight vector for the submodule of $V(0,4)$ isomorphic to $V(4)$ is $x_1x_3^3-x_2^2x_3^2$.
Proceeding in this way, we see that we can define $\delta_1$ uniquely by the requirements:
\begin{gather}
\delta_1 \, : \, 1\mapsto h_0^2 \, , \: z_2^4 \mapsto x_1x_3^3-x_2^2x_3^2 \, , \: z_2^8 \mapsto x_3^4 \, ,
\end{gather}
and using the Lie algebra action and linearity, we can compute the values of $\delta_1$ on a set of basis vectors
in $V(0)\oplus V(4) \oplus V(8)$.\\
To write down $\delta_2$ explicitly, remark that $V(1,1)$ may be viewed as the
$\mathrm{SL}_3\,\mathbb{C}$-submodule of $\mathbb{C}^3\otimes (\mathbb{C}^3)^{\vee}$ consisting of those tensors
that are annihilated by
\begin{gather*}
\Delta := \frac{\partial }{\partial e_1}\otimes \frac{\partial}{\partial x_1} +
\frac{\partial }{\partial e_2}\otimes \frac{\partial}{\partial x_2} +
\frac{\partial }{\partial e_3}\otimes \frac{\partial}{\partial x_3} \, .
\end{gather*}
We take again our highest weight vector $z_2^4 \in V(4)$, and all we have to do is to find a vector in
$\mathbb{C}^3\otimes (\mathbb{C}^3)^{\vee}$ on which $h$ acts by multiplication by $4$ and which is annihilated by
$e$ and $\Delta$. Indeed, $e_1x_3$ is one such. Thus we define $\delta_2$ by 
\begin{gather*}
\delta_2 \, : \, z_2^4 \mapsto e_1x_3 \, .
\end{gather*}
Then it is easy to compute the values of $\delta_2$ on basis elements of $V(4)$ in the same way as for
$\delta_1$.\\
Let us recall the classical notion of \emph{transvectants} ("\"{U}berschiebung " in German). Let $d_1$, $d_2$, $n$
be nonnegative integers such that $0\le n \le \mathrm{min} (d_1 , d_2)$. For $f\in V(d_1)$ and $g\in V(d_2)$ one
puts
\begin{gather}
\psi_n( f,g) := \frac{(d_1-n) !}{d_1 !} \frac{(d_2-n) !}{d_2 !}\sum_{i=0}^n (-1)^i {n \choose i} \frac{\partial^n
f}{ \partial z_1^{n-i}\partial z_2^i} \frac{\partial^n g}{\partial z_1^i \partial z_2^{n-i} }
\end{gather}
(cf. \cite{B-S}, p. 122). The map $(f,g)\mapsto \psi_n(f,g)$ is a bilinear and
$\mathrm{SL}_2\,\mathbb{C}$-equivariant map from $V(d_1)\times V(d_2)$ onto $V(d_1 +d_2 -2n)$. The map
\begin{gather*}
V(d_1)\otimes V(d_2)\to \bigoplus_{n=0}^{\mathrm{min}(d_1 , d_2)} V(d_1+d_2-2n) \\
(f,g) \mapsto \sum_{n=0}^{\mathrm{min}(d_1 , d_2)} \psi_n(f,g) 
\end{gather*}
is an isomorphism of $\mathrm{SL}_2\, \mathbb{C}$-modules ("Clebsch-Gordan decomposition"). Thus transvectants
make the decomposition of $V(d_1)\otimes V(d_2)$ into irreducibles explicit; a similar result for
$\mathrm{SL}_3\,\mathbb{C}$-representations would be very important in several areas of computational invariant
theory and also for the rationality question for moduli spaces of plane curves, but is apparently unknown.\\
The explicit form of $\delta$ that results from the computations is then
\begin{gather}
\delta (f_0 , f_4 , f_8) = -\frac{6}{1225} \psi_6 (f_8 , f_8) + \frac{1}{840} \psi_4(f_8 , f_4)\\ +
\frac{11}{54}\psi_2(f_4 , f_4) - \frac{7}{36} f_4f_0 \, ,\nonumber 
\end{gather}
where $(f_0,f_4,f_8)\in V(0)\oplus V(4)\oplus V(8)$. Note that the fact that $\delta$ turns out to be such a linear
combination of transvectants is no surprise in view of the Clebsch-Gordan decomposition: In fact, $\delta$ may be
viewed as a map
\begin{gather*}
\delta ' : (V(0)\oplus V(4) \oplus V(8)) \otimes (V(0)\oplus V(4) \oplus V(8)) \to V(4)
\end{gather*}
and using the fact that $\delta$ is symmetric and collecting only those tensor products in the preceding formula
for which $V(4)$ is a subrepresentation, we see that $\delta$ comes from a map
\begin{gather*}
\delta '' : (V(0)\otimes V(4)) \oplus (V(4)\otimes V(4)) \\ \oplus (V(8)\otimes V(4)) \oplus (V(8)\otimes V(8)) \to
V(4) \, .
\end{gather*}
Thus it is clear from the beginning that $\delta$ will be a linear combination of $\psi_6$, $\psi_4$, $\psi_2$,
$\psi_0$ as in formula (21), and the actual coefficients are easily calculated once we know $\delta$ explicitly!\\
In fact, the next lemma shows that the actual coefficients of the transvectants $\psi_i$'s occurring in $\delta$
are not very important.
\begin{lemma}
For $\lambda :=(\lambda_0 , \lambda_2 , \lambda_4 , \lambda_6)\in\mathbb{C}^4$ consider the homogeneous of degree
$2$ 
$\mathrm{PSL}_2$-equivariant map
\begin{gather*}
\delta_{\lambda } : V(8)\oplus V(0) \oplus V(4) \to V(4) \\
f_8 + f_0 +f_4 \mapsto \lambda_6 \psi_6(f_8 , f_8) +2\lambda_4\psi_4 (f_8 , f_4) +\lambda_2 \psi_2(f_4, f_4)
+2\lambda_0 f_4f_0 \, .
\end{gather*}
Suppose that $\lambda_0\neq 0$. Then:
\begin{itemize}
\item[(1)]
One has $1\in \delta_{\lambda}^{-1}(0)$ and $T_{1}\, \delta_{\lambda}^{-1}(0)=V(8)\oplus V(0)$; thus there is a
unique irreducible component $U_{\lambda}$ of $\delta_{\lambda}^{-1}(0)$ passing through $1$ on which $1$ is a
smooth point.
\item[(2)]
If furthermore $\lambda\in (\mathbb{C}^{\ast})^4$, then $\mathbb{P} U_{\lambda}$ is
$\mathrm{PSL}_2\,\mathbb{C}$-equivariantly isomorphic to $\mathbb{P} U_{(1 , 6\epsilon , 1, 6)}$ for some
$\epsilon\neq 0$ (depending on $\lambda$).
\end{itemize}
\end{lemma}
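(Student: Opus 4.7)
My plan is as follows. For part (1), the two key observations are that $\delta_\lambda$ is homogeneous of degree $2$, so its derivative at any point $p$ is $v \mapsto 2B_\lambda(p, v)$ where $B_\lambda$ is the associated symmetric bilinear form, and that the only term in $\delta_\lambda$ that is linear in $f_0$ is $2\lambda_0 f_4 f_0$. Plugging in $p = 1 = (0, 1, 0)$ immediately gives $\delta_\lambda(1) = 0$, while $B_\lambda(1, (v_8, v_0, v_4)) = \lambda_0 v_4$. Hence the differential at $1$ is $v \mapsto 2\lambda_0 v_4$, which is surjective onto $V(4)$ because $\lambda_0 \neq 0$, with kernel exactly $V(8) \oplus V(0)$. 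By the implicit function theorem, $\delta_\lambda^{-1}(0)$ is smooth and of codimension $5$ in an analytic neighbourhood of $1$; local regularity implies local irreducibility, so a unique irreducible component $U_\lambda$ passes through $1$, and $T_1 U_\lambda = V(8) \oplus V(0)$.

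For part (2), I would exploit Schur's lemma. Since $V(0)$, $V(4)$, $V(8)$ are pairwise non-isomorphic irreducible $\mathrm{PSL}_2\,\mathbb{C}$-modules, the $\mathrm{PSL}_2\,\mathbb{C}$-equivariant linear automorphisms of $V(8) \oplus V(0) \oplus V(4)$ are exactly the rescalings $\varphi_{a,b,c} : (f_8, f_0, f_4) \mapsto (a f_8, b f_0, c f_4)$ with $(a,b,c) \in (\mathbb{C}^\ast)^3$. A direct substitution yields
\begin{gather*}
\delta_\lambda \circ \varphi_{a,b,c} \;=\; a^2 \lambda_6\, \psi_6(f_8, f_8) + 2 ac\, \lambda_4\, \psi_4(f_8, f_4) + c^2 \lambda_2\, \psi_2(f_4, f_4) + 2 bc\, \lambda_0\, f_4 f_0.
\end{gather*}
To make this equal $d \cdot \delta_{(1, 6\epsilon, 1, 6)}$ for some $d \in \mathbb{C}^\ast$, we match coefficients to obtain the system $a^2 \lambda_6 = 6d$, $ac\, \lambda_4 = d$, $c^2 \lambda_2 = 6\epsilon d$, $bc\, \lambda_0 = d$. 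Normalising $a = 1$, the first, second and fourth equations determine $d = \lambda_6 / 6$, $c = \lambda_6 / (6\lambda_4)$ and $b = \lambda_4/\lambda_0$, while the third forces $\epsilon = \lambda_2 \lambda_6 / (36 \lambda_4^2)$; all of $a, b, c, d, \epsilon$ are nonzero precisely because $\lambda \in (\mathbb{C}^\ast)^4$.

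Since multiplying the defining map by a nonzero scalar does not change its zero set, $\varphi_{a,b,c}$ maps $\delta_\lambda^{-1}(0)$ onto $\delta_{(1, 6\epsilon, 1, 6)}^{-1}(0)$, and it sends $1 \in V(0)$ to $b \cdot 1$, a point on the same projective line. By the uniqueness in part (1), $\varphi_{a,b,c}$ must carry $U_\lambda$ to $U_{(1, 6\epsilon, 1, 6)}$, and projectivisation produces the desired $\mathrm{PSL}_2\,\mathbb{C}$-equivariant isomorphism. I do not anticipate any serious obstacle: both parts reduce to short explicit computations. The only point that needs care is the uniqueness of the smooth component in (1), which should be argued via local regularity (implicit function theorem) rather than from the tangent space count alone. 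In (2), the conceptual content is simply a parameter count: three rescaling parameters and the target scale $d$ give four degrees of freedom, normalising four of the five data $(\lambda_0, \lambda_2, \lambda_4, \lambda_6, \text{target scale})$ and leaving the one modulus $\epsilon$.
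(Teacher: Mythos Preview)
Your proof is correct and follows essentially the same approach as the paper: part (1) is a direct differential computation, and part (2) uses a diagonal rescaling $(f_8,f_0,f_4)\mapsto(\mu_8 f_8,\mu_0 f_0,\mu_4 f_4)$ to normalize three of the four coefficients. The only slip is that from $\delta_\lambda\circ\varphi_{a,b,c}=d\cdot\delta_{(1,6\epsilon,1,6)}$ the map $\varphi_{a,b,c}$ carries $\delta_{(1,6\epsilon,1,6)}^{-1}(0)$ into $\delta_\lambda^{-1}(0)$ rather than the direction you state, but since $\varphi_{a,b,c}$ is invertible this is immaterial.
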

\begin{proof}
Part (1) is a straightforward calculation, and for part (2) we choose complex numbers $\mu_0$, $\mu_4$, $\mu_8$
with the properties $6\mu_8^2=\lambda_6$, $\mu_4\mu_8 =\lambda_4$, $\mu_0\mu_4 =\lambda_0$, and compute $\epsilon$
from $6\epsilon\mu_4^2=\lambda_2$. Then the map from $\mathbb{P} U_{\lambda}$ to $\mathbb{P} U_{(1 , 6\epsilon , 1
,6)}$ given by sending $[ f_0+f_4+f_8]$ to $[ \mu_0 f_0 +\mu_4 f_4 + \mu_8 f_8]$ gives the desired isomorphism.
\end{proof}
In the next section we will see that for any $\epsilon\neq 0$, the $\mathrm{PSL}_2\,\mathbb{C}$-quotient of
$\mathbb{P} U_{(1 , 6\epsilon , 1 , 6)}$ is rational, and so the same holds for $\mathbb{P} U_{\lambda }$ for any
$\lambda\in (\mathbb{C}^{\ast })^4$; note however that the reduction step in lemma 2.4.4 (2) just simplifies the
subsequent calculations, but is otherwise not substantial.

\section{Further sections and inner projections}

\subsection{Binary quartics again and a $(\mathrm{PSL}_2\,\mathbb{C} , \mathfrak{S}_4)$-section}

All the subsequent constructions and calculations depend very much on the geometry of the
$\mathrm{PSL}_2\,\mathbb{C}$-action on the module $V(4)$. In fact, the first main point in the proof that
$\mathbb{P} U_{\lambda} / \mathrm{PSL}_2\,\mathbb{C}$ is rational will be the construction of a
$(\mathrm{PSL}_2\,\mathbb{C} , \mathfrak{S}_4)$-section of this variety ($\mathfrak{S}_4$ being the group of
permutations of $4$ elements); this is done by using proposition 2.1.2 (2) for the projection of
$V(8)\oplus V(0)\oplus V(4)$ to $V(4)$ and producing such a section for $V(4)$ via the concept of \emph{stabilizer
in general position} which we recall next.
\begin{definition}
Let $G$ be  a linear algebraic group $G$ acting on an irreducible variety $X$. A stabilizer in general position
(s.g.p.) for the action of $G$ on $X$ is a subgroup $H$ of $G$ such that the stabilizer of a general point in $X$
is conjugate to $H$ in $G$.
\end{definition}
An s.g.p. (if it exists) is well-defined to within conjugacy, but it need not exist in general; however, for the
action of a reductive group $G$ on an irreducible smooth affine variety, an s.g.p. always exists by results of
Richardson and Luna (cf. \cite{Po-Vi}, \S 7).
\begin{proposition}
For the action of $\mathrm{PSL}_2\,\mathbb{C}$ on $V(4)$, an s.g.p. is given by the subgroup $H$ generated by
\begin{gather*}
\omega := \left[ \left( \begin{array}{cc} 0 & 1 \\ -1 & 0 \end{array} \right) \right] \quad \mathrm{and} \quad 
\rho := \left[ \left( \begin{array}{cc} i & 0 \\ 0 & -i \end{array} \right) \right]\, .
\end{gather*}
$H$ is isomorphic to the Klein four-group $\mathfrak{V}_4\simeq \mathbb{Z}/2\mathbb{Z}
\oplus\mathbb{Z}/2\mathbb{Z}$ and its normalizer $N(H)$ in $\mathrm{PSL}_2\,\mathbb{C}$ is isomorphic to
$\mathfrak{S}_4$; one has $N(H)/H\simeq \mathfrak{S}_3$.\\
More explicitly, $N(H)=\langle \tau , \sigma\rangle $, where, putting $\theta := \mathrm{exp}(2\pi i /8)$, one has
\begin{gather*}
\tau := \left[ \left(\begin{array}{cc} \theta^{-1} & 0 \\ 0 & \theta \end{array}\right) \right]\, , \quad \sigma
:= \left[ \frac{1}{\sqrt{2}} \left( \begin{array}{cc} \theta^3 & \theta^7 \\ \theta^5 & \theta^5
\end{array}\right) \right] \, .
\end{gather*}
\end{proposition}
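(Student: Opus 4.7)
The plan is to identify the generic stabilizer of $f\in V(4)$ via a classical cross-ratio argument, match it with the specified $H$, and then compute $N(H)$ through direct conjugation calculations combined with basic group theory.

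First I would exploit the description of $V(4)$ as binary quartics. Since $-I\in\mathrm{SL}_2\,\mathbb{C}$ acts trivially on forms of even degree, the $\mathrm{PSL}_2\,\mathbb{C}$-stabilizer of a generic $f$ (with four distinct roots $\{z_1,\dots,z_4\}\subset\mathbb{P}^1$) coincides with the group of Möbius transformations permuting the root set, embedding it into $\mathfrak{S}_4$. A permutation $\pi\in\mathfrak{S}_4$ is realized by a Möbius transformation iff it preserves the cross-ratio; the quotient $\mathfrak{S}_3=\mathfrak{S}_4/\mathfrak{V}_4$ acts on the cross-ratio with generic orbit $\{\lambda,\,1-\lambda,\,1/\lambda,\,\lambda/(\lambda-1),\,(\lambda-1)/\lambda,\,1/(1-\lambda)\}$ of size six, so only the double-transposition subgroup $\mathfrak{V}_4$ of $\mathfrak{S}_4$ acts trivially, and for generic $\lambda$ the stabilizer is abstractly $\mathfrak{V}_4$.

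To identify this with the specified $H$, I would compute that $\omega$ and $\rho$ realize the Möbius transformations $z\mapsto -1/z$ and $z\mapsto -z$ respectively; these are commuting involutions with product $z\mapsto 1/z$, so $H\cong\mathfrak{V}_4$. A direct check on monomials shows $V(4)^H=\langle z_1^4+z_2^4,\,z_1^2 z_2^2\rangle$, two-dimensional; since $\dim V(4)^H+\dim\mathrm{PSL}_2\,\mathbb{C}=2+3=5=\dim V(4)$ and a generic such quartic $a(z_1^4+z_2^4)+b z_1^2z_2^2$ has three-dimensional orbit (its roots are $\pm\alpha,\pm 1/\alpha$ with generic cross-ratio), the orbit map $\mathrm{PSL}_2\,\mathbb{C}\times V(4)^H\to V(4)$ is dominant. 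Hence the generic stabilizer contains a conjugate of $H$, and by the cross-ratio bound it equals one; so $H$ is an s.g.p.

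For the normalizer: since $H$ is abelian, $H\subseteq C(H)$; any element of $\mathrm{PSL}_2\,\mathbb{C}$ commuting with the diagonal involution $\rho$ is diagonal or antidiagonal, and further commutation with $\omega$ leaves only the four elements of $H$, so $C(H)=H$ and $N(H)/H\hookrightarrow\mathrm{Aut}(\mathfrak{V}_4)=\mathfrak{S}_3$. Unpacking the given matrices, $\tau$ acts as $z\mapsto -iz$ and $\sigma$ as $z\mapsto -i(z-1)/(z+1)$; then direct Möbius calculation gives $\tau\rho\tau^{-1}=\rho$, $\tau\omega\tau^{-1}=\omega\rho$ and $\sigma\rho\sigma^{-1}=\omega$, $\sigma\omega\sigma^{-1}=\omega\rho$, so $\tau$ realizes a transposition and $\sigma$ a $3$-cycle in $\mathfrak{S}_3$, which together generate it. Thus $N(H)/H\cong\mathfrak{S}_3$ and $|N(H)|=24$. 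The identification $N(H)\cong\mathfrak{S}_4$ then follows either from the Klein classification of finite subgroups of $\mathrm{PSL}_2\,\mathbb{C}$, or from the fact that $N(H)$ is an extension of $\mathfrak{S}_3$ by $\mathfrak{V}_4$ with faithful induced action, which uniquely determines $\mathfrak{S}_4$. Finally $H\subset\langle\tau,\sigma\rangle$ since $\tau^2=\rho$ and $\omega=\sigma\rho\sigma^{-1}$, so $N(H)=\langle\tau,\sigma\rangle$.

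The main obstacle is the explicit verification that $\sigma$ normalizes $H$ and induces an automorphism genuinely distinct from that of $\tau$; this amounts to extracting the Möbius form of $\sigma$ from its given matrix representation and chasing the conjugation formulas, which is routine but requires a careful unpacking of the $\theta$-notation. All other steps reduce to standard facts about cross-ratios and finite subgroups of $\mathrm{PSL}_2\,\mathbb{C}$.
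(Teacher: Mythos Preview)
Your proof is correct and covers all the required claims, but it follows a different path from the paper's argument at two key junctures.

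For the identification of the generic stabilizer, the paper invokes Bogomolov's geometric argument: a general $f\in V(4)$ determines four branch points of a double cover of $\mathbb{P}^1$, which is an elliptic curve; the $2$-torsion subgroup acts on the base preserving the branch locus, and equals the full stabilizer since a general elliptic curve has no complex multiplication. Conjugacy of this $H_f$ to the given $H$ is then argued geometrically via commuting reflections on the Riemann sphere. You instead reach the same conclusion by the cross-ratio argument (only the double transpositions fix $\lambda$) combined with a dimension count on $V(4)^H$ to see that $H$-invariant quartics fill out a dense orbit. Your route is more elementary and avoids the elliptic-curve detour; the paper's route is more conceptual and explains \emph{why} the Klein four-group appears.

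For the structure of $N(H)$, both arguments compute the centralizer $C(H)=H$ and verify the conjugation formulas for $\tau,\sigma$; these parts are essentially identical. The paper then identifies $\langle\tau,\sigma\rangle$ with $\mathfrak{S}_4$ via the presentation $\tau^4=\sigma^3=(\tau\sigma)^2=1$, while you appeal to Klein's classification of finite subgroups of $\mathrm{PSL}_2\,\mathbb{C}$ (or the extension argument). Your alternative via Klein is clean; the extension-theoretic claim that a faithful extension of $\mathfrak{S}_3$ by $\mathfrak{V}_4$ is uniquely $\mathfrak{S}_4$ is true but rests on $H^2(\mathfrak{S}_3,\mathfrak{V}_4)=0$, which you would need to justify if you rely on it alone.
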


\begin{proof}
We will give a geometric proof due to Bogomolov (\cite{Bog1}, p.18). A general homogeneous degree $4$ binary form
$f\in V(4)$ determines a set of $4$ points $\Sigma \subset \mathbb{P}^1$; the double cover of $\mathbb{P}^1$ with
branch points $\Sigma$ is an elliptic curve; it is acted on by its subgroup of $2$-torsion points $H_f\simeq
\mathbb{Z}/2\mathbb{Z}\oplus \mathbb{Z}/2\mathbb{Z}$, and this action commutes with the sheet exchange map, hence
descends to an action of $H_f$ on $\mathbb{P}^1$ which preserves the point set $\Sigma$ and thus the polynomial
$f$; in general $H_f$ will be the full automorphism group of the point set $\Sigma$ since a general elliptic curve
does not have complex multiplication.\\
Let us see that $H_f$ is conjugate to $H$: $H_f$ is generated by two commuting reflections $\gamma_1$, $\gamma_2$
acting on the Riemann sphere $\mathbb{P}^1$ (with two fixed points each). By applying a suitable projectivity, we
see that $H_f$ is conjugate to $\langle \omega , \gamma_2' \rangle$ where $\gamma_2'$ is another reflection
\emph{commuting} with $\omega$; thus $\omega$ interchanges the fixed points of $\gamma_2'$ and also the fixed
points of $\rho$: Thus if we change coordinates via a suitable dilation (a projectivity preserving the fixed
points of $\omega$),
$\gamma_2'$ goes over to
$\rho$, and thus
$H_f$ is conjugate to $H$.\\
One computes that $\sigma$ and $\tau$ normalize $H$; in fact, $\sigma^{-1}\omega \sigma = \rho $, $\sigma^{-1}\rho
\sigma = \omega\rho $, and $\tau^{-1}\omega \tau =\omega \rho$, $\tau^{-1}\rho \tau = \rho$. Moreover, $\tau$ has
order
$4$ and $\sigma$ order $3$, $(\tau\sigma )^2=1$, thus one has the relations
\begin{gather*}
\tau^4 = \sigma^3 = (\tau \sigma )^2=1 \, .
\end{gather*}
It is known that $\mathfrak{S}_4$ is the group on generators $R$, $S$ with relations $R^4=S^2=(RS)^3=1$; mapping
$R\mapsto \tau^{-1}$, $S\mapsto \tau\sigma$, we see that the group $\langle \tau , \sigma\rangle < N(H)$ is a
quotient of $\mathfrak{S}_4$; since $\langle \tau , \sigma \rangle $ contains elements of order $4$ and order
$3$, its order is at least $12$, but since there are no normal subgroups of order $2$ in $\mathfrak{S}_4$,
$\mathfrak{S}_4=\langle \tau , \sigma \rangle$. To finish the proof, it therefore suffices to note that the order
of $N(H)$ is at most $24$: For this one just has to show that the centralizer of $H$ in
$\mathrm{PSL}_2\,\mathbb{C}$ is just $H$, for then $N(H)/H$ is a subgroup of the group of permutations of the
three nontrivial elements $H-\{ 1 \}$ in $H$ (in fact equal to it). Elements in $\mathrm{PGL}_2\,\mathbb{C}$
commuting with $\omega$ must be of the form
\begin{gather*}
\left[ \left( \begin{array}{cc} a & b \\ -b & a \end{array}\right)\right] \quad \mathrm{or}\quad 
\left[ \left( \begin{array}{cc} a & b \\ b & -a \end{array}\right)\right] \, ,
\end{gather*}
and if these commute also with $\rho$, the elements $1$, $\omega$, $\rho$, $\omega\rho$ are the only possibilities.
\end{proof}

\begin{corollary}
The variety $(V(4)^H)^0 \subset V(4)$ consisting of those points whose stabilizer in
$\mathrm{PSL}_2\,\mathbb{C}$ is exactly $H$ is a $(\mathrm{PSL}_2\,\mathbb{C} , N(H))$-section of $V(4)$.
\end{corollary}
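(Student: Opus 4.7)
The plan is straightforward: verify the three axioms of a $(G,H)$-section from Definition 2.1.1 directly, using Proposition 3.1.2 as a black box. Write $K := H = \langle \omega,\rho\rangle \simeq \mathfrak{V}_4$ and $N := N(K) \simeq \mathfrak{S}_4$ to avoid clashing with the ``$H$'' in the definition of a $(G,H)$-section, set $G := \mathrm{PSL}_2\,\mathbb{C}$, $X := V(4)$ and $Y := (V(4)^K)^0$, and check conditions (1)--(3) in turn.

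First I would dispose of the tacit irreducibility/non-emptiness requirement: $V(4)^K$ is a linear subspace of $V(4)$, hence irreducible; and the locus where the stabilizer strictly contains $K$ is closed in $V(4)^K$, so $Y$ is a Zariski open subset of an irreducible variety, hence irreducible once it is non-empty. Non-emptiness follows from Proposition 3.1.2: since $K$ is an s.g.p., a general $f\in V(4)$ has stabilizer of the form $gKg^{-1}$, and then $g^{-1}f$ has stabilizer exactly $K$, so $g^{-1}f\in Y$. The same observation gives condition (1): the orbit map $G\times Y\to V(4)$ is dominant, so $\overline{G\cdot Y}=V(4)$.

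For condition (2), let $n\in N$ and $f\in Y$. Because $\mathrm{Stab}(nf)=n\,\mathrm{Stab}(f)\,n^{-1}=nKn^{-1}=K$ (the last equality since $n\in N(K)$), the point $nf$ still has stabilizer exactly $K$, so $nf\in Y$; hence $N\cdot Y\subset Y$. For condition (3), suppose $g\in G$ and there exist $f,f'\in Y$ with $gf=f'$. Then
\begin{gather*}
K=\mathrm{Stab}(f')=\mathrm{Stab}(gf)=g\,\mathrm{Stab}(f)\,g^{-1}=gKg^{-1},
\end{gather*}
so $g$ normalizes $K$, i.e.\ $g\in N(K)=N$, which is exactly what is needed.

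There is essentially no obstacle; the only place that requires any thought at all is the irreducibility of $Y$, which I address above. Everything else is a direct translation of the defining properties of a stabilizer in general position into the language of $(G,H)$-sections, so the whole argument is a few lines once Proposition 3.1.2 is in hand.
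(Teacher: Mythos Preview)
Your proof is correct and follows essentially the same route as the paper's: both verify conditions (1) and (3) of Definition 2.1.1 directly from the s.g.p.\ property in Proposition 3.1.2 via the identity $\mathrm{Stab}(gf)=g\,\mathrm{Stab}(f)\,g^{-1}$, and condition (2) is immediate. Your write-up is in fact more careful than the paper's two-line proof, since you also address non-emptiness and irreducibility of $(V(4)^K)^0$; the one step you might want to sharpen is the assertion that the locus in $V(4)^K$ where the stabilizer strictly contains $K$ is closed---this is true, but the cleanest justification is that $(V(4)^K)^0$ is exactly the intersection of $V(4)^K$ with the open principal stratum (points with stabilizer conjugate to $K$), rather than a direct closedness argument.
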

\begin{proof}
The fact that the orbit $\mathrm{PSL}_2\,\mathbb{C} \cdot (V(4)^H)^0$ is dense in $V(4)$ follows since a general
point in $V(4)$ has stabilizer conjugate to $H$; the assertion $ \forall g\in\mathrm{PGL}_2\,\mathbb{C}$,
$\forall x\in (V(4)^H)^0$ : $gx \in (V(4)^H)^0 \implies g\in N(H)$ is clear by definition.
\end{proof}

Let us recall the representation theory of $N(H)=\mathfrak{S}_4$ viewed as the group of permutations of four
letters $\{ a,\: b,\: c,\: d\}$; the character table is as follows (cf. \cite{Se}).\\
\begin{center}
\begin{tabular}{c | rrrrr}
 & $1$ & $(ab)$ & $(ab)(cd)$ & $(abc)$ & $(abcd)$  \\ \hline
$\chi_0$       & $1\;$ & $1\;$ & $1\;$ & $1\;$ & $1\;$ \\
$\epsilon$     & $1\;$ & $-1\;$ & $1\;$ & $1\;$ & $-1\;$ \\
$\theta$       & $2\;$ & $0\;$ & $2\;$ & $-1\;$ & $0\;$ \\
$\psi$         & $3\;$ & $1\;$ & $-1\;$ & $0\;$ & $-1\;$ \\
$\epsilon\psi$ & $3\;$ & $-1\;$ & $-1\;$ & $0\;$ & $1\;$ 
\end{tabular}
\end{center}\vspace{0.3cm}
$V_{{\chi}_0}$ is the  trivial $1$-dimensional representation, $V_{\epsilon}$ is the $1$-dimensional
representation where $\epsilon (g)$ is the sign of the permutation $g$; $\mathfrak{S}_4=N(H)$ being the semidirect
product of $N(H)/H=\mathfrak{S}_3$ by the normal subgroup $H$, $V_{\theta}$ is the irreducible two-dimensional
representation induced from the representation of $\mathfrak{S}_3$ acting on the elements of $\mathbb{C}^3$ which
satisfy $x+y+z=0$ by permutation of coordinates. $V_{\psi}$ is the extension to $\mathbb{C}^3$ of the natural
representation of $\mathfrak{S}_4$ on $\mathbb{R}^3$ as the group of rigid motions stabilizing a regular
tetrahedron; finally, $V_{\epsilon\psi}=V_{\epsilon}\otimes V_{\psi}$.\\
We want to decompose $V(8)\oplus V(0)\oplus V(4)$ as $N(H)$-module; we fix the notation:
\begin{gather}
a_0:=1 ; \quad a_1 := z_1^4+z_2^4 , \; a_2:=6z_1^2z_2^2 ,\; a_3:= z_1^4-z_2^4 ,\\
 a_4:= 4(z_1^3z_2-z_1z_2^3) ,\; a_5:= 4(z_1^3z_2+z_1z_2^3) ; \nonumber\\
e_1:= 28(z_1^6z_2^2-z_1^2z_2^6) , \; e_2:= 56(z_1^7z_2 + z_1^5z_2^3- z_1^3z_2^5-z_1z_2^7) , \nonumber \\
e_3:= 56(z_1^7z_2 - z_1^5z_2^3- z_1^3z_2^5+z_1z_2^7) , \; e_4:= z_1^8-z_2^8 \nonumber \\
e_5:= 8(z_1^7z_2-7z_1^5z_2^3+ 7 z_1^3 z_2^5 -z_1 z_2^7) , \nonumber \\ e_6:=8(z_1^7z_2+7z_1^5z_2^3+ 7 z_1^3
z_2^5 +z_1 z_2^7) , \nonumber \\
e_7:= z_1^8+z_2^8 , \; e_8 := 28 (z_1^6z_2^2 + z_1^2z_2^6) , \; e_9:= 70 z_1^4z_2^4  \, . \nonumber
\end{gather}

\begin{lemma}
One has the following decompositions as $N(H)$-modules:
\begin{gather}
V(0)=V_{{\chi}_0} , \; V(4)=V_{\psi} \oplus V_{\theta} , \; V(8)= V_{\epsilon\psi}\oplus V_{\psi} \oplus
V_{\theta}\oplus V_{{\chi}_0}\, .
\end{gather}
More explicitly,
\begin{gather}
V(0) = \langle a_0 \rangle , \: V(4)= \langle a_3 , a_4 , a_5 \rangle \oplus \langle a_1 , a_2 \rangle , \\
V(8) = \langle e_4 , e_5 , e_6 \rangle \oplus \langle e_1 , e_2 , e_3 \rangle \oplus \langle e_8 , 7e_7-e_9
\rangle \oplus \langle 5e_7 + e_9 \rangle \, . \nonumber 
\end{gather}
Here $\langle e_4 , e_5 , e_6 \rangle$ corresponds to $V_{\epsilon\psi }$ and $\langle e_1 , e_2 , e_3 \rangle$
corresponds to $V_{\psi}$. Moreover, 
\begin{gather}
V(0)^H= \langle a_0 \rangle , \: V(4)^H = \langle a_1 , a_2 \rangle , \: V(8)^H = \langle e_7 , e_8 , e_9 \rangle
\, . 
\end{gather}
\end{lemma}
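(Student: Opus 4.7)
The plan is to prove the lemma in three stages: an abstract character-theoretic decomposition for (23), explicit checks of $N(H)$-stability of the listed subspaces for (24), and a direct computation of the $H$-action on the basis for (25).

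For (23), I would first locate the generators $\tau$, $\sigma$ of $N(H)$ inside the conjugacy classes of $\mathfrak{S}_4$: since $\tau$ has order $4$ and $\sigma$ has order $3$ by Proposition 3.1.2, $\tau$ lies in the class $(abcd)$ and $\sigma$ in $(abc)$; as $H \triangleleft N(H)$ is the unique normal Klein-four of $\mathfrak{S}_4$, its nontrivial elements fill out the class $(ab)(cd)$; and a representative of the remaining class of transpositions can be obtained from $\tau\sigma$, an order-$2$ element outside $H$ (or from any other such element). The trace of $g=[\mathrm{diag}(\lambda,\lambda^{-1})]$ on $V(d)$ is $\sum_{k=0}^d \lambda^{d-2k}$; substituting $\lambda=e^{-\pi i/4},\;e^{2\pi i/3},\;i$ and the corresponding $\lambda$ for a transposition yields the characters of $V(4)$ and $V(8)$ on all five conjugacy classes. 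Orthogonality with the character table of $\mathfrak{S}_4$ then forces the decompositions (23); for instance $V(4)$ is five-dimensional with character values $(5,1,-1,-1)$ on the classes $1,(ab)(cd),(abc),(abcd)$, which is compatible only with $V_\psi\oplus V_\theta$.

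For (24), the task reduces to verifying $N(H)$-stability of each listed subspace, which in turn reduces to checking invariance under the generators $\tau$ and $\sigma$. Since $\tau=[\mathrm{diag}(\theta^{-1},\theta)]$ is diagonal in the monomial basis of $V(d)$, its action on every $a_i$ and $e_j$ in (22) is immediate from their explicit form. Invariance under $\sigma$ is verified by direct substitution of its matrix from Proposition 3.1.2. The one subtle point is the separation of $V_\psi$ from $V_{\epsilon\psi}$ inside $V(8)$: both restrict to the same $H$-module (the sum of the three nontrivial characters of $H$), so the assignment of $\langle e_1,e_2,e_3\rangle$ to $V_\psi$ and $\langle e_4,e_5,e_6\rangle$ to $V_{\epsilon\psi}$ can only be distinguished on the transposition class, e.g.\ by computing the trace of $\tau\sigma$ on each candidate three-dimensional subspace and comparing signs.

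For (25), I would apply $\omega$ and $\rho$ directly to each vector in (22), using $\rho\cdot z_1^p z_2^{d-p}=i^{2p-d}\,z_1^p z_2^{d-p}$ (so $\rho$ acts diagonally with eigenvalues $(-1)^p$ on $V(4)$) together with the dual action $\omega\cdot z_1=-z_2$, $\omega\cdot z_2=z_1$. The $\rho$-eigenvalues are read off at a glance, and intersecting with the $\omega$-fixed subspace singles out exactly the vectors claimed in (25). No step is conceptually deep; the main practical obstacle is the $V(8)$-bookkeeping in (24), especially the distinction of $V_\psi$ from $V_{\epsilon\psi}$, but this is resolved as soon as a transposition representative is fixed.
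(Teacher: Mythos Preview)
Your proposal is correct and essentially parallel to the paper's argument for (24) and (25), but genuinely different for (23). For the abstract decomposition the paper does not compute characters of $V(d)$ directly; instead it first identifies $V(2)=V_{\epsilon\psi}$ (three-dimensional with no $H$-invariants, and trace of $\tau$ equal to $1$), and then bootstraps via Clebsch--Gordan $V(2k)\otimes V(2)=V(2k+2)\oplus V(2k)\oplus V(2k-2)$ together with the multiplication rule $(\epsilon\psi)^2=\chi_0+\epsilon\psi+\psi+\theta$ in the representation ring of $\mathfrak{S}_4$, climbing through $V(4)$, $V(6)$, $V(8)$ in turn. Your direct eigenvalue-trace computation is equally valid and arguably quicker for fixed small $d$; the paper's inductive device is more structural and would extend uniformly to all $V(2k)$. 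A small point worth noting in your write-up: since every order-$2$ element of $N(H)$ is conjugate in $\mathrm{PSL}_2\mathbb{C}$ to $\rho$, the character of $V(d)|_{N(H)}$ automatically takes the same value on the classes $(ab)$ and $(ab)(cd)$, so you lose nothing by omitting the transposition class when computing (23) --- but you do need it (or equivalently $\tau$, as the paper uses) to separate $V_\psi$ from $V_{\epsilon\psi}$ in (24). For (25) the paper reverses your order: it first establishes the dimensions $\dim V(2k)^H$ by a short trace argument and then checks that the listed vectors are invariant, whereas you compute the $H$-action on all basis vectors directly; both are fine.
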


\begin{proof}
We will prove (25) first; one observes that quite generally for $k\ge 0$, $V(2k)^H= (V(2k)^{\rho })^{\omega}$
($\rho$ and $\omega$ commute) and that the monomials $z_1^j z_2^{2k-j}$, $j=0 , \dots , 2k$, are invariant under
$\rho$ if
$j+k$ is even, and otherwise anti-invariant, so if $k=2s$, $\dim V(2k)^{\rho} = 2s+1$, and if $k=2s+1$, $\dim
V(2k)^{\rho} = 2s+1$. Since $\omega$ is also a reflection, we have $2 \dim (V(2k)^{\rho})^{\omega} - \dim
V(2k)^{\rho} = \mathrm{tr}(\omega |_{V(2k)^{\rho}})$, and the trace is $1$ for $k=2s$, and $-1$ for $k=2s+1$, thus
\begin{gather*}
\dim V(2k)^H = s+1 , \; k= 2s , \quad \dim V(2k)^H = s , \; k=2s+1 \, .
\end{gather*}
In particular, the $H$-invariants in $V(0)$, $V(4)$, $V(8)$ have the dimensions as claimed in (25), and one checks
that the elements given there are indeed invariant.\\
To prove (23), we use the Clebsch-Gordan formula $V(2k)\otimes V(2) = V(2k+2)\oplus V(2k) \oplus V(2k-2)$ (cf.
(20)) iteratively together with the fact that the character of the tensor product of two representations of a
finite group is the product of the characters of each of the factors; since $V(2)$ has dimension $3$ and $\dim
V(2)^H=0$, $V(2)$ is irreducible; the value of the character of the $N(H)$-module $V(2)$ on $\tau$ is $1$, so
$V(2)=V_{\epsilon\psi }$. Now $V(2)\otimes V(2) = V(4) \oplus V(2) \oplus V(0)$, and looking at the character
table, one checks that 
\[
(\epsilon\psi)^2 = \chi_0 + (\epsilon\psi) + (\psi) + (\theta ) \, .
\]
This proves the decomposition in (23) for $V(4)$. The decomposition for $V(8)$ is proven similarly (one proves
$V(6)= V_{\psi}\oplus V_{\epsilon\psi} \oplus V_{\epsilon} $ first).\\
The proof of (24) now amounts to checking that the given spaces are invariant under $\sigma$ and $\tau$; finally
note that $V_{\epsilon\psi}$ corresponds to $\langle e_4 , e_5 , e_6 \rangle$ since the value of the character on
$\tau$ is $1$.
\end{proof}
Recall from Lemma 2.4.4 that we want to prove the rationality of $(\mathbb{P}\,
U_{\lambda})/\mathrm{PSL}_2\,\mathbb{C}$ and we can and will always assume in the sequel that $\lambda = (1,
6\epsilon , 1 , 6 )$ for $\epsilon \neq 0$. In view of Lemma 3.1.4 it will be convenient for subsequent
calculations to write the map
$\delta_{\lambda } : V(8) \oplus V(0) \oplus V(4) \to V(4)$ in terms of the basis $(e_1 , \dots , e_9 , a_0 , a_1
, \dots , a_5)$ in the source and the basis $(a_1, \dots , a_5)$ in the target. Denote coordinates in $V(8)\oplus
V(0) \oplus V(4)$ with respect to the chosen basis by $(x_1 , \dots, x_9 , s_0 , s_1 , \dots , s_5)=: (x, s)$.
Then one may write
\begin{gather}
\delta_{\lambda} (x, s) = \left( \begin{array}{c} Q_1(x, s) \\ \vdots \\ Q_5(x,s) \end{array}\right)
\end{gather}
with $Q_1(x,s) , \dots , Q_5(x,s)$ quadratic in $(x,s)$; their values may be computed using formulas (20), (22),
and the definition of $\delta_{\lambda}$ in Lemma 2.4.4, and they can be found in Appendix B. 

\begin{theorem}
Let $\tilde{\mathcal{Q}}_{\lambda} \subset V(8) \oplus V(0) \oplus V(4)$ be the subvariety defined by the
equations $Q_1=\dots = Q_5=0, \: s_3=s_4=s_5=0$. There is exactly one $7$-dimensional irreducible component
$\mathcal{Q}_{\lambda}$ of
$\tilde{\mathcal{Q}}_{\lambda}$ passing through the $N(H)$-invariant point $5e_7+e_9$ in $V(8)$;
$\mathcal{Q}_{\lambda}$ is $N(H)$-invariant and 
\begin{gather}
\mathbb{C}(\mathbb{P}\, U_{\lambda})^{\mathrm{PSL}_2\,\mathbb{C}} = \mathbb{C} (\mathbb{P}\,
\mathcal{Q}_{\lambda})^{N(H)}\, .
\end{gather}
\end{theorem}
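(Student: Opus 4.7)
The strategy is to transport the $(\mathrm{PSL}_2\,\mathbb{C}, N(H))$-section $(V(4)^H)^0$ of $V(4)$ from Corollary 3.1.3 into a $(\mathrm{PSL}_2\,\mathbb{C}, N(H))$-section of $U_\lambda$ via the $\mathrm{PSL}_2\,\mathbb{C}$-equivariant projection $\pi\colon V(8)\oplus V(0)\oplus V(4)\to V(4)$, using Proposition 2.1.2(2). The section so obtained will be $\mathcal{Q}_\lambda$, and Proposition 2.1.2(1), together with projectivization (legitimate because the scalar $\mathbb{C}^*$-action commutes with $\mathrm{PSL}_2\,\mathbb{C}$ and preserves both $U_\lambda$ and $\mathcal{Q}_\lambda$), will then give the isomorphism (27).

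First I would verify that $p:=5e_7+e_9$ lies in $U_\lambda\cap\tilde{\mathcal{Q}}_\lambda$. By Lemma 3.1.4, $p$ spans the trivial isotypic component $V_{\chi_0}$ of $V(8)$ for $N(H)$, so is $N(H)$-fixed, whence $\psi_6(p,p)\in V(4)^{N(H)}$. But $V(4)=V_\psi\oplus V_\theta$ and neither summand has nonzero $N(H)$-invariants (the first is a faithful $\mathfrak{S}_4$-representation, the second is inflated from the standard $2$-dimensional $\mathfrak{S}_3=N(H)/H$-representation), so $\psi_6(p,p)=0$. Consequently $\delta_\lambda$ vanishes identically on the plane $\Pi:=\langle 1,p\rangle\subset V(8)\oplus V(0)$; as $\Pi$ is irreducible and contains the smooth point $1\in U_\lambda$, it must lie in $U_\lambda$, so in particular $p\in U_\lambda$, and clearly also $p\in\tilde{\mathcal{Q}}_\lambda$.

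The main obstacle is a smoothness and dimension computation at $p$. Locally $\tilde{\mathcal{Q}}_\lambda$ is cut out of the $12$-dimensional space $W:=V(8)\oplus V(0)\oplus V(4)^H$ by the five quadrics $Q_1,\dots,Q_5$, so one needs the differential $d\delta_\lambda|_p\colon W\to V(4)$ to have full rank $5$; this can be read off from the explicit formulas in Appendix B. The computation is made tractable by $N(H)$-equivariance: since $p$ is $N(H)$-fixed, $d\delta_\lambda|_p$ is an $N(H)$-equivariant linear map, so $W$ and $V(4)$ split into $N(H)$-isotypic blocks according to Lemma 3.1.4 and the rank question reduces to a handful of small blocks. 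Full rank then implies that $\tilde{\mathcal{Q}}_\lambda$ is smooth of dimension $12-5=7$ at $p$, so a unique irreducible component $\mathcal{Q}_\lambda$ passes through $p$; the same bound shows $\delta_\lambda^{-1}(0)$ is smooth of dimension $10$ at $p$ with $\mathcal{Q}_\lambda=U_\lambda\cap W$ locally, so $\mathcal{Q}_\lambda\subset U_\lambda$.

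It remains to apply Proposition 2.1.2(2). The set $\mathcal{Q}_\lambda$ is $N(H)$-invariant because $N(H)$ fixes $p$ and preserves $\tilde{\mathcal{Q}}_\lambda$, and so must stabilize the unique local component through $p$. A short tangent-space calculation, based on the identity $d\delta_\lambda|_p(\dot f_8,\dot f_0,0)=2\lambda_6\psi_6(p,\dot f_8)$, shows that $d\pi$ maps $T_p\mathcal{Q}_\lambda=\ker(d\delta_\lambda|_p)$ onto $V(4)^H$ provided $\psi_6(p,\cdot)\colon V(8)\to V(4)$ has rank $5$; this rank verification is essentially the same calculation as in the previous paragraph, and it implies that $\pi\colon\mathcal{Q}_\lambda\to V(4)^H$ is dominant. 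By $\mathrm{PSL}_2\,\mathbb{C}$-equivariance this makes $\pi\colon U_\lambda\to V(4)$ dominant as well, since $\mathrm{PSL}_2\,\mathbb{C}\cdot V(4)^H$ is dense in $V(4)$. Proposition 2.1.2(2) applied with $Z=U_\lambda$, $X=V(4)$, $f=\pi|_{U_\lambda}$, $Y=(V(4)^H)^0$, and $Y'=\mathcal{Q}_\lambda$ then exhibits $\mathcal{Q}_\lambda$ as a $(\mathrm{PSL}_2\,\mathbb{C},N(H))$-section of $U_\lambda$, and the identification (27) follows from Proposition 2.1.2(1) after projectivizing.
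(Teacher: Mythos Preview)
Your proposal is correct and follows essentially the same route as the paper: both show $5e_7+e_9\in U_\lambda$ via the invariant plane $\langle 1,5e_7+e_9\rangle$, verify by a tangent-space calculation that $\tilde{\mathcal{Q}}_\lambda$ is smooth of dimension $7$ there, deduce the uniqueness and $N(H)$-invariance of the component $\mathcal{Q}_\lambda$, and then invoke Proposition~2.1.2 with the projection to $V(4)$ and the section $(V(4)^H)^0$. Your write-up is in fact more explicit than the paper's on two points the paper leaves as assertions: you organize the rank-$5$ check via the $N(H)$-isotypic decomposition (the paper just says ``direct calculation''), and you actually justify why $\pi|_{\mathcal{Q}_\lambda}$ dominates $V(4)^H$ by computing $d\delta_\lambda|_p$ and using surjectivity of $\psi_6(p,\cdot)$, whereas the paper simply states that $\mathcal{Q}_\lambda^0$ dominates $(V(4)^H)^0$.
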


\begin{proof}
We want to use Proposition 2.1.2, (2).\\
Note that $5e_7+e_9 \in U_{\lambda}$: In fact, $\delta_{\lambda}$ maps the $N(H)$-invariants in $V(8) \oplus V(0)
\oplus V(4)$ to the $N(H)$-invariants in $V(4)$ which are $0$. Since $U_{\lambda}$ is the unique irreducible
component of $\delta_{\lambda}^{-1}(0)$ passing through $a_0=1$, $U_{\lambda}$ contains the whole plane of
invariants $\langle a_0 , 5e_7 + e_9\rangle $.\\
If we denote by $p: V(8)\oplus V(0) \oplus V(4) \to V(4)$ the projection, then $\tilde{\mathcal{Q}}_{\lambda} =
p^{-1}(V(4)^{H}) \cap \delta_{\lambda}^{-1}(0)$. Clearly, $\tilde{\mathcal{Q}}_{\lambda}$ is $N(H)$-invariant, and
one only has to check that $5e_7+e_9$ is a nonsingular point on it with tangent space of dimension $7$ by direct
calculation: Then there is a unique $7$-dimensional irreducible component $\mathcal{Q}_{\lambda}$ of
$\tilde{\mathcal{Q}}_{\lambda}$ passing through $5e_7+e_9$ which is $N(H)$-invariant (since $5e_7+e_9$ is an
invariant point on it and this point is nonsingular on $\tilde{\mathcal{Q}}_{\lambda}$).\\
It remains to prove (27): $\mathcal{Q}_{\lambda}$ is an irreducible component of $p^{-1}(V(4)^H)\cap U_{\lambda}$
and $\mathcal{Q}_{\lambda}^0 =\mathcal{Q}_{\lambda}\cap p^{-1}((V(4)^H)^0)$ is a dense $N(H)$-invariant open subset
of $\mathcal{Q}_{\lambda}$ dominating $(V(4)^H)^0$. Thus by Proposition 2.1.1 (2),
\begin{gather*}
\mathbb{C}(\mathbb{P}\, U_{\lambda})^{\mathrm{PSL}_2\,\mathbb{C}} \simeq \mathbb{C} (\mathbb{P} \,
\mathcal{Q}_{\lambda}^0 )^{N(H)} \simeq \mathbb{C}( \mathbb{P}\,\mathcal{Q}_{\lambda} )^{N(H)} \, .
\end{gather*}
\end{proof}

\subsection{Dividing by the action of $H$}

Next we would like to "divide out" the action by $H$, so that we are left with an invariant theory problem for the
group $N(H)/H = \mathfrak{S}_3$. Look back at the action of $N(H)$ on $M:=\{ s_3=s_4=s_5 =0\} \subset V(8)\oplus
V(0)
\oplus V(4)$ which is explained in formulas (23), (24); we will adopt the notational convention to denote the
irreducible
$N(H)$-submodule of $V(8)$ isomorphic to $V_{\psi}$ by $V(8)_{(\psi )}$ and so forth; thus
\begin{gather}
M = V(0)_{( \chi_0 )} \oplus V(4)_{(\theta )} \oplus V(8)_{( \chi_0) } \oplus V(8)_{(\theta )} \oplus V(8)_{(\psi
)} \oplus V(8)_{(\epsilon\psi )}\, ,
\end{gather}
and looking at the character table of $\mathfrak{S}_4$, we see that the action of $H$ is nontrivial only on
$V(8)_{(\psi )} \oplus V(8)_{(\epsilon\psi )} = \langle e_1 , e_2 , e_3 \rangle \oplus \langle e_4 , e_5 , e_6
\rangle $ where $x_1, x_2, x_3$ and $x_4 , x_5 , x_6$ are coordinates; in terms of these, we have
\begin{gather}
(\omega )(x_1 , \dots , x_6) = ( -x_1 , \: x_2 , \: -x_3, \: -x_4, \: x_5 , \: -x_6) \, , \\
(\rho )(x_1 , \dots , x_6) = ( x_1 , \: -x_2 , \: -x_3, \: x_4, \: -x_5 , \: -x_6) \, , \nonumber \\
(\omega\rho )(x_1 , \dots , x_6) = ( -x_1 , \: -x_2 , \: x_3, \: -x_4, \: -x_5 , \: x_6)  \, ,\nonumber
\end{gather}
and 
\begin{gather}
\tau (x_1, \dots , x_6) = (-x_1 , \: -i x_3 , \: -i x_2 , \: x_4 , \: -ix_6 , \: -ix_5 )\, , \\
\sigma (x_1, \dots , x_6) =\left( 4x_3 , -\frac{i}{4} x_1 , \: ix_2 , \: -8x_6 ,\: -\frac{i}{8} x_4 , \: -ix_5
\right) \, . \nonumber
\end{gather}
Thus we see that the map
\begin{gather*}
\mathbb{P} ( V(8)_{(\psi )} \oplus V(8)_{( \epsilon\psi ) } ) - \{ x_1x_2x_3=0\}  \to R \times \mathbb{P}^2\, , \\
(x_1, \dots , x_6) \mapsto \left( \left( \frac{x_4}{x_1} , \frac{x_5}{x_2} , \frac{x_6}{x_3} \right) \: , \: \left(
\frac{1}{x_1^2} : \frac{1}{x_2^2} : \frac{1}{x_3^2} \right) \right) \, ,
\end{gather*}
where $R= \mathbb{C}^3$, is dominant with fibres $H$-orbits, and furthermore $N(H)$-equivariant for a suitable
action of $N(H)$ on $R\times \mathbb{P}^2$: In fact, we will agree to write 
\begin{gather*}
\left( \frac{1}{x_1^2} : \frac{1}{x_2^2} : \frac{1}{x_3^2} \right) = \left( \frac{x_2x_3}{x_1} :
\frac{x_3x_1}{x_2} : \frac{x_1x_2}{x_3} \right)
\end{gather*}
and remark that the subspaces 
\begin{gather*}
R= \left\langle \frac{x_4}{x_1} , \frac{x_5}{x_2} , \frac{x_6}{x_3} \right\rangle \, , \quad T:=  \left\langle 
\frac{x_2x_3}{x_1} , \frac{x_3x_1}{x_2} , \frac{x_1x_2}{x_3} \right\rangle
\end{gather*}
of the field of fractions of $\mathbb{C}\left[ V(8)_{(\psi)} \oplus V(8)_{(\epsilon\psi )} \right]$ are invariant
under $\sigma$ and $\tau$ (thus $\mathbb{P}^2 = \mathbb{P} (T) $). If we denote the coordinates with respect to
the basis vectors in $R$ resp. $T$ given above by $r_1, r_2 , r_3$ resp. $y_1, y_2, y_3$, then the actions of
$\tau$ and $\sigma$ are described by
\begin{gather*}
\tau (r_1 , r_2 , r_3) = (-r_1 , r_3 , r_2) \, , \; \sigma (r_1 , r_2 , r_3) = (-2r_3 , r_1/2 , -r_2) \, \\
\tau (y_1 , y_2 , y_3) = (y_1 , -y_3 , -y_2) \, , \; \sigma (y_1, y_2 , y_3) = ((1/16) y_3 , -16y_1 , -y_2)\, .
\end{gather*}
Thus the only $N(H)$-invariant lines in $R$ resp. $T$ are the ones spanned by $(2 , 1 , -1)$ resp. $(-1, 16, -16)$
on which $\tau$ acts by multiplication by $-1$ resp. by $+1$ and hence
\begin{gather}
R= R_{(\epsilon )} \oplus R_{(\theta )} \, , \; T = T_{(\chi_0 )} \oplus T_{(\theta )} \, .
\end{gather}
We see that the morphism
\begin{gather}
\pi \: : \: \mathbb{P} ( M ) - \{ x_1x_2x_3=0\}  \\ \to  R \times
\mathbb{P} ( T\oplus
 V(8)_{( \chi_0) } \oplus V(8)_{(\theta )} \oplus V(0)_{(\chi_0 )} \oplus V(4)_{(\theta )} ) \simeq
R\times \mathbb{P}^8 \, , \nonumber \\
\pi (x,s) := \left( \left( \frac{x_4}{x_1} , \frac{x_5}{x_2} , \frac{x_6}{x_3} \right) \: , \: \left(
\frac{x_2x_3}{x_1} : \frac{x_3x_1}{x_2} : \frac{x_1x_2}{x_3} \right) \right. \nonumber \\
\left. : x_7 : x_8 : x_9 : s_0 :
s_1 : s_2 \nonumber
\right)
\end{gather}
is $N(H)$-equivariant, dominant, and all fibres are $H$-orbits. If we consider $(x_7 , x_8 , x_9, s_0, s_1 , s_2)$
as coordinates in $V(8)_{( \chi_0) } \oplus V(8)_{(\theta )} \oplus V(0)_{(\chi_0 )} \oplus V(4)_{(\theta )}$ in
the \emph{target} of the map $\pi$ (as we do in formula (32)) we denote them by $(y_7, y_8, y_9, y_{10}, y_{11},
y_{12})$ to achieve consistency with \cite{Kat2}.\\
How do we get equations which define the image
\begin{gather*}
\pi (\mathbb{P}\, \tilde{\mathcal{Q}}_{\lambda } \cap \{x_1x_2 x_3 \neq 0 \} ) \subset R\times (\mathbb{P}^8 -
\{y_1y_2y_3 =0 \} )
\end{gather*}
in $\mathbb{P}^8 - \{y_1y_2y_3 =0 \}$ from the quadrics $Q_1(x,s), \dots , Q_5(x,s)$ in formula (26)? We can set
$s_3=s_4=s_5=0$ in $Q_1,\dots , Q_5$ to obtain equations $\bar{Q}_1, \dots , \bar{Q}_5$ for
$\mathbb{P}\, \tilde{\mathcal{Q}}_{\lambda }$ in $\mathbb{P} (M)$; the point is now that the quantities 
\begin{gather*}
\bar{Q}_1, \; \bar{Q}_2 , \; \frac{\bar{Q}_3}{x_1} , \; \frac{\bar{Q}_4}{x_2} , \; \frac{\bar{Q}_4}{x_3}
\end{gather*}
are $H$-invariant (as one sees from the equations in Appendix B). Moreover, the map 
\begin{gather*}
\pi : \mathbb{P} (M) - \{x_1x_2x_3 = 0 \} \to R\times (\mathbb{P}^8 -\{ y_1y_2y_3=0\})
\end{gather*}
is a geometric quotient for the action of $H$ on the source (by \cite{Po-Vi}, Thm. 4.2), so we can write
\begin{gather*}
\bar{Q}_1 =q_1 (r_1, \dots , y_{12}) , \; \bar{Q}_2 = q_2 (r_1, \dots , y_{12}) , \; \frac{\bar{Q}_3}{x_1} = q_3
(r_1, \dots , y_{12}), \\
\frac{\bar{Q}_4}{x_2} = q_4 (r_1, \dots , y_{12}) , \;
\frac{\bar{Q}_4}{x_3}= q_5 (r_1, \dots , y_{12})
\end{gather*} 
where $q_1, \dots , q_5$ are polynomials in $(r_1, r_2 , r_3)$, $(y_1, y_2, y_3, y_7, \dots , y_{12})$ which one
may find written out in Appendix B. Here we just want to emphasize their structural properties which will
be most important for the subsequent arguments:\\
\begin{itemize}
\item[(1)]
The polynomials $q_1$, $q_2$ are homogeneous of degree $2$ in the set of variables $(y_1, \dots , y_{12})$;
 the coefficients of the monomials in the $y$'s are (inhomogeneous) polynomials of degrees $\le 2$ in $r_1, \:
r_2, \: r_3$. For $r_1=r_2=r_3=0$, $q_1, q_2$ do not vanish identically. 
\item[(2)]
The polynomials $q_3$, $q_4$, $q_5$ are homogeneous linear in $(y_1, \dots , y_{12})$; the coefficients of the
 monomials in the $y$'s are (inhomogeneous) polynomials of degrees $\le 2$ in $r_1, \:
r_2, \: r_3$. For $r_1=r_2=r_3=0$, $q_3, q_4, q_5$ do not vanish identically. 
\end{itemize}

\begin{theorem}
Let $\tilde{Y}_{\lambda}$ be the subvariety of $R\times\mathbb{P}^8$ defined by the equations
$q_1=q_2=q_3=q_4=q_5=0$. There is an irreducible $N(H)$-invariant component $Y_{\lambda }$ of
$\tilde{Y}_{\lambda}$ with $\pi ([x^0 ]) \in Y_{\lambda}$, where $x^0 : = 13i (5e_7 + e_9) + 5 (4e_1-ie_2+e_3)$,
such that
\begin{gather}
\mathbb{C} (\mathbb{P}\, \mathcal{Q}_{\lambda})^{N(H)} \simeq \mathbb{C} (Y_{\lambda})^{N(H)} \, .
\end{gather}
\end{theorem}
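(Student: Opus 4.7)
The plan is to exploit the fact that the map $\pi$ of (32) is a geometric quotient for the $H$-action on $\mathbb{P}(M) - \{x_1x_2x_3 = 0\}$ and is $N(H)$-equivariant, as laid out just before the theorem. Restricting $\pi$ to $\mathbb{P}\,\mathcal{Q}_\lambda \cap \{x_1x_2x_3 \neq 0\}$ should produce a geometric $H$-quotient onto a dense open subset of an irreducible component $Y_\lambda$ of $\tilde{Y}_\lambda$, after which the isomorphism (33) is forced by taking residual $N(H)/H \simeq \mathfrak{S}_3$-invariants.

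The first concrete step is to verify by direct computation, using the explicit forms of $Q_1, \ldots , Q_5$ from Appendix B, that $x^0 \in \tilde{\mathcal{Q}}_\lambda$; the vanishing of $s_3, s_4, s_5$ is immediate since $x^0$ has no $s$-component. Because the $e_1, e_2, e_3$-coordinates of $x^0$ are $20, -5i, 5$, all nonzero, the point $[x^0]$ lies in $\{x_1x_2x_3 \neq 0\}$ where $\pi$ is regular, and hence $\pi([x^0]) \in \tilde{Y}_\lambda$ by the very definition of $q_1, \ldots, q_5$ in terms of $\bar Q_1, \ldots, \bar Q_5$. The next step is to prove that $[x^0]$ actually lies on the specific component $\mathcal{Q}_\lambda$: compute the Jacobian of the eight defining equations $(Q_1, \ldots, Q_5, s_3, s_4, s_5)$ at $x^0$ and check that it has maximal rank $8$, so that $[x^0]$ is a smooth point of $\tilde{\mathcal{Q}}_\lambda$ and lies on a unique $7$-dimensional irreducible component; this component must then be $\mathcal{Q}_\lambda$ because we can connect $[x^0]$ to the $N(H)$-invariant point $[5e_7+e_9]$ through the smooth locus, for instance along the line $t\mapsto (5e_7+e_9) + t(4e_1 - ie_2 + e_3)$, whose generic member is likewise smooth on $\tilde{\mathcal{Q}}_\lambda$.

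Once this is established, define $Y_\lambda := \overline{\pi(\mathbb{P}\,\mathcal{Q}_\lambda \cap \{x_1x_2x_3\neq 0\})}$. It is irreducible as the closure of the image of an irreducible variety under a morphism, it contains $\pi([x^0])$ by construction, and it is $N(H)$-invariant since $\mathcal{Q}_\lambda$ is and $\pi$ is $N(H)$-equivariant. For the component count, $\mathbb{P}\,\mathcal{Q}_\lambda$ is $6$-dimensional and the generic fibre of $\pi |_{\mathbb{P}\,\mathcal{Q}_\lambda}$ is a finite $H$-orbit, so $\dim Y_\lambda = 6$; on the other hand $\tilde{Y}_\lambda$ is cut out of $R \times \mathbb{P}^8$ (dimension $11$) by the five equations $q_1, \ldots, q_5$, hence every component of $\tilde{Y}_\lambda$ has dimension $\ge 6$. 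Thus $Y_\lambda$ is a component of $\tilde{Y}_\lambda$. The isomorphism (33) then follows from the observation that $\pi$, restricted to $\mathbb{P}\,\mathcal{Q}_\lambda \cap \{x_1x_2x_3 \neq 0\}$, remains a geometric $H$-quotient onto its image, so that $\mathbb{C}(\mathbb{P}\,\mathcal{Q}_\lambda)^H \simeq \mathbb{C}(Y_\lambda)$, and taking $N(H)/H$-invariants yields $\mathbb{C}(\mathbb{P}\,\mathcal{Q}_\lambda)^{N(H)} \simeq \mathbb{C}(Y_\lambda)^{N(H)}$. The main obstacle in this program is the component-identification step: $x^0$ has been moved deliberately far away from $5e_7+e_9$ in order to enter the open locus $\{x_1x_2x_3\neq 0\}$, and so one must check by explicit calculation both that $[x^0]$ is smooth on $\tilde{\mathcal{Q}}_\lambda$ and that it is still connected to $[5e_7+e_9]$ inside the single irreducible component $\mathcal{Q}_\lambda$.
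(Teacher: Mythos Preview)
Your overall framework matches the paper's: define $Y_\lambda$ as the closure of $\pi(\mathbb{P}\,\mathcal{Q}_\lambda \cap \{x_1x_2x_3\neq 0\})$, and deduce (33) from the fact that $\pi$ is an $N(H)$-equivariant geometric $H$-quotient. The dimension count showing $Y_\lambda$ is a component of $\tilde Y_\lambda$ is also fine. The crux, as you correctly identify, is proving $[x^0]\in\mathcal{Q}_\lambda$.

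Here your proposed argument has a genuine gap. The line $t\mapsto (5e_7+e_9)+t(4e_1-ie_2+e_3)$ is \emph{not} contained in $\tilde{\mathcal{Q}}_\lambda$, so you cannot walk along it through the smooth locus. To see this, observe that this line sits inside the $3$-dimensional subspace $V(8)^\sigma=\langle 5e_7+e_9,\;8e_4-ie_5-e_6,\;4e_1-ie_2+e_3\rangle$. Because $\delta_\lambda$ is $N(H)$-equivariant, $\delta_\lambda(V(8)^\sigma)\subset V(4)^\sigma$, and $V(4)^\sigma$ is one-dimensional; hence $\tilde{\mathcal{Q}}_\lambda\cap V(8)^\sigma$ is cut out by a \emph{single} quadratic form on $V(8)^\sigma$, i.e.\ it is a conic in $\mathbb{P}(V(8)^\sigma)\cong\mathbb{P}^2$. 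One checks (and the paper does) that this conic is irreducible, so it contains no line. Your line therefore meets $\tilde{\mathcal{Q}}_\lambda$ in exactly the two points $[5e_7+e_9]$ and $[x^0]$ and nowhere else; its generic member is not on $\tilde{\mathcal{Q}}_\lambda$ at all. (Concretely, $\hat Q_3(4e_1-ie_2+e_3)=1248x_2x_3=-1248i\neq 0$.)

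The paper's fix is exactly the observation above: the irreducible conic $\tilde{\mathcal{Q}}_\lambda\cap V(8)^\sigma$ is an irreducible subvariety of $\tilde{\mathcal{Q}}_\lambda$ containing both $[5e_7+e_9]$ and $[x^0]$. Since $[5e_7+e_9]$ is already known to be a smooth point of $\tilde{\mathcal{Q}}_\lambda$ (Theorem 3.1.5), this conic lies entirely in the unique component $\mathcal{Q}_\lambda$ through that smooth point, and hence so does $[x^0]$. Note that this argument does not require checking smoothness at $[x^0]$; the Jacobian computation you propose there is unnecessary. Replacing your line by this conic (and invoking $\sigma$-equivariance to see why only one quadratic equation survives on $V(8)^\sigma$) repairs the gap.
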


\begin{proof}
The variety $Y_{\lambda}$ will be the closure of the image $\pi (\mathbb{P}\,\mathcal{Q}_{\lambda} \cap
\{x_1x_2x_3\neq 0\} )$ in $R\times\mathbb{P}^8$.\\
It remains to see that $x^0 \in \mathcal{Q}_{\lambda}$. Recall from Theorem 3.1.5 that $\mathcal{Q}_{\lambda}$ is
the unique irreducible component of $\tilde{\mathcal{Q}}_{\lambda}$ passing through the $N(H)$-invariant point
$5e_7+e_9$, and that this point is a nonsingular point on $\tilde{\mathcal{Q}}_{\lambda}$; thus, if we can find an
irreducible subvariety of $\tilde{\mathcal{Q}}_{\lambda}$ which contains both $5e_7+e_9$ and $x^0$, we are done.
The sought-for subvariety is $\tilde{\mathcal{Q}}_{\lambda} \cap V(8)^{\sigma }$, where $V(8)^{\sigma}$ are the
elements in $V(8)$ invariant under $\sigma\in N(H)$. One sees that $x^0$ and $5e_7+e_9$ lie on it, and computing 
\begin{gather*}
V(8)^{\sigma}= \langle 5e_7 + e_9 , 8e_4 -ie_5 -e_6 , 4e_1  -ie_2 +e_3 \rangle \, , \\
V(4)^{\sigma}= \langle 2(z_1^4-z_2^4)+ 4 (z_1^3z_2 +z_1z_2^3) +4i (z_1^3z_2 - z_1z_2^3) \rangle  \, ,
\end{gather*}
and using $\delta_{\lambda} (V(8)^{\sigma}) \subset V(4)^{\sigma}$, we find that
$\tilde{\mathcal{Q}}_{\lambda}\cap V(8)^{\sigma}$ is a quadric in $V(8)^{\sigma}$ which is easily checked to be
irreducible. 
\end{proof}

Thus it remains to prove the rationality of $Y_{\lambda}/N(H)=Y_{\lambda}/\mathfrak{S}_3$. 

\subsection{Inner projections and the "no-name" method}
The variety $\tilde{Y}_{\lambda}$ comes with the two projections
\begin{gather*}
\begin{CD}
\tilde{Y}_{\lambda} @>{p_{\mathbb{P}^8}}>> \mathbb{P}^8 \\
@V{p_R}VV \\
R
\end{CD}
\end{gather*}
Recall from (32) that $N:= \mathbb{P} ( V(8)_{\theta} \oplus V(4)_{\theta}) \subset \mathbb{P}^8$ is an
$N(H)$-invariant $3$-dimensional projective subspace of $\mathbb{P}^8$. We will show
$\mathbb{C}(Y_{\lambda})^{N(H)} \simeq \mathbb{C}(R\times N)^{N(H)}$ via the following theorem.

\begin{theorem}
There is an open $N(H)$-invariant subset $R_0\subset R$ containing $0\in R$ with the following properties:
\begin{itemize}
\item[(1)]
For all $r\in R_0$ the fibre $p_R^{-1}(r) \subset \tilde{Y}_{\lambda}$ is irreducible of dimension $3$, and
$p_R^{-1}(R_0)$ is an open $N(H)$-invariant subset of $Y_{\lambda}$.
\item[(2)]
There exist $N(H)$-sections $\sigma_1$, $\sigma_2$ of the $N(H)$-equivariant projection $R_0\times \mathbb{P}^8 \to
R_0$ such that $N(r) : = \langle \sigma_1(r) , \: \sigma_2 (r) , (1: 0 : 0 : \dots : 0), (0: 1: 0: \dots : 0), (0:
0: 1 : 0 : \dots : 0) \rangle \subset \mathbb{P}^8$, $r\in R_0$, is an $N(H)$-invariant family of $4$-dimensional
projective subspaces in $\mathbb{P}^8$ with the properties:
\begin{itemize}
\item[(i)]
$N(r)$ is disjoint from $N$ for all $r\in R_0$.
\item[(ii)]
The fibre $p_{\mathbb{P}^8}(p_R^{-1}(r)) \subset \mathbb{P}^8$ contains the line $\langle \sigma_1(r) ,
\sigma_2(r)\rangle \subset N(r)$ for all $r\in R_0$.
\item[(iii)]
The projection $\pi_r : \mathbb{P}^8 \dasharrow N$ from $N(r)$ to $N$ maps the fibre
$p_{\mathbb{P}^8}(p_R^{-1}(r)) \subset \mathbb{P}^8$ dominantly onto $N$ for all $r\in R_0$.
\end{itemize}
\end{itemize}
\end{theorem}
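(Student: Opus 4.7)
My plan is to treat parts (1) and (2) separately, beginning from the distinguished point $r=0\in R$ and extending to an open neighborhood by semicontinuity, with the heart of the argument being the explicit construction of $\sigma_1, \sigma_2$ out of the structural data in Appendix B.

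For (1), I would analyze the fiber $p_R^{-1}(0)\subset\mathbb{P}^8$ directly. By the structural properties listed at the end of subsection 3.2, the specializations $q_3(0,y), q_4(0,y), q_5(0,y)$ do not vanish identically, and an inspection of the explicit formulas in Appendix B should show that they are linearly independent as linear forms on $\mathbb{P}^8$; they therefore cut out a $\mathbb{P}^5\subset\mathbb{P}^8$. Restricting the two quadrics $q_1(0,y), q_2(0,y)$ to this $\mathbb{P}^5$ yields a pencil of quadrics. I would verify by direct computation that this pencil is base-point-free and defines a smooth complete intersection, i.e.\ a del Pezzo threefold of degree $4$, which is in particular irreducible of dimension $3$. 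Upper semicontinuity of fiber dimension combined with the fact that in a flat family the locus with irreducible fiber is open then provides an open set $R_0^0\ni 0$ where all fibers are irreducible and $3$-dimensional; the $N(H)$-saturation $R_0$ of $R_0^0$ is the required open invariant subset, and $p_R^{-1}(R_0)$ coincides with an open piece of $Y_\lambda$ because its fibers meet $Y_\lambda$ in full dimension.

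For (2), I would seek $\sigma_1, \sigma_2$ by looking for lines $\ell(r)=\langle \sigma_1(r),\sigma_2(r)\rangle$ lying on the fiber $F(r):=p_{\mathbb{P}^8}(p_R^{-1}(r))$. A line lies on $F(r)$ iff each $q_j(r,\cdot)$ restricts to the zero polynomial on $\ell(r)$, which amounts to the vanishing of $q_j(r,\sigma_i)$ for $i=1,2,\ j=3,4,5$, together with the vanishing of $q_j(r,\sigma_i)$ and of the polar forms $Q_j(r,\sigma_1,\sigma_2)$ for $j=1,2$. At $r=0$ the del Pezzo threefold $F_0$ carries a classically known two-parameter family of lines; I would select a pair $v_1, v_2$ of points in $F_0$ spanning such a line and arranged so that their $(y_7, y_{10})$-components form an invertible $2\times 2$ matrix (ensuring (i) at $r=0$), and then extend by the implicit function theorem. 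Because the $N(H)$-action permutes lines on $F_0$, $v_1, v_2$ can be chosen so the resulting section germs are canonical in an $N(H)$-equivariant way, yielding rational sections $\sigma_1, \sigma_2$ over $R_0$.

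Property (ii) is automatic from the defining condition on $\ell(r)$; (i) follows from the non-vanishing at $r=0$ of the $(y_7,y_{10})$-minor and semicontinuity. The main obstacle is property (iii): the projection $\pi_r:F(r)\dashrightarrow N$ from a $4$-dimensional center onto the $3$-dimensional $N$ must be dominant. Since both source and target are $3$-dimensional, dominance is equivalent to generic finiteness, and by semicontinuity it suffices to check at $r=0$, where it reduces to the statement that $F_0$ is not contained in $N(0)$ and that a generic fibre of the projection $F_0\dashrightarrow N$ is finite. This is the step that genuinely requires explicit calculation with the formulas in Appendix B rather than structural invariant-theoretic arguments: one must verify the non-degeneracy of a specific linear-algebraic configuration on $F_0$. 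Once verified at $r=0$, openness of dominance propagates the property throughout $R_0$ (shrinking $R_0$ if necessary but preserving the point $0$).
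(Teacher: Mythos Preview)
Your plan for (1) is essentially the paper's (check the fibre at $r=0$, spread by semicontinuity), though the paper makes no smoothness claim for $F_0$ and simply verifies irreducibility and dimension by computer; your del Pezzo assertion is unproved and unnecessary.

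The real gap is in (2), specifically the $N(H)$-equivariance of $\sigma_1,\sigma_2$. Producing a line on $F_0$ and deforming it via the implicit function theorem yields at best an analytic germ of a section of the family of Fano schemes of lines; nothing in your argument forces this germ to be $N(H)$-equivariant, or algebraic, or to carry two equivariantly marked points. The sentence ``$v_1,v_2$ can be chosen so the resulting section germs are canonical in an $N(H)$-equivariant way'' is precisely the step that requires proof and does not get one. The paper's construction is quite different and is designed to make equivariance automatic. The section $\sigma_1$ is taken \emph{constant} at the $N(H)$-fixed point $[y_{10}=1]$, which lies on every $F_r$ by inspection of the $q_i$. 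For $\sigma_2$ the paper goes back upstairs to $\mathbb{P}(M)$ and analyses the threefold $V=\mathbb{P}(\delta_\lambda^{-1}(0)\cap V(8))$: it splits as $X_1\cup X_2$ (octics with a sixfold root, respectively the orbit closure of $5e_7+e_9$) of degrees $18$ and $14$, and an explicit intersection count shows that the $N(H)$-equivariant family of linear subspaces $L(r)$ meets $X_2$ in exactly four points off $\{x_1x_2x_3=0\}$. Since $|H|=4$ and $H$ acts freely there, these four points form a single $H$-orbit $\mathfrak{O}_r$, and $\sigma_2(r):=\pi(\mathfrak{O}_r)$ is then tautologically $N(H)$-equivariant and algebraic. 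Property (ii) then comes for free from the shape of the equations (the variable $y_{10}$ enters $q_1,q_2$ only through $y_{10}y_{11}$ and $y_{10}y_{12}$, and $\sigma_2(r)$ has $y_{11}=y_{12}=0$ because it comes from $V(8)$), rather than being imposed as a constraint on the choice of line.
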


Before turning to the proof, let us note the following corollary.

\begin{corollary}
One has the field isomorphism 
\begin{gather*}
\mathbb{C} (Y_{\lambda})^{N(H)} \simeq \mathbb{C} (R\times N)^{N(H)}\, ,
\end{gather*}
and the latter field is rational. Hence $\mathfrak{M}_3$ is rational. 
\end{corollary}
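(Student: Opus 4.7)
\emph{Step 1 (field isomorphism via inner projection).} The first task is to promote the inner-projection data of Theorem 3.3.1 to a genuine $N(H)$-equivariant birational map
\begin{gather*}
\Phi \: : \: p_R^{-1}(R_0) \dasharrow R_0 \times N\, , \qquad y \mapsto \bigl(p_R(y),\: \pi_{p_R(y)}(p_{\mathbb{P}^8}(y))\bigr)\, ,
\end{gather*}
after which passing to $N(H)$-invariants yields $\mathbb{C}(Y_\lambda)^{N(H)}\simeq \mathbb{C}(R\times N)^{N(H)}$. The structural properties (1)--(2) of the $q_i$ recorded after Theorem 3.2.1 show that, for each $r\in R_0$, the fibre $Y_r := p_{\mathbb{P}^8}(p_R^{-1}(r))$ lies in the $5$-dimensional linear subspace $\Lambda(r):=\{q_3=q_4=q_5=0\}\subset\mathbb{P}^8$ and is an irreducible component of the complete intersection $\{q_1=q_2=0\}$ cut out there -- that is, a del Pezzo threefold of degree $4$. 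By Theorem 3.3.1 (2)(ii), $Y_r$ contains the line $\ell(r):=\langle\sigma_1(r),\sigma_2(r)\rangle=\Lambda(r)\cap N(r)$. Consequently, the restriction of $\pi_r$ to $\Lambda(r)$ is precisely the classical inner projection from $\ell(r)\subset Y_r$ within $\Lambda(r)\simeq\mathbb{P}^5$, a birational map onto $\mathbb{P}^3\simeq N$: for generic $q\in N$, the plane $\langle\ell(r),q\rangle$ meets each $q_i$ in a conic splitting as $\ell(r)\cup L_{i,q}$, and the residual intersection $L_{1,q}\cap L_{2,q}$ is the single preimage of $q$.

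\emph{Step 2 (rationality of $\mathbb{C}(R\times N)^{N(H)}$).} By Lemma 3.1.4 and the definitions of $R$ and $N$, the normal subgroup $H\triangleleft N(H)$ acts trivially on both factors -- the representations $\epsilon$, $\theta$, $\chi_0$ involved all factor through $N(H)/H\simeq\mathfrak{S}_3$ -- so the action descends to $\mathfrak{S}_3$. I apply the no-name lemma to the trivial $\mathfrak{S}_3$-equivariant vector bundle $R\times N\to N$ of rank $3$:
\begin{gather*}
\mathbb{C}(R\times N)^{\mathfrak{S}_3}\simeq \mathbb{C}(N)^{\mathfrak{S}_3}(t_1,t_2,t_3)\, .
\end{gather*}
I then apply the lemma once more to the $\mathfrak{S}_3$-equivariant projection $V_\theta\oplus V_\theta\to V_\theta$ (source viewed as a rank-$2$ trivial bundle): this exhibits $\mathbb{C}(V_\theta^{\oplus 2})^{\mathfrak{S}_3}$ as a purely transcendental degree-$2$ extension of $\mathbb{C}(V_\theta)^{\mathfrak{S}_3}$, and the latter is rational by Chevalley--Shephard--Todd since $\mathfrak{S}_3$ acts on $V_\theta$ as a complex reflection group (with polynomial invariants in degrees $2$ and $3$). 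The $\mathbb{C}^*$-grading induced by scaling makes $\mathbb{C}(V_\theta^{\oplus 2})^{\mathfrak{S}_3}$ a graded rational field whose degree-$0$ subfield $\mathbb{C}(N)^{\mathfrak{S}_3}=\mathbb{C}(V_\theta^{\oplus 2})^{\mathfrak{S}_3\times\mathbb{C}^*}$ is again rational by a standard toric-quotient argument; hence $\mathbb{C}(R\times N)^{N(H)}$ is rational.

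\emph{Step 3 (conclusion and main obstacle).} Chaining the isomorphisms of Theorem 2.4.3, Lemma 2.4.4, Theorem 3.1.5, Theorem 3.2.1 with Step 1 yields
\begin{gather*}
\mathbb{C}(\mathfrak{M}_3)\simeq \mathbb{C}(\mathbb{P} V(0,4))^{\mathrm{SL}_3\,\mathbb{C}}\simeq\cdots\simeq\mathbb{C}(R\times N)^{N(H)}\, ,
\end{gather*}
which Step 2 shows is purely transcendental over $\mathbb{C}$; hence $\mathfrak{M}_3$ is rational. The main obstacle is the birationality -- not merely the dominance given by Theorem 3.3.1 (2)(iii) -- of the inner projection in Step 1: one has to verify uniformly in $r\in R_0$ that $Y_r$ is a del Pezzo threefold of degree $4$ containing $\ell(r)$ and that the splittings $\ell(r)\cup L_{i,q}$ are of the required generic form, reducing to explicit computations with the quadrics $q_1,q_2$ of Appendix B. Once that geometric fact is secured, Steps 2 and 3 are formal consequences of the no-name lemma and reflection-group invariant theory.
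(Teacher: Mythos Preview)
Your proposal is correct and follows essentially the same route as the paper. Step~1 is identical in substance: both you and the paper deduce birationality of $\pi_r$ from the residual-intersection argument (a plane through the line $\ell(r)$ meets each quadric in $\ell(r)$ plus a residual line, and the two residual lines meet in a single point). Step~2 differs only in packaging: the paper reduces via no-name to stable rationality of $N/N(H)$ of level $\le 3$ and then invokes rationality of the unirational surface $\mathbb{P}(\mathbb{C}^3)/\mathfrak{S}_3$, whereas you argue more explicitly via Chevalley--Shephard--Todd on $V_\theta$ together with a second application of no-name and a descent to the projectivization. Your route is slightly more detailed but arrives at the same conclusion; the only place to tighten is the ``toric-quotient'' step, which is cleaner if you instead apply no-name directly to the $\mathfrak{S}_3$-equivariant affine-bundle projection $\mathbb{P}(V_\theta\oplus V_\theta)\dasharrow \mathbb{P}(V_\theta)$ (generically free on the base $\mathbb{P}^1$, whose $\mathfrak{S}_3$-quotient is $\mathbb{P}^1$), avoiding the passage through the affine cone altogether.
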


\begin{proof}
(of corollary) The $N(H)$-invariant set $p_R^{-1}(R_0)$ is an open subset of $Y_{\lambda}$. Let us see that the
projection $\pi_r :  F_r:= p_{\mathbb{P}^8} (p_R^{-1}(r)) \dasharrow N$ is birational. In fact, $F_r$ is of
dimension $3$ and irreducible and the intersection of a $3$-codimensional linear subspace and two quadrics in
$\mathbb{P}^8$. Moreover, $F_r\cap N(r)$ contains a line $L_r$ by Theorem 3.3.1 (2), (ii). Thus for a general
point $P$ in $N$, $F_r \cap \langle L_r , P\rangle $ consists of $L_r$ and a single point (namely the point of
intersection of the two lines which are the residual intersections of each of the two quadrics defining $F_r$ with
$\langle L_r , P\rangle $, the other component being $L_r$ itself). Thus $\pi_r$ is generically one-to-one whence
birational.\\
Thus one has a birational $N(H)$-isomorphism $p_R^{-1}(R_0) \dasharrow R_0 \times N$, given by sending $(r, [y])$
to $(r , \pi_r ([y]))$. Thus one gets the field isomorphism in Corollary 3.3.2.\\
By the no-name lemma (cf. e.g. \cite{Dol1}, section 4), $\mathbb{C}(R\times N)^{N(H)} \simeq \mathbb{C} (N)^{N(H)}
(T_1, T_2, T_3)$, where $T_1, \: T_2, \: T_3$ are indeterminates, thus it suffices to show that the quotient of
$N$ by $N(H)$ is stably rational of level $\le 3$. This in turn follows from the same lemma, since clearly, if we
take the representation of $\mathfrak{S}_3$ in $\mathbb{C}^3$ by permutation of coordinates, the quotient of
$\mathbb{P} (\mathbb{C}^3 )$ by $\mathfrak{S}_3$, a unirational surface, is rational.
\end{proof}

\begin{proof}
(of theorem) The proof will be given in several steps.\\
\emph{Step 1. (Irreducibility of the fibre over $0$)} We have to show that the variety
$p_{\mathbb{P}^8 } p_R^{-1}(0)) \subset \mathbb{P}^8$ is irreducible and $3$-dimensional. We have explicit
equations for it (namely the ones that arise if we substitute $r_1=r_2=r_3=0$ in $q_1 , \dots , q_5$, which are
thus $3$ linear and $2$ quadratic equations); the assertions can then be checked with a computer algebra system
such as \ttfamily{Macaulay 2}\rmfamily . Recall from Theorem 3.2.1 that $Y_{\lambda}$ contains $\pi ([x^0] )$. In
fact,
\begin{gather}
\pi ([x^0]) = \left( (0,0,0) , \left( -\frac{5}{4} : 20 : -20 : 65 : 0 : 13 : 0 : 0 : 0 \right)\right) \, ,
\end{gather}
as follows from the definition of $x^0$ in Theorem 3.2.1 and the definition of $\pi$ in (32). Thus $\pi ([x^0])$
lies in the fibre over $0$ of $p_R^{-1}$ and thus, since there is an open subset around $0$ in $R$ over which the
fibres are irreducible and $3$-dimensional, assertion (1) of Theorem 3.3.1 is established.\\
\emph{Step 2. (Construction of $\sigma_1$)} To obtain $\sigma_1$, we just assign to $r\in R$ the point $(r,
\sigma_1(r))$ with $\sigma_1(r) = (0: 0: 0: 0 : 0 : 0 : 1 : 0: 0)$, i.e. $y_{10}=1$, the other $y$'s being $0$.
This always is in the fibre $p_{\mathbb{P}^8}(p_R^{-1}(r))$ as one sees on substituting in the equations
$q_1,\dots , q_5$. Moreover, this is an $N(H)$-section, since $y_{10}$ is a coordinate in the space $V(0)_{\chi_0}$
in formula (32).\\
\emph{Step 3. (Construction of $\sigma_2$; decomposition of $V:=\mathbb{P}(\delta_{\lambda}^{-1}(0)\cap V(8))$ )}
The construction of a section $\sigma_2$, $\sigma_2 (r) = (\sigma_2 ^{(1)}(r) : \dots : \sigma_2^{(9)}(r) )$,
involves a little more work. Let us look back at the construction of $Y_{\lambda}$ in subsection 3.2 for this,
especially the definition of the projection $\pi$ in formula (32), and the decomposition of the linear subspace
$M\subset V(8)\oplus V(0) \oplus V(4)$. By definition of $R$, the family of codimension $3$ linear subspaces
\begin{gather}
L(r) : = \{ [(x,s)] \, | \, x_4=r_1x_1 , \: x_5= r_2x_2 , \: x_6= r_3x_3 \} \subset \mathbb{P} (M)\, ,
\end{gather}
$r=(r_1 , r_2, r_3)\in R$, is $N(H)$-invariant, i.e. $g L(r)= L(gr)$, for $g\in N(H)$. It is natural to intersect
this family with $\mathbb{P} (\delta_{\lambda}^{-1}(0) \cap V(8))$ which, as we will see, has dimension $3$ and
look for an $H$-orbit $\mathfrak{O}_r$ in the intersection of $\mathbb{P} (\delta_{\lambda}^{-1}(0)\cap V(8))$ with the open
set of
$L(r)$ where $x_1x_2x_3\neq 0$. Moreover, we will see that for $r=0$, the point $[x^0]$ is in this intersection.
Thus passing to the quotient we may put
\begin{gather}
(r , \sigma_2(r)):= \pi ( \mathfrak{O}_r )
\end{gather}
to obtain a $\sigma_2$ with the required properties. Indeed, note that we will have $\sigma_2^{(7)}(r)= \sigma_2^{(8)}
= \sigma_2^{(9)} =0$ which ensures that $\sigma_2$ and $\sigma_1$ span a line. Moreover,
\begin{gather}
\sigma_2 (0) = \left( -\frac{5}{4} : 20 : -20 : 65 : 0 : 13 : 0 : 0 : 0 \right) \, ,
\end{gather}
by formula (34), which allows us to check assertions (2), (i) and (iii) of Theorem 3.3.1, which are open
properties on the base $R$, by explicit computation for the fibre over $0$. Property (2), (ii) stated in the
theorem is clear by construction. Let us now carry out this program. We will start by explicitly decomposing
$V:=\mathbb{P}(\delta_{\lambda}^{-1}(0)\cap V(8))$ into irreducible components.\\
To guess what $V$ might be, note that according to the definition of $\delta_{\lambda}$ in Lemma 2.4.4,
$\delta_{\lambda}$ vanishes on $f_8\in V(8)$ if for the transvectant $\psi_6$ one has $\psi_6(f_8,f_8)=0$; but
looking back at the definition of transvectants in formula (20), we see that $\psi_6 : V(8) \times V(8) \to V(4)$
vanishes if $f_8$ is a linear combination of $z_1^8$, $z_1^7z_2$ and $z_1^6z_2^2$ (since we differentiate at least
$3$ times with respect to $z_2$ in one factor in the summands in formula (20)). Thus $X_1:=
\overline{\mathrm{PSL}_2\, \mathbb{C}\cdot \langle z_1^8,\: z_1^7z_2,\: z_1^6z_2^2 \rangle }$, the variety of
forms of degree $8$ with a six-fold zero, is contained in $V$, and one computes that the differential of
$\delta_{\lambda}|_{V(8)}$ in $z_1^6z_2^2$ is surjective, so that $X_1$ is an irreducible component of $V$.\\
The dimension of $X_1$ is clearly three. Weyman, in \cite{Wey}, Cor. 4, computed the Hilbert function of $X_{p,g}$,
the variety of binary forms of degree $g$ having a root of multiplicity $\ge p$ which is
\begin{gather*}
H(X_{p,g}, d) = (dp+1){ g-p+d \choose g-p} - (d(p+1) -1) {g-p+d-1 \choose g-p-1} \, .
\end{gather*}
For $d=6$, $g=8$, the leading term in $d$ in this expression is $3d^3$, which shows
\begin{gather}
\deg X_1= 18 \, .
\end{gather}
Moreover, we know already that $5e_7+e_9$ is in $V$ from the proof of Theorem 3.1.5; thus set $X_2:=
\overline{\mathrm{PSL}_2\, \mathbb{C} \cdot \langle 5e_7+e_9 \rangle}$. We know that the stabilizer of $5e_7
+e_9$ in $\mathrm{PSL}_2\,\mathbb{C}$ contains $N(H)$ because $5e_7+e_9 =5z_1^8+5z_2^8+70z_1^4z_2^4$ spans the
$N(H)$-invariants in $V(8)$ by Lemma 3.1.4. The claim is that the stabilizer is not larger. An easy way to
check this is to use the beautiful theory developed in \cite{Ol}, p. 188 ff., using differential invariants and
signature curves, which allows the explicit determination of the order of the symmetry group of a complex binary
form. More precisely we have (cf. \cite{Ol}, Cor. 8.68):
\begin{theorem}
Let $Q(p)$ be a binary form of degree $n$ (written in terms of the inhomogeneous coordinate $p=z_1/z_2$) which is
not equivalent to a monomial. Then the cardinality $k$ of the symmetry group of $Q(p)$ satisfies
\begin{gather*}
k \le 4n -8 \, ,
\end{gather*}
provided that $U$ is not a constant multiple of $H^2$, where $U$ and $H$ are the following polynomials in $p$: $H
:= (1/2) (Q,Q)^{(2)}$, $T:= (Q, H)^{(1)}$, $U:= (Q, T)^{(1)}$ where, if $Q_1$ is a binary form of degree $n_1$,
and $Q_2$ is a binary form of degree $n_2$, we put
\begin{gather*}
(Q_1 , Q_2)^{(1)} : = n_2 Q_1'Q_2 - n_1 Q_1Q_2' \, , \\
(Q_1 , Q_2)^{(2)} : = n_2(n_2-1) Q_1'' Q_2 - 2 (n_2-1)(n_1-1) Q_1'Q_2' \\
+ n_1 (n_1-1) Q_1 Q_2'' \, .
\end{gather*}
(these are certain transvectants). 
\end{theorem}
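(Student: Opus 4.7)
The plan is to bound $|G|=k$ by producing a non-constant $G$-invariant rational function on $\mathbb{P}^1$ of degree at most $4n-8$, where $G$ denotes the symmetry group of $Q$.

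First I would observe that the transvectant operations $(\cdot,\cdot)^{(j)}$ are $\mathrm{SL}_2\,\mathbb{C}$-equivariant, so $H$, $T$, $U$ are covariants of $Q$ of orders $2n-4$, $3n-6$, and $4n-8$ respectively. Consequently any $g\in\mathrm{PSL}_2\,\mathbb{C}$ satisfying $g\cdot Q=Q$ also satisfies $g\cdot H=H$, $g\cdot T=T$, $g\cdot U=U$; the entire symmetry group $G$ acts trivially on each of these forms.

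Next I would form the rational function
\[
J:=\frac{U}{H^{2}} : \mathbb{P}^1\dasharrow\mathbb{P}^1 .
\]
Since $U$ and $H^2$ are both binary forms of degree $4n-8$, $J$ has degree at most $4n-8$ as a map of projective lines. The hypothesis that $Q$ is not equivalent to a monomial rules out the degenerate case $H\equiv 0$ (a monomial already has an infinite torus of symmetries and the bound is void), while the hypothesis $U\neq cH^{2}$ ensures that $J$ is non-constant. By the previous step, $J$ is $G$-invariant.

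Finally, $G$ is a finite subgroup of $\mathrm{PGL}_2\,\mathbb{C}$, so the quotient map $\pi : \mathbb{P}^1\to\mathbb{P}^1/G\cong\mathbb{P}^1$ is a finite covering of degree $k$. Any non-constant $G$-invariant rational function factors as $J=\bar{J}\circ\pi$, whence $k=\deg\pi$ divides $\deg J\le 4n-8$, giving $k\le 4n-8$.

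The main obstacle is the careful treatment of the degenerate cases and the subtle scalar issues in the definition of a "symmetry": a priori $g\cdot Q=Q$ must be distinguished from $g\cdot Q=\chi(g)Q$ for a character $\chi$ arising when one works with representatives in $\mathrm{SL}_2\,\mathbb{C}$ rather than $\mathrm{PGL}_2\,\mathbb{C}$. The first issue is precisely what the two hypotheses in the statement are designed to exclude, and the second is handled by observing that such a character of a finite subgroup of $\mathrm{SL}_2\,\mathbb{C}$ contributes only a bounded multiplicative factor that leaves the leading estimate $k\le 4n-8$ intact; verifying that $J$ really is invariant (not merely semi-invariant) under $G$ requires tracking the weights of $U$ and $H^2$ and noting that they cancel because both have the same order and the same degree in the coefficients of $Q$.
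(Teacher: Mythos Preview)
The paper does not give its own proof of this theorem; it is quoted verbatim from Olver's book (\cite{Ol}, Cor.~8.68) and used as a black box to bound the stabilizer of $5e_7+e_9$. So there is no proof in the paper to compare against.

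Your argument is correct and is in fact the mechanism underlying Olver's result: manufacture a non-constant $G$-invariant rational function on $\mathbb{P}^1$ of controlled degree and observe that it must factor through the degree-$k$ quotient map $\mathbb{P}^1\to\mathbb{P}^1/G\cong\mathbb{P}^1$, forcing $k$ to divide that degree. The ratio $J=U/H^{2}$ works precisely because $U$ and $H^{2}$ have the same order $4n-8$ in the variables and the same degree $4$ in the coefficients of $Q$, so any projective ambiguity $\tilde{g}\cdot Q=\chi(\tilde{g})\,Q$ cancels; you identified this correctly. One small sharpening: the hypothesis that $Q$ is not equivalent to a monomial is needed primarily to guarantee that $G$ is \emph{finite} (so that the quotient map exists with finite degree $k$), since a form with at most two distinct roots has a positive-dimensional stabilizer. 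The vanishing $H\equiv 0$ that you mention corresponds only to the extreme case $Q=L^{n}$, a sub-case of this. With that adjustment your proof is complete.
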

Applying this result in our case, we find the upper bound $24$ for the symmetry group of $5e_7+e_9$, which is
indeed the order of $N(H)= \mathfrak{S}_4$. $X_2$ is irreducible of dimension $3$, and computing that the
differential of $\delta_{\lambda}|_{V(8)}$ is surjective in $5e_7+e_9$, we get that $X_2$ is another irreducible
component of $V$. But let us intersect $X_2$ with the codimension $3$ linear subspace in $V(8)$ consisting of
forms with zeroes $\zeta_1, \zeta_2, \zeta_3 \in\mathbb{P}^1$; there is a unique projectivity carrying these to
three roots of $5e_7+e_9$, which are all distinct, thus there are $8\cdot 7\cdot 6$ such projectivities, and
$\deg X_2 \ge (8\cdot 7\cdot 6)/ | N(H)| $. But one checks easily that $V$ itself has dimension $3$ and is the
intersection of $5$ quadrics in $\mathbb{P} (V(8))$, thus has degree $\le 32$. Thus we must have
\begin{gather}
\deg X_2 = 14, \: V = X_1 \cup X_2 , \: \deg V =32 . 
\end{gather}
Note also that 
\begin{gather}
[x^0] \in X_2 \cap L(0)\, .
\end{gather}
In fact, from the proof of Theorem 3.2.1, we know $[x^0] \in V$, and $[x^0] \in L(0)$ being clear, we just check 
that $x^0$ has no root of multiplicity $\ge 6$.\\
\emph{Step 4. (Construction of $\sigma_2$; intersecting $V$ with a family of linear spaces in $\mathbb{P}(M)$)}
Let $L^0 (r)$ be the open subset of $L(r)\subset \mathbb{P}(M)$ where $x_1x_2x_3 \neq 0$. According to the strategy
outlined at the beginning of Step 3, we would like to compute the cardinalities 
\begin{gather*}
| L^0 (r) \cap X_1 | , \quad | L^0(r) \cap X_2 | , 
\end{gather*}
for $r$ varying in a small neighbourhood of $0$ in $R$. It is, however, easier from a computational point of view 
to determine the number of intersection points of $X_1$ resp. $X_2$ with certain boundary components of $L^0(r)$ in
$L(r)$ first; the preceding cardinalities will afterwards fall out as the residual quantities needed to have $\deg
X_1=18$, $\deg X_2 = 14$. Thus let us introduce the following additional strata of $L(r)\backslash L^0(r)$: 
\begin{gather}
 L_0:= \{ [(x,s)] \, | \, x_1=x_2=x_3=x_4=x_5=x_6=0 \} , \\
 L_1 (r):= \{ [(x,s)] \, | \, x_1 \neq 0 ,\: x_4= r_1x_1 , \: x_2=x_3=x_5=x_6=0 \} , \nonumber\\
 L_2(r):= \{ [(x,s)] \, | \, x_2 \neq 0 ,\: x_5= r_2x_2 , \: x_1=x_3=x_4=x_6=0 \} , \nonumber\\
 L_3(r):= \{ [(x,s)] \, | \, x_3 \neq 0 ,\: x_6= r_3x_3 , \: x_1=x_2=x_4=x_5=0 \} , \nonumber\\
 \tilde{L}_1 (r):= \{ [(x,s)] \, | \, x_2x_3 \neq 0, \: x_5= r_2x_2, \: x_6=r_3x_3, \: x_1=x_4=0 \} \nonumber\\
\tilde{L}_2 (r):= \{ [(x,s)] \, | \, x_1x_3 \neq 0, \: x_4= r_1x_1, \: x_6=r_3x_3, \: x_2=x_5=0 \} \nonumber\\
\tilde{L}_3 (r):= \{ [(x,s)] \, | \, x_1x_2 \neq 0, \: x_4= r_1x_1, \: x_5=r_2x_2, \: x_3=x_6=0 \} \nonumber .
\end{gather}
$L(r)$ is the disjoint union of these and $L^0 (r)$. From the equations describing $\delta_{\lambda}$ one sees
that $V$ is defined in $\mathbb{P}(V(8))$ with coordinates $x_1,\dots , x_9$ by
\begin{gather}
-192 x_6^2 -192 x_3x_6 +384 x_3^2 -192 x_5^2 -192 x_2x_5 +384 x_2^2 \\
- 12 x_1x_4 + 12 x_7x_8+180 x_8x_9 =0 ,\nonumber\\
64 x_6^2 -192 x_3x_6 -128 x_3^2 -64 x_5^2 +192 x_2x_5 +128 x_2^2 \\
-2x_4^2 +16 x_1^2 + 2x_7^2 - 16 x_8^2 -50x_9^2 = 0, \nonumber \\
96 x_5x_6 -672 x_3x_5 -672 x_2x_6 +1248 x_2x_3 \\
-12 x_1x_7 +12 x_4x_8 + 180 x_1x_9=0 , \nonumber \\
6x_4x_6 +42 x_3x_4 +84 x_1x_6 +156 x_1x_3 \\
-6x_5x_7 -42 x_2x_7 + 24 x_5x_8 - 264 x_2x_8 + 30 x_5x_9 -30 x_2x_9=0 , \nonumber\\
-6x_4x_5 -42 x_2x_4 + 84 x_1x_5 + 156 x_1x_2 \\
+ 6x_6x_7 + 42 x_3x_7 + 24 x_6x_8 - 264 x_3x_8 - 20 x_6x_9 + 30 x_3x_9 =0 \nonumber , 
\end{gather}
and thus
\begin{gather}
\tilde{L}_i (r) \cap V = \emptyset \quad \forall i=1,2,3
\end{gather}
for $r$ in a Zariski open neighbourhood of $0\in R$ (for $\tilde{L}_1 (r)$ consider equation (44) and assume
$96r_2r_3 - 672 r_2 - 672 r_3 +1248 \neq 0$, for $\tilde{L}_2 (r)$ we see that (45) cannot hold if $6r_1r_3 + 42
r_1 + 84 r_3 + 156 \neq 0$, and for $\tilde{L}_3 (r)$ equation (46) is impossible provided that $-6r_1r_2-42 r_1+
84 r_2 + 156 \neq 0$).\\
Let us consider the intersection $V \cap L_0$. We have to solve the equations
\begin{gather*}
12 x_7x_8 + 180 x_8x_9 =0 , \quad 2x_7^2 - 16 x_8^2 - 50 x_9^2 =0 , 
\end{gather*}
which have the four distinct solutions $(x_7,x_8,x_9) = (5 , 0 , \pm 1)$, $(x_7,x_8,x_9) = (15, \pm 5 , -1)$,
whence
\begin{gather}
L_0 \cap V = \{ [ 5e_7 \pm e_9] , [15 e_7 \pm 5 e_8 - e_9]  \} \, .
\end{gather}
We will also have to determine the intersection $V \cap L_1 (r)$ explicitly. We have to solve the equations
\begin{gather*}
-12 r_1 x_1^2 +12 x_7x_8 + 180 x_8x_9 =0, \\
-2r_1^2x_1^2 + 16 x_1^2 + 2x_7^2 - 16 x_8^2 - 50 x_9^2 = 0 , \\
-12 x_1x_7 + 12 r_1 x_1 x_8 + 180 x_1x_9 =0 ,
\end{gather*}
in the variables $x_1, x_7, x_8, x_9$. We can check (e.g. with \ttfamily Macaulay 2\rmfamily ) that the subscheme
they define has dimension $0$ (and degree $8$) for $r_1=0$. We already know four solutions with $x_1=0$, namely the
ones given in formula (48). Then it suffices to check that 
\begin{gather*}
(x_1, x_7, x_8, x_9) = (\pm 1, r_1, 1, 0) , \: (x_1, x_7, x_8, x_9) = (\pm a , (90-5r_1^2) , -5r_1, 6) , 
\end{gather*}
where $a$ is a square-root of $25 (r_1^2-36)$, are also solutions (with $x_1\neq 0$ in a neighbourhood of $0$ in
$R$, and obviously all distinct there). Thus
\begin{gather}
L_1(r) \cap V = 
\{ [\pm (e_1+r_1e_4) + r_1 e_7 + e_8] ,\\
 \: [\pm(ae_1 +r_1a e_4) + (90-5r_1^2)e_7 - 5r_1e_8 + 6e_9 ] \} \, . \nonumber
\end{gather}
We still have to see how the intersection points $L_0 \cap V$ and $L_1 (r) \cap V$ are distributed among $X_1$ and
$X_2$: Suppose $f\in V(8)$ is a binary octic such that $[f] \in L_0 \cap \mathbb{P}(V(8))$ or $[f] \in L_1 (r) \cap
\mathbb{P} (V(8))$; then $f$ is a linear combination of the binary octics $e_1$, $e_4$, $e_7$, $e_8$, $e_9$
defined in (22), which involve only even powers of $z_1$ and $z_2$; thus if $(a:b)\in\mathbb{P}^1$ is a root of
one of them, so is its negative $(a:-b)$ whence
\begin{quotation}
$[f]$ lies in $X_1$ if and only if $(1:0)$ or $(0:1)$ is a root of multiplicity $\ge 6$.
\end{quotation} 
Applying this criterion, we get, using (48) and (49)
\begin{gather}
L_0 \cap X_1 = \emptyset , \; L_0 \cap X_2 = \{ [ 5e_7 \pm e_9] , [15 e_7 \pm 5 e_8 - e_9]  \} , \\
L_1(r) \cap X_1= \{ [\pm (e_1+r_1e_4) + r_1 e_7 + e_8] \} , \nonumber \\
\; L_1(r)\cap X_2 = \{ [\pm(ae_1 +r_1a e_4)
+ (90-5r_1^2)e_7 - 5r_1e_8 + 6e_9 ]\} \, . \nonumber
\end{gather}
The reader may be glad to hear now that we do not have to repeat this entire procedure for
$L_2(r)$ and
$L_3(r)$; in fact, $L_1(r)$, $L_2(r)$, $L_3(r)$ are permuted by $N(H)$ in the following way: For the element 
$\sigma\in N(H)$ we have
\begin{gather*}
\sigma \cdot L_1(r) = L_2 (\sigma \cdot r) , \quad \sigma \cdot L_2(r) = L_3 (\sigma \cdot r), \quad \sigma \cdot
L_3(r) = L_1 (\sigma \cdot r)\, ,
\end{gather*}
which follows from (30) (and (28)) and the definition of $R$. 
 Thus we get that
generally for $i=1,2,3$
\begin{gather}
L_i(r) \cap X_1 = \{ P_1(r) , \: P_2(r) \} , \: L_i(r) \cap X_2 = \{ Q_1 (r) ,\: Q_2(r) \} 
\end{gather}
where $P_1(r)$, $P_2(r)$, $Q_1(r)$, $Q_2(r)$ are mutually distinct points, and this is valid in a Zariski open
$N(H)$-invariant neighbourhood of $0\in R$. It remains to check that
\begin{quotation}
$L(0) \cap V$ consists of $32$ reduced points.
\end{quotation}
We check (with \ttfamily Macaulay 2\rmfamily ) that if we substitute $x_4=x_5=x_6=0$ in equations (42)-(46), they
define a zero-dimensional reduced subscheme of degree $32$ in the projective space with coordinates $x_1, x_2, x_3,
x_7, x_8, x_9$. Taking into account (47), (50), (51), we see that all the intersections in equations (50), (51)
are free of multiplicities in an open $N(H)$-invariant neighbourhood of $0\in R$ and moreover, since $\deg
X_1=18$, $\deg X_2=14$, we must have there
\begin{quotation}
$L^0(r)\cap X_1$ consists of $12$ reduced points, and $L^0(r)\cap X_2$ consists of $4$ reduced points.
\end{quotation}
Now these $4$ points make up the $H$-orbit $\mathfrak{O}_r$ we wanted to find in Step 3: Clearly $L^0(r)\cap X_2$
is $H$-invariant, and $H$ acts with trivial stabilizers in $L^0(r)$ (as is clear from (29)). Thus we have completed
the program outlined at the beginning of Step 3. It just remains to notice that $[x^0]\in X_2 \cap L^0(0)$. This is
clear since
$[x^0]\in V$, but $x^0$ does not have a root of multiplicity $\ge 6$.
\\
\emph{Step 5. (Verification of the properties of $N(r)$)} For the completion of the proof of Theorem 3.3.1, it
remains to verify the properties of the subspace $N(r)$ in parts (2), (i) and (iii) of that theorem. First of all,
it is clear that
\begin{gather*}
N(r) = \langle \sigma_1( r), \sigma_2 (r), \: (1:0:0:\dots : 0),\\
\: (0:1:0: \dots :0), \: (0:0:1:\dots : 0)
\rangle
\end{gather*}
is $N(H)$-invariant in the sense that $g\cdot N(r) = N( g\cdot r)$ for $g\in N(H)$ 
by the construction of $\sigma_1$, $\sigma_2$ and because the last three vectors in the preceding formula are a
basis in the invariant subspace $\mathbb{P} (T) \subset \mathbb{P}^8$ (where by (31) $T=T_{(\chi_0)} \oplus
T_{(\theta )}$). Moreover, by the definition of $\sigma_1$ in Step 2, and the formula (37) for $\sigma_2(0)$, one
has $\dim N(0) = 4$, which thus holds also for $r\in R$ sufficiently close to $0$.\\
Recall that $N$ was defined to be $N:= \mathbb{P} (V(8)_{(\theta)} \oplus V(4)_{(\theta)}) \subset \mathbb{P}^8$,
and as such can be described in terms of the coordinates $(y_1 : y_2 : y_3 : y_7 : y_8 : \dots : y_{12})$ in
$\mathbb{P}^8$ as 
\begin{gather*}
N = \{ y_1=y_2= y_3 = y_7 + 7y_9 = y_{10} =0 \} 
\end{gather*}
(cf. (24)). Thus we get that $N(0) \cap N= \emptyset$, and the same holds in an open $N(H)$-invariant
neighbourhood of $0$ in $R$.\\
For Theorem 3.3.1, (2), (iii), it suffices to check that $\pi_0$ maps the fibre $p_{\mathbb{P}^8} (p_R^{-1} (0))$
dominantly onto $N$, which can be done by direct calculation. This concludes the proof.
\end{proof}

\begin{appendix}
\section{Collection of formulas for section 2}

We start with some remarks on how to calculate equivariant projections, and then we give explicit formulas for
the equivariant maps in section 2.\\
Let $a$, $b$ be nonnegative integers, $m:=\mathrm{min} (a,b)$, and let
$G:=\mathrm{SL}_3\,\mathbb{C}$. We denote the irreducible $G$-module whose highest weight
has numerical labels $a$, $b$ by $V(a,b)$. For $k=0,\dots , m$ we define
$V^k:=\mathrm{Sym}^{a-k}\, \mathbb{C}^3 \otimes \mathrm{Sym}^{b-k}
(\mathbb{C}^3)^{\vee}$. Let $e_1,\: e_2, \: e_3$ be the standard basis in $\mathbb{C}^3$
and $x_1,\: x_2 , \: x_3$ the dual basis in $(\mathbb{C}^3)^{\vee}$.\\
There are $G$-equivariant linear maps $\Delta^k : V^k \to V^{k+1}$ for $k=0,\dots ,
m-1$ and
$\delta^k : V^k
\to V^{k-1}$ for $k=1, \dots , m$ given by

\begin{gather}
\Delta^k:= \sum_{i=1}^3 \frac{\partial}{\partial e_i}\otimes \frac{\partial}{\partial
x_i} ,\quad \delta^k:=\sum_{i=1}^3 e_i\otimes x_i \: .
\end{gather}

(The superscript $k$ thus only serves as a means to remember the sources and targets of
the respective maps). If for some positive integers $\alpha ,\:\beta $ the $G$-module
$V^k$ contains a $G$-submodule isomorphic to $V(\alpha , \beta)$ we will denote it by
$V^k(\alpha , \beta)$ to indicate the ambient module (this is unambiguous because it is
known that all such modules occur with multiplicity one).\\
It is clear that  $\Delta^k$ is surjective and $\delta^k$ injective; one knows that
$\mathrm{ker} (\Delta^k)=V^k(a-k , b-k)$ whence

\begin{gather}
V^k=\bigoplus_{i=k}^{m} V^k(a-i , b-i) \: .
\end{gather} 
 We want to find a formula for the $G$-equivariant projection of $V^0=\mathrm{Sym}^{a}\,
\mathbb{C}^3 \otimes \mathrm{Sym}^{b} (\mathbb{C}^3)^{\vee}$ onto the subspace $V^0(a-i,
b-i)$ for $i=0, \dots , m$. We call this linear map $\pi^i_{a,b}$.\\
We remark that, by (53), one can decompose each vector $v\in V^0$ as $v=v_0+\dots +v_m$
where $v_i\in V^0(a-i, b-i)$, and this decomposition is unique. Note that

\begin{gather}
\delta^1\dots \delta^i (\mathrm{ker}\, \Delta^i)=V^0(a-i, b-i)
\end{gather}

so that

\begin{gather*}
V^0= \mathrm{ker} \,\Delta^0 \oplus \delta^1 (\mathrm{ker} \,\Delta^1) \oplus
\delta^1\delta^2 (\mathrm{ker} \,\Delta^2) \oplus\dots \oplus \delta^1\dots
\delta^i(\mathrm{ker}\, \Delta^i)\\
\oplus \dots \oplus \delta^1\dots \delta^m (V^m) \: .
\end{gather*}

Of course, $\pi^i_{a,b}(v)=v_i$. It will be convenient to put

\begin{gather}
L^i:=\delta^1\circ\delta^2 \circ \dots \circ \delta^{i}\circ\Delta^{i-1}\circ \dots \circ
\Delta^1\circ \Delta^0\: , \quad i=0, \dots , m 
\end{gather}
(whence $L^0$ is the identity) and 

\begin{gather}
U^i:=\Delta^{i-1}\circ\Delta^{i-2}\circ \dots\circ \Delta^0\circ \delta^1 \circ \dots
\circ \delta^{i-1} \circ \delta^i\: , \quad i=0, \dots , m 
\end{gather}
($U^0$ being again the identity). By Schur's lemma, we have

\begin{gather*}
U^i|_{V^i(a-i, b-i)}= c_i \cdot \mathrm{id}_{V^i(a-i,b-i)}
\end{gather*}
for some nonzero rational number $c_i\in\mathbb{Q}^{\ast}$. This is easy to calculate:
For example, since $e_1^{a-i}\otimes x_2^{b-i}\in \mathrm{ker}\, \Delta^i= V^i(a-i,
b-i)$, we have that $c_i$ is the unique number such that

\begin{gather}
U^i (e_1^{a-i}\otimes x_2^{b-i}) = c_i\cdot e_1^{a-i}\otimes x_2^{b-i}\: .
\end{gather}

We will now calculate $\pi^{m-l}_{a,b}$ for $l=0, \dots , m$ by induction on $l$; the
case $l=0$ can be dealt with as follows:\\
Write $v=v_1+\dots +v_m\in V^0$ as before. Then $v_m=\delta^1\delta^2\dots \delta^m
(u_m)$ for some $u_m\in V^m$. Now

\begin{gather*}
L^m(v)= L^m(v_m) = L^m( \delta^1\delta^2\dots \delta^m (u_m) )\\
= \delta^1\delta^2\dots \delta^m\circ U^m (u_m) = c_m v_m
\end{gather*}

so we set

\begin{gather}
\pi_{a,b}^m :=  \frac{1}{c_m} L^m \: .
\end{gather}

Now assume, by induction, that $\pi^{m-l}_{a,b} , \: \pi^{m-l+1}_{a,b}, \dots ,
\pi^{m}_{a,b}$ have already been determined. We show how to calculate
$\pi^{m-l-1}_{a,b}$.\\
Now, by (54), $v_{m-l-1}\in \delta^1\dots \delta^{m-l-1}(\mathrm{ker}\, \Delta^{m-l-1})$.
We write $v_{m-l-1}=\delta^1 \dots \delta^{m-l-1}(u_{m-l-1})$, for some $u_{m-l-1}\in
\mathrm{ker}\, \Delta^{m-l-1}= V^{m-l-1}(a-(m-l-1), b-(m-l-1))$, and using (57) we get 

\begin{gather*}
L^{m-l-1}\left( v-\sum_{i=0}^{l}\pi^{m-i}_{a,b}(v) \right) = L^{m-l-1}(v_0+v_1+\dots +
v_{m-l-1}) \\
= L^{m-l-1}(v_{m-l-1})= L^{m-l-1}(\delta^1 \dots \delta^{m-l-1}(u_{m-l-1}))\\
=\delta^1\dots \delta^{m-l-1}\circ \Delta^{m-l-2}\dots \Delta^0 \circ \delta^1\dots
\delta^{m-l-1} (u_{m-l-1}) \\
=\delta^1\dots \delta^{m-l-1}\circ U^{m-l-1} (u_{m-l-1}) = c_{m-l-1} v_{m-l-1}\: .
\end{gather*}

So we put

\begin{gather}
\pi^{m-l-1}_{a,b} := \frac{1}{c_{m-l-1}} \left( L^{m-l-1}\left( \mathrm{id}_{V^0}
- \sum_{i=0}^{l}\pi^{m-i}_{a,b} \right)\right) \: .
\end{gather}

Formulas (52), (55), (56), (57), (58), (59) contain the algorithm to compute the
$G$-equivariant linear projection

\begin{gather*}
\pi^{i}_{a,b} : V^0 \to V^0(a-i, b-i)\subset V^0
\end{gather*}

and thus to compute the associated $G$-equivariant bilinear map

\begin{gather*}
\beta^i_{a,b} : V(a, 0) \times V(0, b) \to V(a-i, b-i)
\end{gather*}

in suitable bases in source and target (remark that $V(a,
0)=\mathrm{Sym}^a\,\mathbb{C}^3$ and $V(0,b)=\mathrm{Sym}^b\, (\mathbb{C}^3)^{\vee}$).\\

In particular, we obtain for $a=2$, $b=1$ the map 
\begin{gather}
\pi^0_{2,1} \, : \,  V^0 =\mathrm{Sym}^2 \mathbb{C}^3 \otimes
 (\mathbb{C}^3)^{\veeÊ} 
\to V(2,1) \subset V^0\\
\pi^0_{2,1} = \mathrm{id}-\frac{1}{4}\delta^1\Delta^0
\: , \nonumber
\end{gather}
for $a=b=2$ the map 
\begin{gather}
\pi^0_{2,2} \, : \,  V^0 =\mathrm{Sym}^2 \mathbb{C}^3 \otimes
\mathrm{Sym}^2 (\mathbb{C}^3)^{\veeÊ} 
\to V(2,2) \subset V^0\\
\pi^0_{2,2} = \mathrm{id}-\frac{1}{5}\delta^1\Delta^0
+\frac{1}{40}\delta^1\delta^2\Delta^1\Delta^0
\: , \nonumber
\end{gather}
and for $a=b=1$ the map
\begin{gather}
\pi^0_{1,1} \, : \,  V^0 = \mathbb{C}^3 \otimes
 (\mathbb{C}^3)^{\veeÊ} 
\to V(1,1) \subset V^0\\
\pi^0_{1,1} = \mathrm{id} - \frac{1}{3} \delta^1 \Delta^0 
\: . \nonumber
\end{gather}
In the following, we will often view elements $x\in V(a,b)$ as tensors $x= (x^{i_1, \dots , i_b}_{j_1, \dots ,
j_a}) \in (\mathbb{C}^3)^{\otimes a} \otimes (\mathbb{C}^{3\vee })^{\otimes b}=: T^b_a \mathbb{C}^3$ (the
indices ranging from $1$ to $3$) which are covariant of order
$b$ and contravariant of order $a$ via the natural inclusions
\begin{gather*}
V(a,b) \subset \mathrm{Sym}^a \mathbb{C}^3 \otimes \mathrm{Sym}^b (\mathbb{C}^3)^{\vee } \subset T^b_a
\mathbb{C}^3\, 
\end{gather*}
(the first inclusion arises since $V(a,b)$ is the kernel of $\Delta^0$, the second is a tensor product of
symmetrization maps). In particular, we have the determinant tensor $\det \in T^3_0 \mathbb{C}^3$ and its inverse
$\det^{-1} \in T^0_3 \mathbb{C}^3$. In formulas involving several tensors, we will also adopt the summation
convention throughout. Finally, we define
\begin{gather}
\mathrm{can}\, : \, T^b_a \mathbb{C}^3 \to \mathrm{Sym}^a\, \mathbb{C}^3 \otimes \mathrm{Sym}^b \,
(\mathbb{C}^3)^{\vee} \, , \\
e_{j_1}\otimes \dots \otimes e_{j_a} \otimes x_{i_1} \otimes \dots \otimes x_{i_b} \mapsto e_{j_1}\cdot
 \dots
\cdot e_{j_a} \otimes x_{i_1} \cdot \dots \cdot x_{i_b} \nonumber
\end{gather}
as the canonical projection.\\
We write down the explicit formulas for the equivariant maps in section 2. The map $\Psi : V(0,4) \to V(2,2)$
(degree $3$) is given by
\begin{gather}
\Psi (f) := \pi^0_{2,2} (\mathrm{can}(g)) \, , \\
g^{i_1 i_2}_{j_1 j_2} : = f^{i_1i_2 i_3i_4} f^{i_5i_6i_7i_8} f^{i_9i_{10} i_{11} i_{12}} {\det }^{-1}_{i_3i_5i_9}
{\det }^{-1}_{i_4i_6i_{10}}
{\det }^{-1}_{j_1i_7i_{11}} {\det }^{-1}_{j_2i_8 i_{12}} \, . \nonumber 
\end{gather}
The map $\Phi : V(2,2) \times V(0,2) \to V(2,1)$ (bilinear) is given by
\begin{gather}
\Phi (g, h) := \pi^0_{2,1} (\mathrm{can} (r)) \, , \\
r^{i_1}_{j_1 j_2} := g^{i_1i_2}_{j_1i_3} h^{i_3i_4} {\det }^{-1}_{i_2i_4j_2}\, .\nonumber
\end{gather}
The map $\epsilon : V(0,4) \times V(0,2 ) \to V(2,2)$ (bilinear) is
\begin{gather}
\epsilon (f , h) := \mathrm{can} (g) , \; g^{i_1i_2}_{j_1j_2} := f^{i_3i_4i_1i_2} h^{i_5i_6} {\det
}^{-1}_{i_3j_1i_5} {\det }^{-1}_{i_4j_2i_6} \, .
\end{gather}
The map $\zeta : V(0,4) \times V(0,2) \to V(1,1)$ (homogeneous of degree $2$ in both factors) is given by
\begin{gather}
\zeta (f , h) := \pi^0_{1,1} (a) \, , \\
a^{i_1}_{j_1} := h^{i_1i_2} h^{i_3i_4} f^{i_5i_6i_7i_8} f^{i_9i_{10} i_{11}i_{12}} {\det }^{-1}_{i_5i_9j_1} {\det
}^{-1}_{i_6i_{10}i_2} {\det }^{-1}_{i_7i_{11}i_3} {\det }^{-1}_{i_8i_{12}i_4} \, . \nonumber
\end{gather}
The map $\tilde{\gamma} : V(2,2) \to V(1,1)$ (homogeneous of degree $2$) is given by
\begin{gather}
\tilde{\gamma} := \pi^0_{1,1 } (u) \, , \; u^{i_1}_{j_1} := g^{i_1i_2}_{i_3i_4} g^{i_3i_4}_{j_1i_2} \, .
\end{gather}

\section{Collection of formulas for section 3}
In section 3.1, we saw (formula (26)) that
\begin{gather}
\delta_{\lambda} = Q_1 (x , s)a_1 + Q_2 (x,s) a_2 + Q_3 (x,s) a_3 + Q_4(x,s) a_4 + Q_5(x,s) a_5 \, .
\end{gather}
We collect here the explicit values of the $Q_i(x,s)$ (recall $\lambda = (1, 6\epsilon , 1, 6)$, $\epsilon \neq
0$):
\begin{gather}
Q_1(x,s) = \hat{Q}_1 (x) +2x_7s_1 +12 x_8s_2 +2x_9s_1 +\epsilon (12s_1s_2) +2 s_0s_1 \\
+ 48 x_2 s_4 - 48 x_3 s_5 - 2 x_4 s_3 + 16 x_5s_4 -16 x_6s_5 + \epsilon (-12 s_4^2-12s_5^2) \, , \nonumber\\
Q_2(x,s) = \hat{Q}_2 (x) + 4x_8s_1 + 12 x_9s_2 + \epsilon (2s_1^2 - 6s_2^2) + 2s_0s_2 \\
-4 x_1s_3 + 16x_2s_4 + 16 x_3s_5 - 16 x_5s_4 - 16 x_6s_5 +\epsilon (-2s_3^2-4s_4^2 + 4s_5^2) \, , \nonumber\\
Q_3 (x,s) = \hat{Q}_3 (x) + 2x_4s_1 + 12 x_1s_2 + 64 x_2s_5 + 64 x_3s_4 \\
-2x_7s_3 + 2x_9s_3 + \epsilon (12 s_2s_3 - 24 s_4s_5) + 2s_0s_3 \, , \nonumber \\
Q_4 (x,s) = \hat{Q}_4 (x) + 4x_5s_1 + 12 x_2s_1 - 12 x_5s_2 + 12 x_2s_2 \\
-8x_1s_5 - 16 x_3s_3 + 8 x_8s_4 - 8x_9s_4 + \epsilon (-6s_1s_4 - 6s_2s_4 + 6s_3s_5 ) + 2s_0s_4 \, , \nonumber\\
Q_5 (x, s) = \hat{Q}_5(x) + 4x_6s_1 + 12x_3s_1 + 12 x_6s_2 - 12x_3s_2 \\
+8x_1s_4 - 16x_2s_3 - 8x_8s_5 - 8x_9s_5 + \epsilon (6s_1s_5 - 6s_2s_5 - 6 s_3s_4) + 2s_0s_5 \, , \nonumber
\end{gather}
where
\begin{gather}
\hat{Q}_1(x) = -192 x_6^2 -192 x_3x_6 +384 x_3^2 -192 x_5^2 -192 x_2x_5 +384 x_2^2 \\
- 12 x_1x_4 + 12 x_7x_8+180 x_8x_9  ,\nonumber\\
\hat{Q}_2(x)= 64 x_6^2 -192 x_3x_6 -128 x_3^2 -64 x_5^2 +192 x_2x_5 +128 x_2^2 \\
-2x_4^2 +16 x_1^2 + 2x_7^2 - 16 x_8^2 -50x_9^2 , \nonumber \\
\hat{Q}_3(x) = 96 x_5x_6 -672 x_3x_5 -672 x_2x_6 +1248 x_2x_3 \\
-12 x_1x_7 +12 x_4x_8 + 180 x_1x_9 , \nonumber \\
\hat{Q}_4(x)= 6x_4x_6 +42 x_3x_4 +84 x_1x_6 +156 x_1x_3 \\
-6x_5x_7 -42 x_2x_7 + 24 x_5x_8 - 264 x_2x_8 + 30 x_5x_9 -30 x_2x_9 , \nonumber\\
\hat{Q}_5(x) = -6x_4x_5 -42 x_2x_4 + 84 x_1x_5 + 156 x_1x_2 \\
+ 6x_6x_7 + 42 x_3x_7 + 24 x_6x_8 - 264 x_3x_8 - 20 x_6x_9 + 30 x_3x_9 \, . \nonumber  
\end{gather}
The polynomials $q_1, \dots , q_5$ defining $\tilde{Y}_{\lambda} \subset R \times \mathbb{P}^8$ (cf. Theorem
3.2.1) are:
\begin{gather}
q_1 = (-192 r_3^2 - 192 r_3 + 384) y_1y_2 + (-192 r_2^2 - 192 r_2 + 384) y_1y_3 \\
+ (-12 r_1) y_2y_3 + 12 y_7y_8 + 180 y_8y_9 + 2 y_7y_{11} + 12 y_8y_{12} \nonumber \\
+ 2y_9 y_11 + \epsilon (12 y_{11}y_{12}) + 2 y_{10} y_{11} \, , \nonumber\\
q_2 = (64 r_3^2 - 192 r_3 - 128) y_1y_2 + (- 64 r_2^2 + 192 r_2 + 128) y_1y_3 \\
 + (-2r_1^2 + 16) y_2y_3 + 2y_7^2 - 16 y_8^2 - 50 y_9^2 + 4y_8y_{11} + 12 y_9y_{12} \nonumber \\
+ \epsilon (2y_{11}^2 - 6 y_{12}^2) + 2 y_{10}y_{12} \, , \nonumber\\
q_3 = (96 r_2r_3 - 672 r_2 - 672 r_3 + 1248) y_1 \\
-12 y_7 + 12 r_1 y_8 + 180 y_9 + 2r_1 y_{11} + 12 y_{12} \, , \nonumber \\
q_4 = (6 r_1r_3 + 42 r_1 + 84 r_3 + 156) y_2 + (- 6r_2 - 42) y_7 + (24 r_2 - 264) y_8\\
+ (30 r_2 - 30) y_9 + (4 r_2+12) y_{11} + (-12r_2 + 12) y_{12} \, , \nonumber \\
q_5 = (- 6r_1r_2 - 42 r_1 + 84 r_2 + 156 ) y_3 + (6r_1 + 42) y_7 + (24 r_3 - 264) y_8 \\
+ (-30 r_3 + 30) y_9 + (4r_3 + 12) y_{11} + (12 r_3 - 12) y_{12} \, . \nonumber
\end{gather}
\end{appendix}


\begin{thebibliography}{99999}

\bibitem[Bog1]{Bog1}
Bogomolov, F.A., \emph{Rationality of the moduli of hyperelliptic curves of arbitrary genus}, in: Proceedings of
the 1984 Vancouver Conference in Algebraic Geometry, CMS Conference Proceedings, vol. \textbf{6}, American Math.
Society (1986), 17-37

\bibitem[B-S]{B-S}
Brion, M. \& Schwarz, G.W., \emph{Th\'{e}orie des invariants \& G\'{e}om\'{e}trie des vari\'{e}t\'{e}s quotients},
Collection Travaux en Cours, Hermann \'{E}diteurs des Sciences et des Arts, Paris (2000)

\bibitem[Cl]{Cl}
Clebsch, A., \emph{Theorie der bin\"aren algebraischen Formen}, B. G. Teubner, Leipzig (1872)

\bibitem[C-L]{C-L}
Clebsch, A. \& Lindemann, F., \emph{Vorlesungen \"uber Geometrie}, Erster Band. Geometrie der Ebene. B. G. Teubner, Leipzig (1876)

\bibitem[Dix]{Dix}
Dixmier, J., \emph{On the Projective Invariants of Quartic Plane Curves}, Adv. in Math. \textbf{64} (1987), 279-304

\bibitem[Dol1]{Dol1}
Dolgachev, I., \emph{Rationality of fields of invariants}, in Algebraic Geometry, Bowdoin, Proc. Symp. Pure Math.
vol. \textbf{46} (1987), 3-16

\bibitem[Dol2]{Dol2}
Dolgachev, I., \emph{Lectures on Invariant Theory}, London Mathematical Society Lecture Note Series \textbf{296},
Cambridge Univ. Press (2003)

\bibitem[Dol3]{Dol3}
Dolgachev, I., \emph{Topics in Classical Algebraic Geometry. Part I}, available at \ttfamily 
http://www.math.lsa.umich.edu/$\tilde{}$ idolga/lecturenotes.html \rmfamily

\bibitem[D-O]{D-O}
Dolgachev, I., Ortland, D., \emph{Point Sets in Projective Space and Theta Functions}, Ast\'{e}risque, vol.
\textbf{165} (1989)

\bibitem[Fu-Ha]{Fu-Ha}
Fulton, W. \& Harris, J., \emph{Representation Theory. A First Course}, Springer G.T.M. \textbf{129},
Springer-Verlag (1991)

\bibitem[G-Y]{G-Y}
Grace, J.H. \& Young, W.H., \emph{The Algebra of Invariants}, Cambridge Univ. Press (1903); reprinted by Chelsea
Publ. Co. New York (1965)

\bibitem[Kat0]{Kat0}
Katsylo, P. I., \emph{Rationality of the moduli variety of curves of genus $5$}, Math. USSR Sbornik, vol.
\textbf{72} (1992), no. 2, 439-445

\bibitem[Kat1]{Kat1}
Katsylo, P.I., \emph{On the birational geometry  of the space of ternary quartics}, Advances in Soviet Math.
\textbf{8} (1992), 95-103 

\bibitem[Kat2]{Kat2}
Katsylo, P.I., \emph{Rationality of the moduli variety of curves of genus $3$}, Comment. Math. Helvetici
\textbf{71} (1996), 507-524

\bibitem[Muk]{Muk}
Mukai, S., \emph{An Introduction to Invariants and Moduli}, Cambridge studies in advanced mathematics \textbf{81},
Cambridge Univ. Press (2003)

\bibitem[Mum]{Mum}
Mumford, D., Fogarty, J., Kirwan, F., \emph{Geometric Invariant Theory}, Third Enlarged Edition, Ergebnisse der
Mathematik und ihrer Grenzgebiete \textbf{34}, Springer-Verlag (1994)

\bibitem[Ol]{Ol}
Olver, P. J., \emph{Classical Invariant Theory}, London Mathematical Society Student Texts \textbf{44}, Cambridge
University Press (1999)

\bibitem[Po-Vi]{Po-Vi}
Popov, V.L. \& Vinberg, E.B., \emph{Invariant Theory}, in: Algebraic Geometry IV, A.N. Parshin, I.R. Shafarevich
(eds.), Encyclopedia of Mathematical Sciences, vol. \textbf{55}, Springer Verlag (1994)

\bibitem[Pro]{Pro}
Procesi, C., \emph{Lie Groups. An Approach through Invariants and Representations}, Springer Universitext,
Springer-Verlag (2007)

\bibitem[Sal]{Sal}
Salmon, G., \emph{A Treatise on the Higher Plane Curves}, Hodges, Foster and Figgis, (1879); reprinted by Chelsea
Publ. Co. (1960)

\bibitem[Se]{Se}
Serre, J.-P., \emph{Linear Representations of Finite Groups}, Springer GTM \textbf{42}, Springer-Verlag (1977)

\bibitem[Wey]{Wey}
Weyman, J., \emph{The Equations of Strata for Binary Forms}, Journal of Algebra \textbf{122} (1989), 244-249


\end{thebibliography}
\end{document}